\newcommand{\vep}{\varepsilon}
\newcommand{\R}{\mathbb{R}}
\newcommand{\bbr}{\mathbb{R}}
\newcommand{\vague}{\stackrel{v}{\rightarrow}}
\newcommand{\LFe}{F_{e(X)}}
\newcommand{\cov}{\mathrm{cov}}
\newcommand{\one}{\mathbf{1}}
\newcommand{\argmin}{\mathrm{argmin}}
\newcommand{\E}{\mathbb{E}}
\newcommand{\var}{\mathrm{Var}}
\theoremstyle{plain}
\newtheorem{theorem}{Theorem}[section]
\newtheorem{lemma}[theorem]{Lemma}
\newtheorem{proposition}[theorem]{Proposition}
\newtheorem{corollary}[theorem]{Corollary}
\theoremstyle{definition}
\newtheorem{remark}[theorem]{Remark}
\newtheorem{example}[theorem]{Example}
\newtheorem{assumption}[theorem]{Assumption}
\newcommand{\bbP}{\mathbb{P}}
\newcommand{\bx}{\mathbf{x}}
\newcommand{\cip}{\overset{P}\to}
\definecolor{Blue}{rgb}{0,0,1}
\definecolor{red}{rgb}{1,0,0}
\title{Estimating quantile treatments  without strict overlap}
\author{Marco Avella Medina}
 \address{Department of Statistics \\
 	Columbia University}
 \email{marco.avella@columbia.edu}
 \author{Richard A. Davis}
 \address{Department of Statistics \\
 	Columbia University}
 \email{rdavis@stat.columbia.edu}
 \author{Gennady Samorodnitsky}
 \address{School of Operations Research and Information Engineering\\
 	Cornell University}
 \email{gs18@cornell.edu}
 	\thanks{This research was partially
 	supported by  NSF grants DMS-2310973 (Avella Medina and Davis) at Columbia and DMS- 23109734 (Samorodnitsky) at Cornell. }
 \keywords{Causal inference,  heavy tails, inverse probability weighting, overlap, quantile treatments, regular variation}%
\begin{document}

\maketitle

\begin{abstract}
We consider the problem of estimating quantile treatment effects without assuming strict overlap , i.e., we do not assume that the propensity score is bounded away from zero. More specifically, we consider an inverse probability weighting (IPW) approach for estimating quantiles in the potential outcomes framework and pay special attention to scenarios where the propensity scores can tend to zero as a regularly varying function. Our approach effectively considers a heavy-tailed objective function for estimating the quantile process. We introduce a truncated  IPW estimator that is shown to outperform the standard quantile IPW estimator when strict overlap does not hold. We show that the limiting distribution of the estimated quantile process follows an infinitely divisible law  and converges at the  rate $n^{1-1/\gamma}$, where $\gamma>1$ is the tail index of the propensity scores when they tend to zero.  We propose a practical, data‑driven procedure for selecting the truncation parameter, grounded in our asymptotic theory. The performance of our estimators is illustrated in numerical experiments and in a dataset that exhibits the  presence of extreme propensity scores.

\end{abstract}

\section{Introduction}

Inverse probability weighting (IPW) estimators have become ubiquitous in the theory and practice of causal inference with observational data since the seminal work of \cite{rubin1974,rosenbaum:rubin1983,robins:rotnitzky:zhao1994}. These methods work well under assumptions of unconfoundedness and overlap. The former requires the treatment assignments mechanism to only depend on observed covariates. The latter, also known as positivity or common support,  requires all units to have a non-zero propensity score i.e. a non-zero probability of being assigned to each treatment condition. In fact a stronger condition called strict overlap is frequently assumed in the literature. It requires the propensity score to be bounded away from $0$ and $1$. Its popularity stems from the fact that it is a  necessary condition for the existence of  semiparametric estimators of the average treatment effect that are uniformly $\sqrt{n}$-consistent over a nonparametric class of models \cite{khan:tamer2010}. 

At a high level the main conceptual contribution of our work consist of showing how ideas from extreme value theory can be leveraged to  construct  IPW-type estimators that work well under relaxed overlap conditions.  In particular, we focus on estimating quantile treatment effects for fixed quantiles $\tau\in(0,1)$ and intermediate quantiles that change as the sample size $n$ increases as $\tau=\tau_n\to 0$ and $\tau_nn\to0$ as $n\to 0$. We relax the quantile IPW estimation frameworks of \cite{firpo2007} and \cite{zhang2018} as we do not assume strict overlap but an alternative regular variation condition of the distribution of the propensity score near $0$. This assumption effectively allows the summands of the IPW quantile objective function to be heavy-tailed which in turn connects this work to the extreme value literature. Our approach can be viewed as an extension of the mean IPW estimators of \cite{ma:wang2020} and \cite{heiler:kazak2021} which also allowed for extreme propensity scores. Our proposal is closer to \cite{heiler:kazak2021} since our quantile estimator is a stabilized IPW estimator similar to the one they considered for mean estimation. However, our study of quantile  estimators lead to the following main  four methodological and theoretical contributions.

\begin{enumerate}[(i)]
    \item
    \textbf{Quantile process estimation:} we consider the problem of jointly estimating  quantile treat effects for a sequence of quantile levels $\{\tau_j\}_{j=1}^m\in(\eta,1-\eta)$ where  $m\in\mathbb{N}$ and  $\eta\in (0,1/2)$. We propose IPW-type estimators that behave well under a relaxation of  the strict overlap condition. In particular, we allow  the inverse propensity score to be heavy-tailed as  introduced in \cite{ma:wang2020,heiler:kazak2021}. We derive the joint asymptotic distribution of a finite collection of quantile effects, effectively extending the results of \cite{firpo2007} to multiple quantile effects while relaxing his strict overlap condition. We extend our analysis to intermediate (extremal) quantiles that change as the sample size $n$ increases, namely we consider the case where $\tau=\tau_n\to 0$ and $\tau_nn\to0$ as $n\to 0$. This analysis extends the intermediate quantile results of \cite{zhang2018}  to the scenario where there are extreme inverse propensity scores. 
    \item 
    
    \textbf{Truncation:} we propose a new mean stabilized IPW estimator that uses truncated propensity scores as an alternative to the stabilized IPW considered in \cite{heiler:kazak2021}. This allows to trade-off bias and variance, and leads to an improved mean squared error, especially in the very heavy-tailed regimes. This truncation approach is somehow related to the trimming idea considered in \cite{ma:wang2020}  for their mean IPW estimator. The inherent reliability of our stabilized IPW estimator and the ``soft-rejection'' nature of truncation, as opposed to trimming observations associated with small propensity scores, lead to a better MSE. 
    
    \item 
    \textbf{Convex objectives with stable limit:} our proof techniques depart from the direct analysis of the  closed form mean estimators proposed in \cite{ma:wang2020,heiler:kazak2021}. Our work extends the previously known limiting distribution results for quantile estimators  of \cite{firpo2007,zhang2018} to the heavy-tailed case where the limit is a stable distribution. The core idea of our analysis is to characterize the limiting distribution of the empirical process corresponding to the convex objective function that defines our estimator.  By convexity the limiting distribution of our estimator is the distribution of the argmin of the limiting distribution of this empirical process. Similar ideas have been used by \cite{davis1992m, knight2002limiting, goh2009nonstandard} in the context of parameter estimation of heavy-tailed processes and quantile estimation. 

    \item 
     \textbf{Data-driven selection of the truncation parameter:} we propose a practical, data‑driven procedure for selecting the truncation parameter, grounded in our asymptotic theory. The asymptotic variance of the truncated IPW estimator admits an explicit representation in terms of a parameter $\theta$, which is directly linked to the truncation level. By estimating this function via  regression and applying bootstrapping methods, we obtain nearly unbiased estimates across a range of truncation levels. We then choose the truncation level that minimizes the mean squared error of these nearly unbiased estimates. The full procedure, along with an empirical assessment of its performance, is presented in Section \ref{sec:numexp}.  
\end{enumerate}

\subsection{Related literature}

There is a very large body of literature studying the problem of causal inference with the potential outcomes framework. Representative books of this literature include \cite{imbens:rubin2015,rosenbaum2017,ding2024}. The assumptions of unconfoundedness and overlap are closely are closely tied to the use of IPW estimators for causal inference. This goes back to the seminal papers of \cite{rubin1974, rosenbaum:rubin1983,robins:rotnitzky:zhao1994}. There is a vast literature discussing various issues related to the importance  of overlap in observational studies and we just limit ourselves with a modest overview of some of those lines of work. Several authors have dealt with the important practical concern of limited overlap  by recurring to trimming observations that have estimated propensity scores that are either too small or too large \cite{dehejia:wahba1999,crump:hotz:imbens:mitnik2009,busso:dinardo:mccrary2014,yang:ding2018, matsouaka:zhou2024,liu:li:zhou:matsouaka2024}. Others have shown the importance of overlap  in the rates of convergence attainable by semiparametric estimators \cite{khan:tamer2010,ma:wang2020,armstrong:kolesar2021,khan:nekipelov2022}. \cite{damour:ding:feller:lei:jasjeet2021} pointed out that recent efforts to incorporate machine learning ideas to incorporate covariate adjustments for enhanced nonparametric flexibility, such approaches also lead to poor overlap.

Quantile treatment effects became  particularly popular in econometrics due to their ability to capture the heterogeneous effect of a   treatments in  a population \cite{abadie:angrist:imbens2002,chernozhukov:hansen2006,firpo2007}. In many applications extreme quantiles are of interest \cite{chernozhukov2005,zhang2018}. Examples can include measuring the effect of smoking on low birth weights \cite{abrevaya:dahl2008}, survival analysis at extreme durations \cite{koenker:geling2001}, high risk in finance and high risk in finance \cite{tsay2005}. The study of extreme quantiles requires special care since standard theory for quantiles does not hold in such scenarios \cite{chernozhukov2005,wang:li:he2012,wang:li2013, zhang2018,deuber:lo:engelke:maathuis2023}.

\section{Notation and background}
\label{sec:notation} 

We consider a standard potential outcome framework where $D\in\{0,1\}$ is a binary treatment indicator and $Y\in\mathbb{R}$  is the response variable of interest.  We denote by $Y(1)$ and $Y(0)$ the potential outcomes of $Y$ under treatment $D=1$ and $D=0$ respectively, and assume that we only observe $D$ as well as the response $Y=DY(1)+(1-D)Y(0)$. In this context a typical population parameter of interest is the average treatment effect $\mathbb{E}[Y(1)]-\mathbb{E}[Y(0)]$. In this work, we
 focus instead in the study on the problem of estimating the causal effects of a binary choice $D=1$ vs. $D=0$ on quantiles of $Y$. More specifically, we consider the quantile treatment effects (QTE) defined as
 \begin{equation*}
     \label{eq:QTE}
     \delta(\tau):=q_1(\tau)-q_0(\tau),
 \end{equation*}
where $\tau\in(0,1)$ and $q_j(\tau):=\inf\{y\in \mathbb{R}\,:\,F_j(y)\geq\tau\}$ denotes the $\tau$-quantile of the potential outcome $Y(j)$ and $F_j(y)$ denotes its cumulative distribution function. We will be interested both in the problem of  estimating  the QTE corresponding to fixed quantiles $\tau$ as well as in the case where $\tau$ is close to $0$.

We start by recalling how IPW-type estimators work in the more conventional situation of estimating and comparing the mean responses, when the propensity scores are relatively likely to take values close to 0 and/or to 1.  
The usual assumption is that of a random sample $\bigl( (Y_i(1),Y_i(0)),
D_i, X_i), \, i=1,2,\ldots\bigr)$ of i.i.d. vectors with the response $Y_1\in\mathbb{R}$, the covariate vector $X_1\in\mathbb{R}^d$ and  the treatment indicator $D_1\in\{0,1\}$. The propensity score is defined as 
$e(X_1)=  \bbP(D_1=1|X_1)$, takes values in $(0,1)$ and its law is denoted by $\LFe$. In most cases in the distributional assumptions on the sample the covariate vector $X_1$ is replaced by the corresponding propensity score $e(X_1)$ as we indicate below. 
 
\begin{assumption}[Unconfoundedness]\label{ass:unconfoundedness}
    The variables $D_1$ and $(Y_1(1),Y_1(0))$ are conditionally independent
given $e(X_1)$. 
\end{assumption}

\begin{assumption}[Regular variation]\label{ass:RV_prop_score}
    For some $\gamma_1>1$, the distribution of the propensity score has regularly varying tails with index $\gamma_1-1$ at zero, i.e.,
    \begin{equation*}
        \lim_{t\to 0^+}\frac{\bbP(e(X_1)\leq t x)}{\bbP(e(X_1)\leq t)}=x^{\gamma_1-1}, \quad \forall x>0.
    \end{equation*}
\end{assumption}
In many cases we also impose a similar assumption at 1, i.e.,
\begin{equation*}
        \lim_{t\to 0^+}\frac{\bbP(e(X_1)> 1-t x)}{\bbP(e(X_1)> 1-t)}=x^{\gamma_0-1}, \quad \forall x>0
    \end{equation*}
for some $\gamma_0>1$  that may differ from $\gamma_1$.

Assumption \ref{ass:RV_prop_score} removes the strong overlap condition which requires that 
$$\mathbb{P}(e(X_i)\in (\xi,1-\xi) )=1 \quad \mbox{ for some } \quad \xi>0.$$
The above setup is  similar to the one in \cite{ma:wang2020, heiler:kazak2021}. Both papers independently introduced  Assumption \ref{ass:RV_prop_score}. In \cite{ma:wang2020} the authors suggested a robust  IPW estimator of the mean $\theta=\mathbb{E} (Y_1)$  (where now $Y_i$ means  $Y_i(1)$), given by
\begin{equation} \label{e:ma.wang}
    \hat\theta_{n,b_n}^{\mathrm{IPW}}= \frac{1}{n}\sum_{i=1}^n \frac{D_iY_i}{\hat e(X_i)}\one (\hat e(X_i)>b_n)\,,
\end{equation}
with $\hat e(X_i)$ being an estimator of the propensity score $e(X_i)$, and $b_n$ a sequence of positive numbers converging to 0.  They show that, under certain conditions,  the robust IPW estimator, properly centered and scaled, has an infinitely divisible limiting distribution.  In the absence of thresholding, i.e., $b_n=0$, the limit has a stable distribution.  

\cite{heiler:kazak2021} also consider a condition related to Assumption \ref{ass:RV_prop_score} but propose instead to use the stabilized IPW estimator
\begin{equation*}
    \hat\theta_{n}^{\mathrm{SIPW}}= \left(\sum_{i=1}^n \frac{D_i}{\hat e(X_i)}\right)^{-1}\sum_{i=1}^n \frac{D_iY_i}{\hat e(X_i)}\,,
\end{equation*}
where $\hat e(X_i)$ is, again, an estimator of the propensity score $e(X_i)$. The authors showed that under certain additional assumptions   the stabilized IPW estimator has a stable limit, which is similar to the estimator of \cite{ma:wang2020} in the case of light (or altogether absent) thresholding. 
We note that the stabilized estimator of \cite{heiler:kazak2021} can be viewed as the minimizer of the weighted least squares objective function,
\begin{equation}
\label{eq:weighted_LS}
    \hat{\theta}^{\mathrm{SIPW}}_{n}= \mbox{argmin}_u \sum_{i=1}^n \frac{D_i}{\hat e(X_i)}(Y_i-u)^2\,.
\end{equation}

In this paper we introduce a robust IPW method of estimating quantiles. In a sense, this  can be viewed as an extension of the robust mean  IPW methods of \cite{ma:wang2020, heiler:kazak2021} to the problem of estimation of quantiles. It will 
involve a minimization problem as in \eqref{eq:weighted_LS}, but the quadratic function in \eqref{eq:weighted_LS} will be replaced by the check loss function, suitable for the quantile problem, as in  \cite{firpo2007,zhang2018}.  

Our approach involves another step, which differs from the estimator \eqref{e:ma.wang} even when estimating the mean in that we do not discard observations with propensity scores below the truncation level $b_n$. Indeed, in the context of mean estimation, our estimator amounts to solving the minimization problem 
\begin{equation*}
    \label{eq:trunc_SIPW}
    \tilde \theta_{n,b_n}= \mbox{argmin}_u \sum_{i=1}^n \frac{D_i}{\max\{\hat e(X_i),b_n\}}(Y_i-u)^2\,.
\end{equation*}

In Section \ref{sec:IPW-quantile}, we consider similar estimates for the $\tau$-quantile of the distribution for $Y_i$. Such an estimator can also be defined as the minimizer of an objective function suitably weighted by the truncated propensity scores.  We show that this robust estimate of the $\tau$-quantile, properly normalized, has a limiting distribution that is infinitely divisible.  This infinitely divisible distribution  depends on a parameter $\theta$  related to the level of truncation with $\theta=0$ corresponding to no truncation and $\theta>0$ corresponding to an increasing level of truncation.  If  $\gamma_1\in(1,2)$, then the limit distribution is $\gamma_1$-stable if $\theta=0$ and non-stable infinitely divisible without a Gaussian component if $\theta>0$.  If $\gamma_1>2$, then the limit is Gaussian.

In Section \ref{sec:intermediate}, similar results are obtained for the robust IPW estimates of the intermediate $\tau$-quantiles in which the level of the quantile $\tau_n$ converges to zero with the sample size at rate $n\tau_n\to \infty$.  The proofs of the main results in Sections \ref{sec:IPW-quantile} and \ref{sec:intermediate} rely on a functional convergence of an objective function related to quantile estimation and whose argument corresponds to a rescaling of the parameter in a local neighborhood of the true quantile.  Since the limit process is quadratic in the argument,  the limit of the IPW quantile estimate follows directly from the continuous mapping theory, and is explicitly computable.    

The joint limit distribution of the IPW $\tau$-quantiles for treatments 0 and 1 is derived in Section \ref{sec:qte}.  Interestingly, as long as $\gamma_0$ and $\gamma_1$ are less than 2, these quantile estimates are asymptotically independent.  The estimate of the quantile treatment effect, defined as the difference in the two quantiles, follows immediately from the joint limit distribution.

  Section \ref{sec:numexp} contains simulation results illustrating  the behavior of the IPW estimates of quantiles for different choices of threshold $b_n$.  Section \ref{sec:empirical} applies the methodology established in earlier sections to the well studied National Supported Work program data.  Finally, all the proofs and technical arguments are provided in the Appendix.

\section{IPW estimation for fixed quantiles} \label{sec:IPW-quantile}

We use the same setup as in  \cite{ma:wang2020}. Let $\bigl( (e(X_i),
D_i, Y_i), \, i=1,2,\ldots\bigr)$ be i.i.d. random vectors such that
$e(X_1)$ takes values in $(0,1)$ and has the law $\LFe$, while $D_1$ takes
values in $\{0,1\}$. We further assume that unconfoundedness and regular variation (Assumptions \ref{ass:unconfoundedness}--\ref{ass:RV_prop_score}) hold and in this section also $Y_i$ means  $Y_i(1)$. 

For $0<\tau<1$ suppose that there is a unique number $q(\tau)\in\bbr$
such that
\begin{equation} \label{e:quant.def}
    \bbP(Y_1\leq q(\tau))=\tau.
\end{equation}
Then $q(\tau)$ is the $\tau$-quantile of the distribution of $Y_1$,
and the goal is to estimate $q(\tau)$  based on observations $\bigl( (e(X_i),
D_i, Y_i), \, i=1,2,\ldots, n\bigr)$. We assume that
\begin{eqnarray} \label{e:dens,tau}
& &\text{$Y_1$ has a density  $g_{Y}$ in a neighborhood of $q(\tau)$ and  $g_Y(q(\tau))>0$.} 
\end{eqnarray}
We will introduce additional assumptions on the conditional distributions of $Y_1$ given $e(X_1)$ in the sequel. We will often consider multiple quantiles at the same
time. When doing so, we will always assume that all the  assumptions
hold at each one of the quantiles considered.

Let $\{b_n\}$ be a positive sequence  of numbers converging to zero and for a fixed $\tau\in (0,1)$, consider the stochastic process  
\begin{equation} \label{e:X.n}
X_n(\tau;t) = \sum_{i=1}^n \frac{D_i}{\max(e(X_i),b_n)} \rho_\tau(Y_i-t),
 \ t\in\bbr,
\end{equation}
 where $\rho_\tau(y)$ is the ``check'' function (see \cite{koenker1982tests}),
\begin{equation*} \label{e:check.f}
  \rho_\tau(y)=y(\tau-\one(y\leq 0)), \ y\in\bbr.
\end{equation*}
We view the stochastic process \eqref{e:X.n} as an objective function, and define 
a robustified version of the IPW estimate of the quantile $q(\tau)$   by
\begin{equation} \label{e:argmin.n}
  \hat q_n(\tau) = {\rm argmin}_t \, X_n(\tau;t),
 \end{equation} 
which, in case the argmin is not unique, is defined as the leftmost value of the argmin. The objective here is to establish a limit theorem for the
difference $\hat q_n(\tau)-q(\tau)$, suitably normalized and to consider joint convergence for several different values of $\tau$.

In order to  normalize $\hat q_n(\tau)-q(\tau)$ properly so that the resulting sequence has a nondegenerate limit distribution, we introduce a  sequence of constants $\{h_n\}$.   In view of Assumption \ref{ass:RV_prop_score}, the function $w^{-1} \mathbb{P}\bigl( e(X_1)\leq w^{-1})$ is regularly varying at infinity with index $-\gamma_1$. We define $h_n$ to be the $1/n$ ``quantile'' of this function, i.e.,
\begin{equation} \label{e:h_n}
h_n=\inf\bigl\{ w>0:\, w^{-1} \bbP\bigl( e(X_1)\leq w^{-1})\leq
1/n\bigr\}, \ n=1,2,\ldots,
\end{equation}
and since inverses of regularly varying functions are also regularly varying, it follows that that $h_n$ is regularly varying with exponent
$1/\gamma_1$. That is $h_n=n^{1/\gamma_1}L(n)$ for some slowly varying function $L$. Our basic assumption for the threshold sequence $\{b_n\}$ is that there exists 
$\theta\in [0,\infty)$, such that 
\begin{equation} \label{e:moderate.ass}
    h_nb_n\to \theta \ \text{as} \ n\to\infty.
  \end{equation}
  For future reference we record here the straightforward limits 
from the definition of $h_n$, \eqref{e:moderate.ass}, and 
regular variation:
\begin{equation} \label{e:key.lim}
  \lim_{n\to\infty} nh_n^{-1}\bbP(e(X_1)\leq b_n) =\theta^{\gamma_1-1}, \ \ 
  \lim_{n\to\infty} nb_n\bbP(e(X_1)\leq b_n) =\theta^{\gamma_1}.   
\end{equation}
Note that the threshold $b_n$ is, approximately, $\theta/h_n$. This, of course, carries less information if $\theta=0$. The latter case can be thought of as  ``soft thresholding", and it requires somewhat different analysis in the sequel. In particular, it requires somewhat different assumptions, as we will see momentarily. 

We assume that there is a family 
${\mathcal L}(Y_1|e(X_1)=z), \, 0<z<1$
of regular conditional distributions of $Y_1$ given $e(X_1)$ such that
\begin{align} \label{e:small.p}
  &{\mathcal L}(Y_1|e(X_1)=z) \Rightarrow G_0 \ \text{in} \ \bbr, \ \text{as}
  \ z\to 0 \ \text{for some probability law $G_0$ on $\bbr$ }, \\
  \label{e:small.p1}
&\bbP \bigl( |Y_1- q(\tau)|\leq \vep \big| e(X_1)=z\bigr) \to 0 \
\text{as} \ \vep\to 0, \ \text{uniformly in $z$ near 0.}
\end{align}
 Here and elsewhere the double arrow $\Rightarrow$ denotes convergence in distribution. In the case $\theta=0$, we strengthen the assumption \eqref{e:small.p1} into 
\begin{align} \label{e:small.p-2}
  &\bbP \bigl( |Y_1- q(\tau)|\leq \vep \big| e(X_1)=z\bigr)=O(\varepsilon)
\ \text{as} \ \vep\to 0 \ \text{uniformly in $z$ near 0.}
\end{align}

A bit more formally, the   assumption \eqref{e:small.p1} 
means that
$$
\lim_{\vep\to 0, \, z\to 0} \, \sup_{z^\prime\leq z}\,
\bbP \bigl( |Y_1- q(\tau)|\leq \vep \big| e(X_1)=z^\prime\bigr) =0,
$$
with the appropriate modification for the assumption 
\eqref{e:small.p-2}.  The choice $b_n=0$ (no thresholding at all) is possible and, trivially, leads to $\theta=0$. In this special case the additional assumption  \eqref{e:small.p-2} is not needed.

The following theorem forms the main result of this section. 
 
 \begin{theorem} \label{th:Z1} Assume that  Assumptions \ref{ass:unconfoundedness} and \ref{ass:RV_prop_score} and conditions  
 \eqref{e:dens,tau}, \eqref{e:moderate.ass}, \eqref{e:small.p}
 and 
 \eqref{e:small.p1} (\eqref{e:small.p-2} if $\theta=0$) hold with $\gamma_1>1$, and for each $0<\tau<1$. Then for $\gamma_1\in (1,2)$ and with $\{h_n\}$ as specified in \eqref{e:h_n},  
\begin{equation} \label{e:key.fixed}
   \bigl[ (n/h_n)\bigl( \hat q_n(\tau) -q(\tau)\bigr), \,
   0<\tau<1\bigr] \Rightarrow \bigl( Z_\tau/g_Y(q(\tau)), \, 0<\tau<1\bigr)
   \end{equation}
   in finite-dimensional distributions. Here 
$\bigl(
   Z_\tau, \, 0<\tau<1\bigr)$ is an infinitely divisible
   stochastic process whose finite-dimensional distributions are
   specified in Proposition \ref{pr:z1}.   For $\gamma_1>2$ or $\gamma_1=2$ and $\E [1/e(X_1)]<\infty$, then \eqref{e:key.fixed} remains valid with $h_n=n^{1/2}$ and $\bigl(
   Z_\tau, \, 0<\tau<1\bigr)$  a Gaussian process as specified in Proposition \ref{pr:z1}, i.e.,
   \begin{equation*} \label{e:key.fixed2}
   \bigl[ n^{1/2}\bigl( \hat q_n(\tau) -q(\tau)\bigr), \,
   0<\tau<1\bigr] \Rightarrow \bigl( Z_\tau/g_Y(q(\tau)), \, 0<\tau<1\bigr).
   \end{equation*}
   
   \end{theorem}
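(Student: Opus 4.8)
The plan is to use the standard convexity argument for M-estimators with a heavy-tailed twist: since each map $t\mapsto X_n(\tau;t)$ in \eqref{e:X.n} is convex in $t$, it suffices to establish the finite-dimensional functional convergence of the \emph{recentered, rescaled} objective process and then invoke the convexity (argmin) lemma of \cite{davis1992m, knight2002limiting, goh2009nonstandard}. Concretely, write $u=(n/h_n)(t-q(\tau))$ and define the localized process
\begin{equation*}
V_n(\tau;u)=X_n\bigl(\tau; q(\tau)+u h_n/n\bigr)-X_n\bigl(\tau;q(\tau)\bigr).
\end{equation*}
The first step is to show that $\bigl(V_n(\tau;u)\bigr)$, jointly over finitely many values of $\tau$ and over $u$ in compacts, converges in distribution to a limiting process of the quadratic-plus-linear form $V(\tau;u)=\tfrac12 g_Y(q(\tau))u^2 + u\,W_\tau$ (up to a sign normalization), where $\bigl(W_\tau\bigr)$ is the infinitely divisible process of Proposition \ref{pr:z1}. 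Once this is in hand, the continuous mapping theorem applied to the argmin functional — which is continuous on the space of convex functions with a unique minimizer — gives $(n/h_n)(\hat q_n(\tau)-q(\tau))=\operatorname{argmin}_u V_n(\tau;u)\Rightarrow \operatorname{argmin}_u V(\tau;u)= -W_\tau/g_Y(q(\tau))$, which is the claimed form $Z_\tau/g_Y(q(\tau))$ after identifying $Z_\tau=-W_\tau$ and absorbing signs into the definition in Proposition \ref{pr:z1}.

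For the functional convergence itself I would split $V_n(\tau;u)$ into a deterministic drift term and a martingale-type fluctuation term using the Knight identity $\rho_\tau(y-s)-\rho_\tau(y)=-s(\tau-\one(y\le0))+\int_0^s(\one(y\le r)-\one(y\le0))\,dr$. Summing against the weights $D_i/\max(e(X_i),b_n)$: the integral part, after taking conditional expectations given $e(X_i)$ and using \eqref{e:dens,tau} together with the conditional-continuity assumptions \eqref{e:small.p}–\eqref{e:small.p1} (or \eqref{e:small.p-2} when $\theta=0$), yields the deterministic quadratic $\tfrac12 g_Y(q(\tau))u^2$; the key computation is that $\E\!\bigl[\tfrac{D_1}{\max(e(X_1),b_n)}\bigr]\to 1$ by unconfoundedness (the weight has conditional mean $e(X_1)/\max(e(X_1),b_n)\to 1$) and that the scaling $(h_n/n)\cdot(n/h_n)^2$ together with the local density expansion produces the stated coefficient. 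The linear part, $-u\,(n/h_n)^{-1}\sum_i \tfrac{D_i}{\max(e(X_i),b_n)}(\tau-\one(Y_i\le q(\tau)))$, is a row-sum of i.i.d.\ heavy-tailed summands: here Assumption \ref{ass:RV_prop_score}, the definition \eqref{e:h_n} of $h_n$, and the moderate-threshold condition \eqref{e:moderate.ass} via the limits \eqref{e:key.lim} put it exactly in the domain of attraction of the infinitely divisible law $W_\tau$ via classical triangular-array convergence (convergence of the Lévy measures to a measure with the $\gamma_1$-stable tail on the relevant half-line plus an atom/truncation effect governed by $\theta$). For $\gamma_1>2$ (or $\gamma_1=2$ with $\E[1/e(X_1)]<\infty$) the summands have finite variance, $h_n=n^{1/2}$, and the Lindeberg CLT gives the Gaussian $W_\tau$; the joint (multi-$\tau$) statement follows because the component summands are jointly in a multivariate domain of attraction, and the Cramér–Wold device reduces everything to the one-dimensional heavy-tailed CLT.

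The main obstacle will be making the quadratic-drift computation rigorous \emph{uniformly} in the heavy-tailed regime, i.e.\ controlling the error between the random integral term and its deterministic limit when the weights $D_i/\max(e(X_i),b_n)$ themselves have infinite variance (for $\gamma_1<2$). A naive second-moment bound fails, so one must argue that, although individual weights are large, the indicator factor $\one(Y_i\le q(\tau)+u h_n/n)-\one(Y_i\le q(\tau))$ is simultaneously small with high probability — precisely the content of \eqref{e:small.p1}/\eqref{e:small.p-2}, which force the conditional probability of $Y_1$ landing in the shrinking window to vanish (at rate $O(\vep)$ when $\theta>0$) uniformly over small propensity scores. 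The careful step is therefore a truncation of the weights at a level growing slower than $h_n$, a first-moment bound on the truncated part (using the $O(\vep)$ conditional-window estimate to beat the regularly varying weight tail), and a separate negligibility argument for the exceedances, of the kind standard in heavy-tailed M-estimation but requiring the interplay of \eqref{e:moderate.ass} and \eqref{e:key.lim} to be tracked precisely so that the $\theta$-dependent infinitely divisible limit (stable when $\theta=0$, tempered/truncated infinitely divisible when $\theta>0$) emerges with the correct parameters. Tightness in $u$ on compacts is then immediate from convexity (pointwise convergence of convex functions implies local uniform convergence), so no separate tightness argument for the process is needed beyond the finite-dimensional statement.
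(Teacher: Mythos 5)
Your proposal follows essentially the same route as the paper: localize via $u=(n/h_n)(t-q(\tau))$ (the paper uses the opposite sign convention), split the objective with the Knight identity into a linear heavy-tailed term — handled by triangular-array convergence to an infinitely divisible law as in Proposition \ref{pr:z1} — and an integral term converging in probability to $\tfrac{u^2}{2}g_Y(q(\tau))$ as in Proposition \ref{pr:z2}, then upgrade finite-dimensional convergence to weak convergence in $C(\R)$ by convexity and conclude with the argmin continuous-mapping lemma of \cite{davis1992m}. The only differences are technical: the paper controls the fluctuations of the integral term with the Marcinkiewicz--Zygmund inequality ($p=2$ when $\theta>0$, $p\in(1,\gamma_1)$ when $\theta=0$) rather than your truncate-the-weights scheme, and note that the $O(\vep)$ rate of \eqref{e:small.p-2} is what the $\theta=0$ case requires, not $\theta>0$ as your last paragraph states.
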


\begin{remark}
\label{rem:Gaussian_case}
       The limit process in the theorem is  $\gamma_1$-stable if $1<\gamma_1<2$ and $\theta=0$, and non-stable infinitely divisible without a Gaussian component if $1<\gamma_1<2$ and $\theta>0$.  The assumption $\gamma_1>2$ or $\gamma_1=2$ and $\E(1/e(X_1))<\infty$ imply that $\rm{var}(D_1/e(X_1)))<\infty$, which is needed for the Gaussian limit.  A more delicate calculation also shows that a Gaussian limit is obtained without requiring $\E[1/e(X_1)]<\infty$, but in this case $h_n=n^{1/2}L(n)$, where $L(n)$ is a slowly varying function converging to infinity. 
   \end{remark}

\begin{remark} \label{rk:some.tau}

If the assumptions of Theorem 
\ref{th:Z1} hold only for a subset of $\tau\in (0,1)$, then the convergence result in the theorem will hold for $\tau$ restricted to that set. 
\end{remark}

\begin{remark} \label{rk:structure}
 Since $h_n$ is regularly varying with exponent $1/\gamma_1$, the convergence rate in Theorem \ref{th:Z1} is (up to a slowly varying function), $n^{-(1-1/\gamma_1)}$, which becomes slower as $\gamma_1$ decreases to 1. On the other hand, as $\gamma_1$ increases to 2, the scaling approaches the standard rate of $n^{-1/2}$ as expected with a Gaussian limit.

Overall, 
the structure underlying the proof of Theorem \ref{th:Z1} is as follows. 
Since $X_n(\tau,q(\tau))$ does not depend on $t$, we also have 
\begin{equation*} \label{e:main.split}
  \hat q_n(\tau) = {\rm argmin}_t \{X_n(\tau,t) - X_n(\tau,q(\tau))\}.  
\end{equation*}
Writing $u=(q(\tau)-t)n/h_n$ or $t=q(\tau)-(h_n/n)u$, we have  
\begin{equation}
    \frac{n}{h_n}(\hat q_n(\tau)-q(\tau))={\rm argmin}_u \{X_n (\tau, q(\tau)-uh_n/n)- X_n(\tau,q(\tau))\}.  
\end{equation}
In other words, defining for each $\tau$ a sequence of continuous stochastic processes on $\R$ by
\begin{equation*}\label{eqn:zprocess}
 Z_{n,\tau}(u)= X_n (\tau, q(\tau)-uh_n/n)- X_n(\tau,q(\tau)), \, u\in\R \,, 
\end{equation*}
we have
\begin{equation} \label{e:exr.diff}
(n/h_n)\bigl( \hat q_n(\tau) -q(\tau)\bigr) = {\rm argmin}_{u}
Z_{n,\tau}(u), \ 0<\tau<1.
\end{equation}

 We will show that $ (n/h_n^2)Z_{n,\tau}(\cdot)$ converges weakly in $C(\R)$ from which an easy argument allows us to conclude that $(n/h_n)\bigl( \hat q_n(\tau)-q(\tau)\bigr)$ converges weakly to the argmin of the limiting process.  This is the same idea as used by \cite{davis1992m}, \cite{knight2002limiting}, \cite{goh2009nonstandard} and others on similar problems.   

In fact, we will prove the weak joint convergence for a finite number of distinct values of $\tau$. Specifically, we will prove that 
   for distinct $\tau_1,\ldots, \tau_k$ in $(0,1)$, we have, as $n\to\infty$,
 \begin{equation} \label{e:big.conv}
 \left\{\left( \frac{n}{h_n^2} Z_{n,\tau_j} (u),\, j=1,\ldots, k\right), u\in\R \right\}
 \Rightarrow \bigl\{\bigl( -uZ_{\tau_j}+\frac{u^2}{2}g_Y(q(\tau_j)), \, j=1,\ldots, k\bigr), u\in\R\bigr\}
\end{equation}
in $C^k(\R)$. Since the limiting processes have quadratic sample paths, the minimization, in the limit, is  explicit.
In order to obtain this weak convergence, we will use the identity,  (see \cite[p. 121]{koenker2005}). 
 \begin{equation} \label{e:identity}
  \rho_\tau(u-v)-\rho_\tau(u) = -v\bigl( \tau-\one(u\leq 0)\bigr) +
  \int_0^v \bigl( \one(u\leq s)-\one(u\leq 0)\bigr)\, ds,
\end{equation}
which is valid for all $u,v\in\bbr$.  It follows that
\begin{align} \label{e:split.Zn}
\frac{n}{h_n^2}Z_{n,\tau}(u) =& -uh_n^{-1} \sum_{i=1}^n \frac{D_i}{\max(e(X_i),b_n)}
  \bigl( \tau-\one(Y_i\leq q(\tau))\bigr) \\
\notag +& \frac{n}{h_n^2}\sum_{i=1}^n \frac{D_i}{\max(e(X_i),b_n)}\int_0^{uh_n/n}
          \bigl( \one(Y_i\leq q(\tau)+s)-\one(Y_i\leq q(\tau))\bigr)\,
          ds \\
\notag =:& Z^{(1)}_{n,\tau}(u)+ Z^{(2)}_{n,\tau}(u). 
\end{align} 
The first term, $Z^{(1)}_{n,\tau}(u)$, which is linear in $u$, will be shown to converge to 
$-uZ_{\tau}$, where $Z_\tau$ has an infinitely divisible distribution  when $\gamma_1\in (1,2)$, and Gaussian when $\gamma_1\geq 2$.  On the other hand, the second term will be shown to converge in probability to $(u^2/2)g_Y(q(\tau_j))$, from which finite dimensional convergence of  $\bigl((n/h_n^2)Z_{n,\tau}(u)\bigr)$ to $\bigl(-uZ_{\tau}+\frac{u^2}{2}g_Y(q(\tau))\bigr)$ follows easily.  Since the sample paths of $\bigl(Z_{n,\tau}(u)\bigr)$ are convex (in $u$), the finite dimensional convergence of $\bigl((n/h_n^2)Z_{n,\tau}(u)\bigr)$ can be upgraded to weak convergence in $C(\R)$  and the result will then follow by an application of the continuous mapping theorem.  The details are  relegated to the Appendix.  
\end{remark}

 \section{IPW estimation for intermediate quantiles} \label{sec:intermediate}

 It is often of interest to concentrate on quantiles close to 0 or to 1. 
 In this section we consider the so-called intermediate quantiles. Intermediate quantiles that are near 0 are defined as a sequence $\tau=\tau_n\to 0$, with
 \begin{equation} \label{e:interm.q}
   \tau_n\to 0 \ \ \text{and}\ \  n\tau_n\to\infty \ \ \text{as} \ n\to\infty\,, 
 \end{equation}
 with an appropriately modified definition for intermediate quantiles close to 1. Our results in this section are formulated for the case of intermediate quantiles close to 0, but the modifications needed to treat the intermediate quantiles close to 1 instead are obvious. 

 When $\tau_n$ approaches 0 too fast for the second requirement in 
 \eqref{e:interm.q} to hold, one typically speaks of extreme quantiles \cite{wang:li:he2012,deuber:lo:engelke:maathuis2023}, a situation not considered in this paper. We still estimate an intermediate quantile $\tau_n$ using the estimator $\hat q_n(\tau) =\hat q_n(\tau_n)$
 in \eqref{e:argmin.n}.  However, the specific assumptions  for this scenario need to be modified somewhat.

We still assume that each quantile is well defined, in the sense that
\eqref{e:quant.def} holds for each $\tau_n$, as well existence of a density of $Y$ at each $q(\tau_n)$ (see \eqref{e:dens,tau}).  Now we also require a smoothness condition.   Specifically, we assume there 
is a positive sequence $\{\theta_n\}$ with $\theta_n\to 0$ such that
\begin{equation} \label{e:theta.n}
g_Y(q(\tau_n)+y) = (1+o(1)) g_Y(q(\tau_n)) \ \text{uniformly in} \
|y|\leq\theta_n
\end{equation}
and  
\begin{equation} \label{e:hn.thetan}
  h_n/n=o(\theta_n), \, n\to\infty.
\end{equation}
Here $\{h_n\}$ is the  
sequence needed for proper normalization in the weak limit theorem below. It is now 
defined by
\begin{equation} \label{e:h_n.interm}
h_n=\frac{1}{g_Y(q(\tau_n))}\inf\bigl\{ w>0:\, w^{-1} \bbP\bigl( e(X_1)\leq w^{-1})\leq
1/(n\tau_n)\bigr\}, \ n=1,2,\ldots. 
\end{equation}
We can write
\begin{equation} \label{e:hn.rv}
  h_n=\frac{r_{1/\gamma_1}(n\tau_n)}{g_Y(q(\tau_n))},
\end{equation}
where the function $r_{1/\gamma_1}$ is regularly varying at infinity
with exponent $1/\gamma_1$. Furthermore, 
\begin{equation} \label{e:express.n}
n \sim \frac{g_Y(q(\tau_n))   h_n}{\tau_n} \bigl[ \bbP\bigl(e(X_1)\leq \bigl(
g_Y(q(\tau_n))   h_n\bigr)^{-1}\bigr)\bigr]^{-1}. \end{equation} 

Because we are considering the intermediate quantiles close to $0$, the assumption \eqref{e:small.p} is now  replaced by the
assumption
\begin{equation} \label{e:small.p.interm}
  \lim_{y\downarrow y_*  } \frac{\bbP(Y_1\leq y\ | \ e(X_1)=z)}{\bbP(Y_1\leq
  y)}=\beta\in (0,\infty)
\end{equation}
uniformly in $z$ near 0, where $y_*=\inf\{ y\in\bbr:\, \bbP(Y_1\leq y)>0\}\in [-\infty, \infty)$.

Finally, the truncation levels $\{b_n\}$ are now assumed to satisfy 
\begin{equation} \label{e:bn.ass.interm}
 g_Y(q(\tau_n))   h_nb_n\to \theta>0 \ \text{as} \ n\to\infty,
  \end{equation}
instead of \eqref{e:moderate.ass}. 
 
Most of the assumptions imposed in this section are analogous to those used in the previous section in the case of fixed quantiles. The assumption on existence of a sequence $\{\theta_n\}$ satisfying \eqref{e:theta.n} and \eqref{e:hn.thetan}, however, has no analogue in the case of fixed quantiles, and the reader may wonder how restrictive this assumption is. Specifically, does it impose restrictions on the intermediate quantile sequence $\{\tau_n\}$? To clarify this point we consider several examples of possible distributions of $Y$. 

\begin{example}
\label{ex:example1}
  Suppose that for some $\alpha>0, C>0$ and $y_0\in\bbr$,
\begin{equation*} 
 g_Y(y) = C\alpha e^{\alpha y}, \, y<y_0. 
\end{equation*}
In this case \eqref{e:theta.n} holds for any sequence $\{\theta_n\}$
converging to 0. Furthermore, for large $n$, 
$$
g_Y(q(\tau_n))= \alpha \tau_n.
$$
Therefore, from \eqref{e:hn.rv},  we get that
$$
h_n/n = \frac{1}{\alpha n\tau_n} r_{1/\gamma_1}(n\tau_n) \to 0, \ n\to\infty,
$$
since $\gamma_1>1$. Therefore, we can choose $\theta_n\to 0$ such that 
\eqref{e:hn.thetan} holds. That is, in this case \eqref{e:theta.n} and
\eqref{e:hn.thetan} do not impose any additional constraints on
$\{\tau_n\}$. 
\end{example}

\begin{example}
\label{ex:example2}
  Suppose that for some $p>0, C>0$ and $y_0\in\bbr$,
\begin{equation*} 
 g_Y(y) = Cp|y|^{-(p+1)}, \, y<y_0. 
\end{equation*}
In that case \eqref{e:theta.n} holds for any sequence
$\theta_n=o(|q(\tau_n)|)$. However,  for large $n$, 
$$
q(\tau_n) = -\text{const.}\, \tau_n^{-1/p}, \ \ 
g_Y(q(\tau_n))= \text{const.}\,  \tau_n^{(1+1/p)}.
$$
Therefore, for such $n$, 
$$
h_n/n =  \text{const.} \frac{1}{n \tau_n^{(1+1/p)}}
r_{1/\gamma_1}(n\tau_n) = \text{const.}
\frac{r_{1/\gamma_1}(n\tau_n)}{n\tau_n} \tau_n^{-1/p} = o(\tau_n^{-1/p})
$$
since $\gamma_1>1$. Therefore, we can choose $\theta_n =
o(\tau_n^{-1/p})$ such that  
\eqref{e:hn.thetan} holds. Once again, in this case \eqref{e:theta.n} and
\eqref{e:hn.thetan} do not introduce any additional constraints on
$\{\tau_n\}$. 
\end{example}

\begin{example}
\label{ex:example3}
  Suppose that for some $C>0$ and $y_0\in\bbr$,
\begin{equation*}
 g_Y(y) = C|y|e^{-y^2/2},  \, y<y_0. 
\end{equation*}
In that case \eqref{e:theta.n} holds for any sequence
$\{\theta_n\}$ such that
$$
\theta_nq(\tau_n)\to 0.
$$
However, for large $n$, 
$$
q(\tau_n) = -\bigl( 2\log (C/\tau_n)\bigr)^{1/2}, \ \ 
g_Y(q(\tau_n))= \text{const.} \, \tau_n\bigl( \log (C/\tau_n)\bigr)^{1/2}. 
$$
In particular, \eqref{e:theta.n} holds for any sequence
$(\theta_n)$ such that
\begin{equation} \label{e:theta.G}
\theta_n \bigl( \log (1/\tau_n)\bigr)^{1/2}\to 0.
\end{equation} 
Furthermore, for large $n$, 
$$
h_n/n =  \text{const.} \,\frac{1}{n \tau_n}\bigl( \log (C/\tau_n)\bigr)^{-1/2}
r_{1/\gamma_1}(n\tau_n)  = o\bigl[ \bigl( \log (C/\tau_n)\bigr)^{-1/2}\bigr]
$$
since $\gamma_1>1$. Therefore, we can choose $\theta_n$ satisfying
\eqref{e:theta.G} such that  
\eqref{e:hn.thetan} holds. Again, in this case \eqref{e:theta.n} and
\eqref{e:hn.thetan} do not introduce any additional constraints on
$\{\tau_n\}$. 
\end{example}

The following result is the analogue of Theorem \ref{th:Z1} in the case of intermediate quantiles. A process-level version is less natural now, so we state this theorem for a single sequence of intermediate quantiles. 

 \begin{theorem} \label{t:quant.lim.interm}
   Suppose that  Assumptions \ref{ass:unconfoundedness} and \ref{ass:RV_prop_score} and conditions \eqref{e:interm.q}, \eqref{e:theta.n}, \eqref{e:hn.thetan}, \eqref{e:small.p.interm} and 
   \eqref{e:bn.ass.interm} 
   hold. Then  
   $$
   (n/h_n)\bigl( \hat q_n(\tau_n) -q(\tau_n)\bigr) \Rightarrow Z, 
   $$
   where $Z$ is the infinitely divisible random variable described in
   Proposition \ref{pr:Z1.inter}. 
  \end{theorem}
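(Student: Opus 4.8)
The proof follows the architecture of the proof of Theorem~\ref{th:Z1}; I indicate only what changes when the quantile level drifts with $n$.

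\emph{Reduction and Knight split.} As in Remark~\ref{rk:structure}, $X_n(\tau_n;q(\tau_n))$ does not depend on $t$ and $t\mapsto X_n(\tau_n;t)$ is convex, so with $t=q(\tau_n)-uh_n/n$ we get $(n/h_n)(\hat q_n(\tau_n)-q(\tau_n))={\rm argmin}_u Z_n(u)$, where $Z_n(u):=X_n(\tau_n;q(\tau_n)-uh_n/n)-X_n(\tau_n;q(\tau_n))$. Since the argmin is unchanged by multiplying by a positive constant, I work with $\widetilde Z_n(u):=\tfrac{n}{g_Y(q(\tau_n))h_n^2}Z_n(u)$: the extra factor $g_Y(q(\tau_n))^{-1}$ is a constant (hence invisible) in the fixed-$\tau$ setting, but here it is exactly what keeps the quadratic part below from degenerating, since in general $g_Y(q(\tau_n))\to0$. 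Applying~\eqref{e:identity} termwise as in~\eqref{e:split.Zn} gives $\widetilde Z_n(u)=Z^{(1)}_n(u)+Z^{(2)}_n(u)$, where
\[
Z^{(1)}_n(u)=-\frac{u}{g_Y(q(\tau_n))h_n}\sum_{i=1}^n\frac{D_i}{\max(e(X_i),b_n)}\bigl(\tau_n-\one(Y_i\le q(\tau_n))\bigr)
\]
is linear in $u$, and $Z^{(2)}_n(u)=\tfrac{n}{g_Y(q(\tau_n))h_n^2}\sum_{i=1}^n\frac{D_i}{\max(e(X_i),b_n)}\int_0^{uh_n/n}\bigl(\one(Y_i\le q(\tau_n)+s)-\one(Y_i\le q(\tau_n))\bigr)ds$ is the remainder. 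By~\eqref{e:hn.rv}, the denominator in $Z^{(1)}_n$ equals $g_Y(q(\tau_n))h_n=r_{1/\gamma_1}(n\tau_n)$.

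\emph{The linear term (the crux).} The summands of $Z^{(1)}_n$ form a row-i.i.d.\ triangular array, bounded in modulus by $1/b_n\sim\theta^{-1}/r_{1/\gamma_1}(n\tau_n)$ by~\eqref{e:bn.ass.interm}--\eqref{e:hn.rv}, divided by $r_{1/\gamma_1}(n\tau_n)$. I show that, after recentering by its mean, this normalized sum converges to the infinitely divisible $Z$ of Proposition~\ref{pr:Z1.inter}, by verifying the classical triangular-array criterion for convergence to an infinitely divisible law: convergence of the Lévy measures, of the truncated variances, and of the centering constants. The inputs are: Assumption~\ref{ass:unconfoundedness}, which gives $\E[D_i\mid e(X_i)]=e(X_i)$ and the conditional factorization of $D_i$ and $Y_i$; assumption~\eqref{e:small.p.interm}, used to replace $\bbP(Y_1\le q(\tau_n)\mid e(X_1)=z)$ by $(1+o(1))\beta\tau_n$ uniformly in $z$ near $0$ (this requires a diagonal argument, since the limit $z\to0$ in~\eqref{e:small.p.interm} and the limit $q(\tau_n)\downarrow y_*$ are in force simultaneously); Assumption~\ref{ass:RV_prop_score} with Karamata's theorem, to evaluate integrals such as $\E[e(X_1)\one(e(X_1)\le w)]$ and $\E[e(X_1)\max(e(X_1),b_n)^{-2}\one(e(X_1)\le w)]$ as $w\to0$; and the defining relation~\eqref{e:express.n} --- equivalently, $n\tau_n\,r_{1/\gamma_1}(n\tau_n)^{-1}\bbP(e(X_1)\le r_{1/\gamma_1}(n\tau_n)^{-1})\to1$ --- which fixes the scale. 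The resulting limit is infinitely divisible with finite variance (as $\gamma_1<2$) and no Gaussian component (the summands being uniformly $O(b_n^{-1})$, the Lévy measure is concentrated near the origin with the $\gamma_1$-stable-type density cut off at level $\theta^{-1}$), as in Proposition~\ref{pr:Z1.inter}. \textbf{This is the main obstacle}: carrying the three normalizations $\tau_n$, $g_Y(q(\tau_n))$ and $r_{1/\gamma_1}(n\tau_n)$ through the computation and, in particular, checking that the centering constant (which ends up involving $\beta$) --- not only the Lévy measure and truncated variance --- converges, is the delicate bookkeeping; it is also where the hypothesis $\theta>0$ in~\eqref{e:bn.ass.interm} is essential.

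\emph{The remainder term.} I show $Z^{(2)}_n(u)\cip u^2/2$ for each fixed $u$. Conditioning on $e(X_1)$, Assumption~\ref{ass:unconfoundedness} and~\eqref{e:small.p.interm} give $\bbP(q(\tau_n)<Y_1\le q(\tau_n)+s\mid e(X_1)=z)=(1+o(1))\beta\int_{q(\tau_n)}^{q(\tau_n)+s}g_Y$ for $z$ near $0$ and $s\in[0,uh_n/n]$; since $uh_n/n=o(\theta_n)$ by~\eqref{e:hn.thetan}, assumption~\eqref{e:theta.n} turns this into $(1+o(1))\,s\,g_Y(q(\tau_n))$, and combining with $e(X_1)/\max(e(X_1),b_n)\to1$ and integrating over $s$ gives $\E[Z^{(2)}_n(u)]\to u^2/2$. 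The same conditioning shows each summand is $O(h_n/n)$; with $n$ independent summands, the prefactor $(n/(g_Y(q(\tau_n))h_n^2))^2$ together with the Karamata estimates above and the boundedness granted by the truncation at $b_n$ yields $\Var(Z^{(2)}_n(u))\to0$, hence the claim.

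\emph{Convexity and conclusion.} The two previous steps give the finite-dimensional convergence $\widetilde Z_n(\cdot)\Rightarrow(-uZ+u^2/2)$. Since $u\mapsto\widetilde Z_n(u)$ is convex (the check loss is convex in $t$ and $u\mapsto t$ is affine) and the limit path is a.s.\ finite, continuous and strictly convex, finite-dimensional convergence upgrades to weak convergence in $C(\R)$ uniformly on compacta, and the argmin functional is continuous at such paths --- the standard convexity argument of~\cite{davis1992m,knight2002limiting,goh2009nonstandard}, supplemented by a short no-escape-to-infinity step (again from convexity, via $\widetilde Z_n(\pm M)$ for large $M$) to localize the argmin. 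Since $u\mapsto-uZ+u^2/2$ has unique minimizer $Z$, we obtain ${\rm argmin}_u\widetilde Z_n(u)\Rightarrow Z$, i.e.\ $(n/h_n)(\hat q_n(\tau_n)-q(\tau_n))\Rightarrow Z$.
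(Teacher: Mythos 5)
Your proposal matches the paper's proof in all essentials: the same Knight-identity decomposition with the extra $g_Y(q(\tau_n))^{-1}$ normalization (the paper's \eqref{e:split.Zn.mod}), the same verification of the triangular-array criterion for infinitely divisible limits (vague convergence of the Lévy measures, truncated means, and truncated variances, as in Proposition \ref{pr:Z1.inter}), the same first/second-moment argument giving $Z^{(2)}_n(u)\cip u^2/2$ (Proposition \ref{prop:cip.inter}), and the same convexity-plus-argmin conclusion. The steps you flag as the delicate bookkeeping — tracking $\tau_n$, $g_Y(q(\tau_n))$ and $r_{1/\gamma_1}(n\tau_n)$ through \eqref{e:express.n}, and the essential role of $\theta>0$ in \eqref{e:bn.ass.interm} — are exactly where the paper spends its effort, so the outline is correct and not a different route.
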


  \begin{remark} \label{rk:interm.structure}
The proof of Theorem \ref{t:quant.lim.interm} has a structure similar to that of Theorem \ref{th:Z1} and described in Remark \ref{rk:structure}. We use now $\tau=\tau_n$  and the equation \eqref{e:split.Zn} is now changed (with a new normalization) to 
\begin{align} \label{e:split.Zn.mod}
\frac{n}{g_Y(q(\tau_n))h_n^2}&Z_{n,\tau_n}(u) = -u\bigl(g_Y(q(\tau_n))h_n\bigr)^{-1} \sum_{i=1}^n  \frac{D_i}{\max(e(X_i),b_n)} 
  \bigl( \tau_n-\one(Y_i\leq q(\tau_n))\bigr) \\
\notag +& \frac{n}{g_Y(q(\tau_n))h_n^2}\sum_{i=1}^n \frac{D_i}{\max(e(X_i),b_n)}\int_0^{uh_n/n}
          \bigl( \one(Y_i\leq q(\tau_n)+s)-\one(Y_i\leq q(\tau_n))\bigr)\,
          ds \\
\notag =:& Z^{(1)}_{n}(u)+ Z^{(2)}_{n}(u). 
\end{align}
Once again, we will show that $Z^{(1)}_{n}(u)$ converges to $-uZ$  and that $Z^{(2)}_{n}(u)$ converges in probability to $u^2/2$.  This is the content of Propositions  \ref{prop:cip.inter} and
\ref{pr:Z1.inter} in the Appendix.
\end{remark}

\section{Quantile treatment effect process}  \label{sec:qte}

The results of Sections \ref{sec:IPW-quantile} 
and \ref{sec:intermediate} can be 
viewed as leading to comparing the effect of two different treatments on quantiles of interest of the clinical outcomes  (see \cite{firpo2007}).
In this section we take the final step necessary to that end.   We switch back to the notation of Section \ref{sec:notation}, so that   $Y_i(1)$ and $Y_i(0)$ denote the  potential outcomes under treatment 1 and treatment 0, respectively. It is the quantiles of the distributions of these random variables that we wish to compare. 

We apply the robustified version of the IPW estimator \eqref{e:argmin.n} to the two quantiles above. That is, we let 
\begin{equation}
\label{eq:IPW_quant}
    \begin{split}
\hat{q}_1(\tau)&:=\mbox{argmin}_{t\in\mathbb{R}} \sum_{i=1}^n\frac{D_i}{ \max(e(X_i),b_n^{(1)})}\rho_\tau(Y_i(1)-t)  \\
\hat{q}_0(\tau)&:=\mbox{argmin}_{t\in\mathbb{R}} \sum_{i=1}^n\frac{1-D_i}{\max(1- e(X_i),b_n^{(0)})}\rho_\tau(Y_i(0)-t) \,.
    \end{split}
\end{equation}
for two threshold sequences $\{b_n^{(1)}\}$ and $\{b_n^{(0)}\}$, corresponding to the two treatments.  The two sequences may or may not be the same, or even similar. The choice of the sequences is directly related to the regular variation properties of the distribution of the propensity score near 0 and near 1. The exact assumptions are stated in the Theorem \ref{thm:joint} below. 
The interesting feature of this theorem is that, under certain conditions, the estimated quantiles $\hat{q}_1(\tau)$ and $\hat{q}_0(\tau)$ are asymptotically independent. The reason for this phenomenon is that, in Theorem \ref{thm:joint}, the joint distribution of properly centered and normalized quantiles is, asymptotically, infinitely divisible. Under the assumptions of the theorem the Gaussian component of the infinitely divisible distribution disappears, and no part of the Poissonian component affects both estimated quantiles at the same time. This is established  both in the case of fixed quantiles  and  in the case of intermediate quantiles. The result requires the conditions of
Theorem  \ref{th:Z1} in the case of fixed quantiles or the conditions of Theorem \ref{t:quant.lim.interm} in the case of intermediate quantiles. In this case the assumptions need to apply to both extreme points of the propensity scores, i.e., to the situation when $e(X_i)$ is near 0 and to the situation when $e(X_i)$ is near 1. All of our assumptions have been previously stated for the situation when $e(X_i)$ is near 0. The corresponding versions near 1 are entirely analogous; sometimes it only requires switching from $e(X_i)$ to $1-e(X_i)$.

In the statement of Theorem \ref{thm:joint} below the sequences $\{h_n^{(1)}\}$ and $\{h_n^{(0)}\}$ are defined by \eqref{e:h_n} and by \eqref{e:h_n.interm}
applied to the laws of $e(X_i)$ and of $1-e(X_i)$ correspondingly.

\begin{theorem}\label{thm:joint}

Suppose that  Assumption  \ref{ass:unconfoundedness} is satisfied and that the propensity scores $e(X_i)$ and $1-e(X_i)$ satisfy Assumption 2.2, with regular varying indices, $\gamma_1$ and $\gamma_0$, respectively,  with $\gamma_1,\gamma_0\in (1,2).$  
 
   \smallskip 
    \noindent 1. (Fixed quantiles) Further assume that the conditions   
\eqref{e:dens,tau}, \eqref{e:moderate.ass}, \eqref{e:small.p}
 and 
 \eqref{e:small.p1}    
 hold both with $Y_1$ equal to $Y(1)$ and with $Y_1$ equal to $Y(0)$. Let $\theta_1$
and $\theta_0$ be the corresponding limits in \eqref{e:moderate.ass}, and replace the assumption \eqref{e:small.p1} with the assumption 
 \eqref{e:small.p-2} if the corresponding $\theta$
vanishes.  Then
\begin{align*}
 &\bigl[\bigl((n/h_n^{(1)})( \hat q_1(\tau) -q_1(\tau)),(n/h_n^{(0)})( \hat q_0(\tau) -q_0(\tau))\bigr), \, 0<\tau<1\bigr] \\
 \Rightarrow&  
 ~~~ \bigl[\bigl((Z_{1,\tau}/g_{Y(1)}(q_1(\tau))),    
  (Z_{0,\tau}/g_{Y(0)}(q_0(\tau)))\bigr), \, 0<\tau<1\bigr]   
  \end{align*}
in finite-dimensional distributions, 
where the limiting processes are independent and as described in Theorem \ref{th:Z1}. 

\smallskip 
    \noindent 2. (Intermediate quantiles) \ Assume conditions  \eqref{e:interm.q}, \eqref{e:theta.n}, \eqref{e:hn.thetan},\eqref{e:small.p.interm} and 
\eqref{e:bn.ass.interm} hold with $Y_1$ equal to $Y(1)$ and with $Y_1$ equal to $Y(0)$. 
Let $\theta_1$
and $\theta_0$ be the corresponding limits in \eqref{e:bn.ass.interm}. 
Then 
\begin{align*}
 &\bigl((n/h_n^{(1)})( \hat q_1(\tau_n) -q_1(\tau_n)),(n/h_n^{(0)})( \hat q_0(\tau_n) -q_0(\tau_n))\bigr) 
 \Rightarrow (Z_1,Z_0),
  \end{align*}
  where the limiting random variables are independent and as described in Theorem \ref{t:quant.lim.interm}. 
\end{theorem}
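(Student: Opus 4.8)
The plan is to reduce the joint statement to the single-quantile results already established (Theorem \ref{th:Z1} for fixed $\tau$, Theorem \ref{t:quant.lim.interm} for intermediate $\tau_n$) together with an asymptotic-independence argument. First I would recall, from Remark \ref{rk:structure}, that for each fixed $\tau$ and each treatment $j\in\{0,1\}$ one has the argmin representation $(n/h_n^{(j)})(\hat q_j(\tau)-q_j(\tau)) = \operatorname{argmin}_u Z^{(j)}_{n,\tau}(u)$, where $Z^{(j)}_{n,\tau}$ is the convex objective built from the check loss weighted by $D_i/\max(e(X_i),b_n^{(1)})$ for $j=1$ and by $(1-D_i)/\max(1-e(X_i),b_n^{(0)})$ for $j=0$. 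By the same convexity/continuous-mapping machinery already used for Theorem \ref{th:Z1}, it suffices to prove the joint functional convergence of the stacked processes
\begin{equation*}
  \Bigl\{\Bigl( (n/(h_n^{(1)})^2) Z^{(1)}_{n,\tau_\ell}(u),\ (n/(h_n^{(0)})^2) Z^{(0)}_{n,\tau_\ell}(u)\Bigr)_{\ell=1}^k,\ u\in\R\Bigr\}
\end{equation*}
to the stack of independent quadratic limits $(-uZ_{1,\tau_\ell}+\tfrac{u^2}{2}g_{Y(1)}(q_1(\tau_\ell)),\ -uZ_{0,\tau_\ell}+\tfrac{u^2}{2}g_{Y(0)}(q_0(\tau_\ell)))$, in $C^{2k}(\R)$; the argmins then converge jointly by continuous mapping, and the quadratic limits have explicit argmins $Z_{j,\tau}/g_{Y(j)}(q_j(\tau))$. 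The same reduction applies verbatim in the intermediate case using the decomposition \eqref{e:split.Zn.mod} and the normalizations of Section \ref{sec:intermediate}, with $k=1$.

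Next I would handle the two summands of each $Z^{(j)}_{n,\tau}$ as in Remark \ref{rk:structure}. The quadratic-in-$u$ terms $Z^{(2),j}_{n,\tau}(u)$ converge \emph{in probability} to $\tfrac{u^2}{2}g_{Y(j)}(q_j(\tau))$ for each $j$ separately (this is exactly what the proofs of Theorems \ref{th:Z1} and \ref{t:quant.lim.interm} establish), so jointly they converge in probability to the deterministic limit and contribute nothing to dependence; it remains to prove the joint weak convergence of the linear terms
\begin{equation*}
  \Bigl( (h_n^{(1)})^{-1}\!\sum_{i=1}^n \frac{D_i}{\max(e(X_i),b_n^{(1)})}\bigl(\tau_\ell-\one(Y_i(1)\le q_1(\tau_\ell))\bigr),\ (h_n^{(0)})^{-1}\!\sum_{i=1}^n \frac{1-D_i}{\max(1-e(X_i),b_n^{(0)})}\bigl(\tau_\ell-\one(Y_i(0)\le q_0(\tau_\ell))\bigr)\Bigr)_{\ell}
\end{equation*}
to an infinitely divisible vector with independent blocks. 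Since $\gamma_1,\gamma_0\in(1,2)$, each block converges to an infinitely divisible law with no Gaussian component (the variance is infinite), and the whole vector is a triangular-array row sum of i.i.d. summands; I would apply the multivariate infinitely-divisible limit theorem (convergence of the Lévy measures of the truncated sums plus centering, as in the standard theory of stable/ID limits for row sums) to identify the joint limit. The Lévy measure of the $i$-th summand lives on the union of the two coordinate axes: the $j=1$ coordinate is nonzero only on the event $\{D_i=1\}$ and is driven by $e(X_i)$ near $0$, while the $j=0$ coordinate is nonzero only on $\{D_i=0\}$ and is driven by $1-e(X_i)$ near $0$; the heavy-tailed mass comes from $e(X_i)$ small (forcing $D_i=1$) versus $e(X_i)$ near $1$ (forcing $D_i=0$), so a single summand cannot simultaneously be large in both coordinates. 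Hence the limiting Lévy measure is supported on the coordinate axes, which is precisely the statement that the two infinitely divisible components are independent; the Gaussian part vanishes because $\gamma_j<2$. For the cross terms one must also check that the truncated \emph{covariance} between the two coordinates of a summand, summed over $i$ and scaled by $(h_n^{(1)}h_n^{(0)})^{-1}$, tends to zero — this is immediate since the product $D_i(1-D_i)\equiv 0$, so the two coordinates of each summand are never simultaneously nonzero and the covariance is exactly $-\mu_n^{(1)}\mu_n^{(0)}$ of the (vanishing, after centering) means.

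The main obstacle is the joint infinitely-divisible limit of the linear terms — specifically, verifying the convergence of the (truncated) Lévy measures of the row-sum array in $\R^{2k}$ and controlling the centering constants so that the limit is exactly the product of the two marginal limits from Theorems \ref{th:Z1} and \ref{t:quant.lim.interm}. The structural fact that $D_i\in\{0,1\}$ makes the two coordinate blocks of every individual summand have disjoint supports does most of the work — it forces the limiting Lévy measure onto the coordinate subspaces and kills all cross-covariances — but one still has to do the bookkeeping: show the small-jump (truncated-variance) part is asymptotically negligible for $\gamma_j\in(1,2)$ after the stabilizing centering, show the large-jump intensities converge coordinatewise to the ones already identified in the single-quantile propositions (Propositions \ref{pr:z1}, \ref{pr:Z1.inter}), and invoke uniqueness of the ID limit. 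Once the joint weak convergence of the stacked processes in $C^{2k}(\R)$ is in hand, the conclusion for $\hat q_1$ and $\hat q_0$ (and hence for the QTE $\hat q_1(\tau)-\hat q_0(\tau)$) follows from the argmin continuous-mapping argument exactly as in the proofs of Theorems \ref{th:Z1} and \ref{t:quant.lim.interm}, with the limits independent by construction.
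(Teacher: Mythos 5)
Your proposal is correct and follows essentially the same route as the paper: reduce to joint convergence of the stacked convex objective processes, note that the quadratic terms converge in probability so only the linear terms matter, and establish the joint infinitely divisible limit by showing the limiting L\'evy measure is supported on the two coordinate blocks because $D_i(1-D_i)\equiv 0$ prevents a single summand from being large in both coordinates, with the cross-covariance vanishing for the same reason. The paper carries this out by verifying the three conditions of Kallenberg's criterion (the analogues of \eqref{e:right.t}, \eqref{e:tr.m} and \eqref{e:tr.v}) for the $2k$-dimensional array, exactly as you outline.
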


In the  potential outcomes framework for quantiles, the object of interest is the quantile treatment effect (QTE) defined as the difference between the quantile treatment to the treated (QTT) and the quantile treatment to the untreated (QTU), i.e., the difference between 
$q_1(\tau)$ and $ q_0(\tau)$. Therefore the QTE is naturally estimated by $\hat\delta(\tau)=\hat q_1(\tau)-\hat q_0(\tau)$. 
The limit distribution of this estimator can be found directly from Theorem \ref{thm:joint}. It is important to observe that the rate of convergence of $\hat q_1(\tau)$ and $\hat q_0(\tau)$ to their population counterparts strongly depends on the the behavior of the distribution of the propensity scores near the endpoints of the interval $(0,1)$. In particular, there is strong dependence on the exponents $\gamma_1$ and $\gamma_0$ in the regular variation assumption in Assumption \ref{ass:RV_prop_score}. Unless this behavior near the two endpoints is balanced, the deviations of one of the two quantile estimators may dominate the other, which will, in turn, affect the behavior of the estimated QTE. The balance condition is
\begin{equation} \label{eq:balance}
  \lim_{t\to 0} \frac{\bbP(1-e(X_1)\le t)}{\bbP(e(X_1)\le t)} = c\,, 
\end{equation}
where $c\in (0,\infty)$. If this condition holds, then $\gamma_1=\gamma_0$. 

The following corollary  describes the limit behavior of the estimate of the QTE.

\begin{corollary} \label{cor:qte}
For each of the two parts below assume that the conditions of the corresponding part in Theorem \ref{thm:joint} hold. In addition, 
assume that \eqref{eq:balance} holds, where we also allow $c=\infty$. 

\smallskip 
\noindent 1.  (Fixed quantiles) 
\begin{align}\label{eq:qte1}
    &\bigl[ \bigl( n/h_n^{(1)}\bigr)\left( \hat q_1(\tau)-\hat q_0(\tau) -(q_1(\tau)-q_0(\tau))\right), \, 0<\tau<1\bigr] \\
    \notag \Rightarrow  
&\bigl[ Z_{1,\tau}/g_{Y(1)}(q_1(\tau))    
  -c^{-1/\gamma_1}Z_{0,\tau}/g_{Y(0)}(q_0(\tau)), \, 0<\tau<1\bigr]
\end{align}
in finite-dimensional distributions. 

\smallskip 
\noindent 2.   (Intermediate quantiles)  Assume that 
\begin{equation} \label{eq:g-ratio}
    \lim_{n\to\infty}\frac{g_{Y(1)}(q_1(\tau_n))}{g_{Y(0)}(q_0(\tau_n))}=\rho \geq 0.
\end{equation}
Then
\begin{equation} \label{eq:qte2}
\bigl( n/h_n^{(1)}\bigr)\left( \hat q_1(\tau)-\hat q_0(\tau_n) -(q_1(\tau_n)-q_0(\tau_n))\right)\Rightarrow  
Z_{1} 
  -c^{-1/\gamma_1}\rho Z_{0}\,.
\end{equation}
\end{corollary}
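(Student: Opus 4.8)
The plan is to reduce both parts of Corollary~\ref{cor:qte} to Theorem~\ref{thm:joint} plus a purely deterministic computation of the ratio of the two normalizing sequences. Write $\hat\delta(\tau)=\hat q_1(\tau)-\hat q_0(\tau)$, $\delta(\tau)=q_1(\tau)-q_0(\tau)$, and decompose
\[
 \frac{n}{h_n^{(1)}}\bigl(\hat\delta(\tau)-\delta(\tau)\bigr)
 =\frac{n}{h_n^{(1)}}\bigl(\hat q_1(\tau)-q_1(\tau)\bigr)
 -\frac{h_n^{(0)}}{h_n^{(1)}}\cdot\frac{n}{h_n^{(0)}}\bigl(\hat q_0(\tau)-q_0(\tau)\bigr).
\]
By Theorem~\ref{thm:joint} the pair $\bigl((n/h_n^{(1)})(\hat q_1(\tau)-q_1(\tau)),\,(n/h_n^{(0)})(\hat q_0(\tau)-q_0(\tau))\bigr)$ converges --- jointly over finitely many $\tau$ in part~1, and as a single pair in part~2 --- to the product of the two independent limit laws in the statement. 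Since $h_n^{(1)}$ (and, in part~2, $h_n^{(1)}$ for the fixed sequence $\tau_n$) does not depend on the treatment label and is deterministic, it suffices to show that the scalar ratio $h_n^{(0)}/h_n^{(1)}$ has a limit $\kappa$ equal to the constant multiplying $Z_{0,\tau}$ (resp.\ $Z_0$) in \eqref{eq:qte1} (resp.\ \eqref{eq:qte2}); then Slutsky's theorem and the continuous mapping theorem applied to $(a,b)\mapsto a-\kappa b$ give the claim, with the independence of the two coordinates passed to the difference.

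The only real work is evaluating $\lim_n h_n^{(0)}/h_n^{(1)}$, and this is exactly where \eqref{eq:balance} is used. In the fixed-quantile case, by \eqref{e:h_n} the sequences $h_n^{(1)}$ and $h_n^{(0)}$ are the $1/n$-level generalized inverses of $\psi_1(w)=w^{-1}\bbP(e(X_1)\le w^{-1})$ and $\psi_0(w)=w^{-1}\bbP(1-e(X_1)\le w^{-1})$, each regularly varying at infinity with index $-\gamma_1=-\gamma_0$ (the equality being forced by \eqref{eq:balance}). Condition \eqref{eq:balance} gives $\psi_0(w)\sim c\,\psi_1(w)$ as $w\to\infty$, so the $1/n$-level of $\psi_0$ is asymptotically the $1/(cn)$-level of $\psi_1$; the standard uniform-convergence properties of generalized inverses of regularly varying functions then give $h_n^{(0)}/h_n^{(1)}\to\kappa$ with the power of $c$ recorded in \eqref{eq:qte1}. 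The boundary value $c=\infty$ is covered by the same estimate: then $h_n^{(0)}/h_n^{(1)}\to 0$, and because $(n/h_n^{(0)})(\hat q_0(\tau)-q_0(\tau))$ is tight by Theorem~\ref{thm:joint}, the whole treatment-$0$ term is $o_P(1)$ and the limit collapses to $Z_{1,\tau}/g_{Y(1)}(q_1(\tau))$.

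For the intermediate-quantile case the only change is that, by \eqref{e:hn.rv}, $h_n^{(j)}=r^{(j)}(n\tau_n)/g_{Y(j)}(q_j(\tau_n))$, where $r^{(j)}$ is the level function obtained from \eqref{e:h_n.interm} for the law of $e(X_i)$ ($j=1$) or $1-e(X_i)$ ($j=0$), regularly varying of index $1/\gamma_j$. Hence
\[
 \frac{h_n^{(0)}}{h_n^{(1)}}
 =\frac{r^{(0)}(n\tau_n)}{r^{(1)}(n\tau_n)}\cdot\frac{g_{Y(1)}(q_1(\tau_n))}{g_{Y(0)}(q_0(\tau_n))};
\]
the first factor tends to the same power of $c$ as before (the balance computation now carried out along the diverging level $n\tau_n$), and the second factor tends to $\rho$ by the added hypothesis \eqref{eq:g-ratio}, so $\kappa$ is the product of these limits and the Slutsky/continuous-mapping step yields \eqref{eq:qte2}; the degenerate cases $c=\infty$ or $\rho=0$ are handled by the same tightness argument as above. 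I expect the ratio computation to be the only delicate point: making fully rigorous the implication ``$\psi_0\sim c\,\psi_1$ at infinity $\Rightarrow$ ratio of generalized inverses $\to$ a power of $c$'' when the slowly varying parts of $\psi_0$ and $\psi_1$ need not coincide, and handling the boundary values $c\in\{0,\infty\}$ and $\rho=0$ uniformly. Everything else is a direct application of Theorem~\ref{thm:joint} and the continuous mapping theorem.
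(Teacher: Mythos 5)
Your strategy is exactly the paper's: the published proof is the same two-line argument (decompose the difference into the two normalized deviations, show $h_n^{(0)}/h_n^{(1)}$ converges to a constant $\kappa$, and invoke Theorem \ref{thm:joint} together with the continuous mapping theorem, with \eqref{eq:g-ratio} supplying the extra factor $\rho$ in the intermediate case). Structurally there is nothing missing.

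The one substantive step, however --- the value of $\lim_n h_n^{(0)}/h_n^{(1)}$ --- you assert rather than compute, and your own intermediate identities do not match the constant you claim. From $\psi_0\sim c\,\psi_1$ you correctly deduce $h_n^{(0)}\approx\psi_1^{\leftarrow}(1/(cn))$; but since $\psi_1$ is regularly varying at infinity with index $-\gamma_1$, this gives $h_n^{(0)}/h_n^{(1)}\to c^{+1/\gamma_1}$, not $c^{-1/\gamma_1}$. (Take $\psi_1(w)=Kw^{-\gamma_1}$ to see it: $h_n^{(1)}=(Kn)^{1/\gamma_1}$ and $h_n^{(0)}=(cKn)^{1/\gamma_1}$.) This is the reciprocal of the constant appearing in \eqref{eq:qte1} and in the paper's own proof, and it also contradicts your treatment of the boundary case: if $c=\infty$ then your relation $h_n^{(0)}\approx\psi_1^{\leftarrow}(1/(cn))$ forces $h_n^{(0)}/h_n^{(1)}\to\infty$, in which case $n/h_n^{(1)}$ is the wrong normalization for the difference and the tightness argument you invoke to kill the treatment-$0$ term does not apply (it is the treatment-$1$ term that would then be negligible under the normalization $n/h_n^{(0)}$). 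So the point you flag as ``the only delicate point'' is precisely where the argument is not yet closed: carry the generalized-inverse computation to the end, pin down the sign of the exponent of $c$, and reconcile it with the statement (as written, the constant in \eqref{eq:qte1} is consistent with your conclusion only if one replaces $c$ by $1/c$ in \eqref{eq:balance} or $c^{-1/\gamma_1}$ by $c^{1/\gamma_1}$ in the display). The same remark applies verbatim to Part 2, where the extra factor $g_{Y(1)}(q_1(\tau_n))/g_{Y(0)}(q_0(\tau_n))\to\rho$ coming from \eqref{e:hn.rv} is handled correctly.
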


\begin{proof} For Part 1, it follows from the definition of $h_n^{(1)}$ and $h_n^{(0)}$ and assumption \eqref{eq:balance} that $h_n^{(0)}/h_n^{(1)}\to c^{-1/\gamma_0}$, and hence  \eqref{eq:qte1} follows immediately from Theorem \ref{thm:joint}. Note that when $c=\infty$, it may or may not be true that $\gamma_0=\gamma_1$, but it does not affect the result. 

For Part 2, the argument is similar.  Assumptions \eqref{eq:balance} and \eqref{eq:g-ratio} imply that $h_n^{(0)}/h_n^{(1)}\to c^{-1/\gamma_0}\rho$, and \eqref{eq:qte2} again follows from Theorem \ref{thm:joint}. 
\end{proof}

    The case $c=0$ in \eqref{eq:balance} is also possible. In this case then the normalization in Corollary \ref{cor:qte} should be changed to $n/h_n^{(0)}$ and, as in the case $c=\infty$, the limit will consist of only one term carried over from 
Theorem \ref{thm:joint}. 

{
\section{Practical considerations}

Our theoretical analysis relies on a construction that is not directly applicable in practice because it assumes a known propensity score and truncation parameter. In this section we present some intuitive ways to address both of this issues as well as the question of debiasing the truncated IPW estimator.

A practical way to address the problem of estimating the unknown propensity score is to estimate it with a parametric model. If such a parametric model is well specified and satisfies some regularity conditions similar to Assumptions 3 and 4 in \cite{ma:wang2020}, our theoretical results will still apply to quantile IPW estimators with estimated propensity scores. We will see in our numerical studies that this approach works well in moderate sample size scenarios assuming a logistic regression model for the propensity score function $e(x)=\mathbb{E}[D_i|X_i=x]=e(x, \beta)$. We estimate $\beta$ by maximum likelihood and work with the estimated propensity scores $e(X_i, \hat\beta)$
in our IPW constructions.

\subsection{Debiasing the truncated IPW esimator}

The limit specified in Theorem \ref{th:Z1} does not have zero mean unless $\theta=0$. Therefore the IPW quantile estimator is asymptotically biased and the bias depends on the truncation parameter $b_n$. We note that given an observed sample, the standard  standard IPW estimator and the naive quantile estimator are special cases of our truncated IPW estimator. Indeed, if one takes $b_n=\min_{1\leq i \leq n}e(X_i,\hat\beta)$ one recovers the standard IPW estimator, while $b_n=\max_{1\leq i \leq n}e(X_i,\hat\beta)$ recovers the naive quantile estimator that assigns equal weight to all the observations. Furthermore, one can compute an arbitrarily dense set of intermediate estimators that lie in between the low bias but high variance standard IPW  estimator and the naive quantile estimator which has high bias but low variance. We suggest a simply data-driven bias correction strategy based on this observation. 
\begin{figure}[h!]
    \centering
        \includegraphics[scale=0.355]{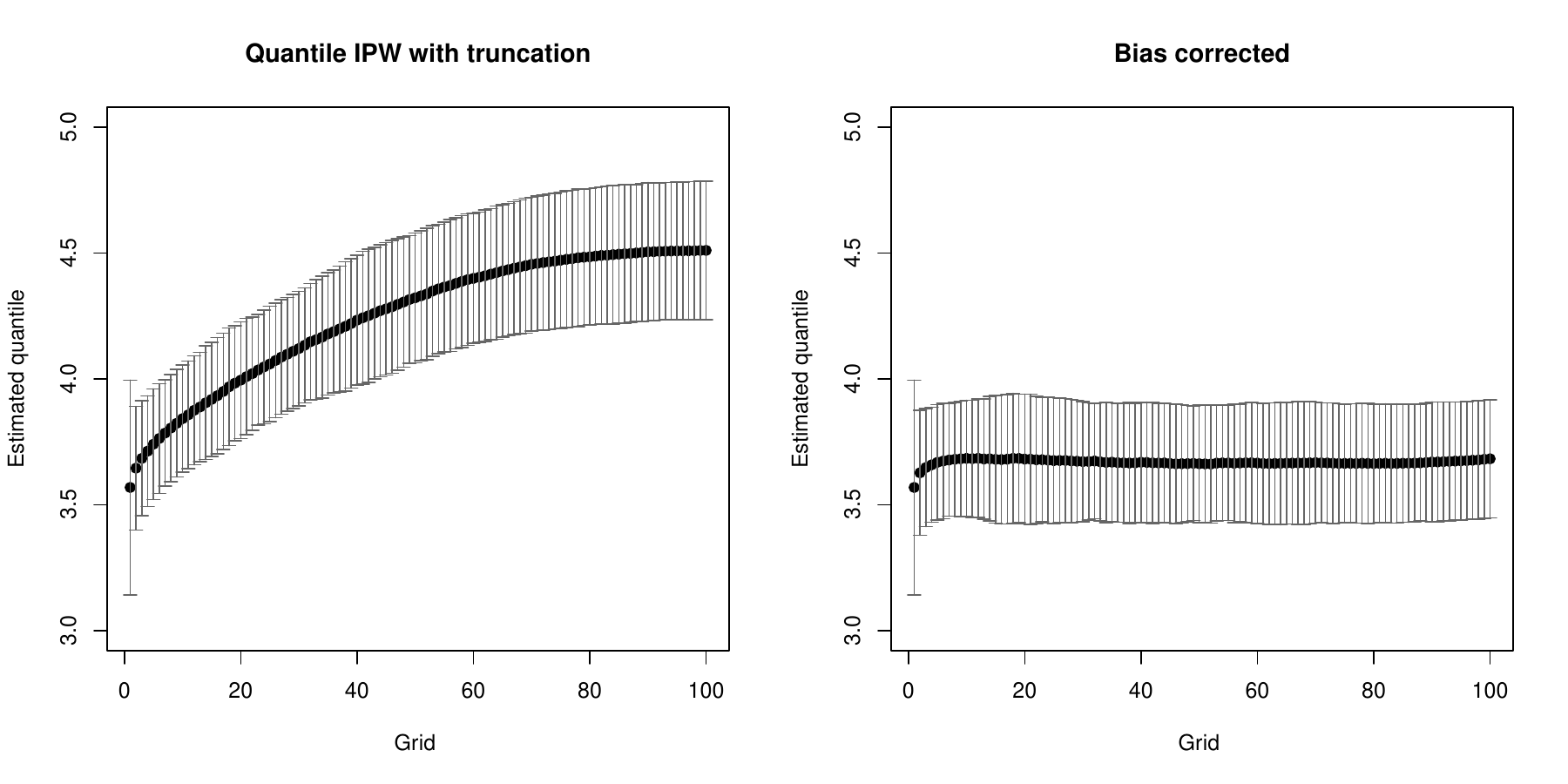}
          \renewcommand{\baselinestretch}{1}
          \caption{ {\footnotesize The left plot shows the performance of the estimator $\hat q_1(\tau,b_n)$ an exponentially increasing grid  from $b_{n,1}=\min_{1\leq i \leq n}e(X_i,\hat\beta)$ to $b_{n,L}=\max_{1\leq i \leq n}e(X_i,\hat\beta)$ and grid size $L=100$. The plot reports the mean of $\hat q_1(\tau,b_n)$ over 100 replications plus/minus 2 bootstrap standard deviations.  The  right plot shows the results over the same 100 replications for the additive bias correction estimator \eqref{eq:debiased_q} based on polynomials of order $k=2$.} }
    \label{fig:QTT_bias_correction}
\end{figure}
\begin{figure}[h!]
    \centering
        \includegraphics[scale=0.355]{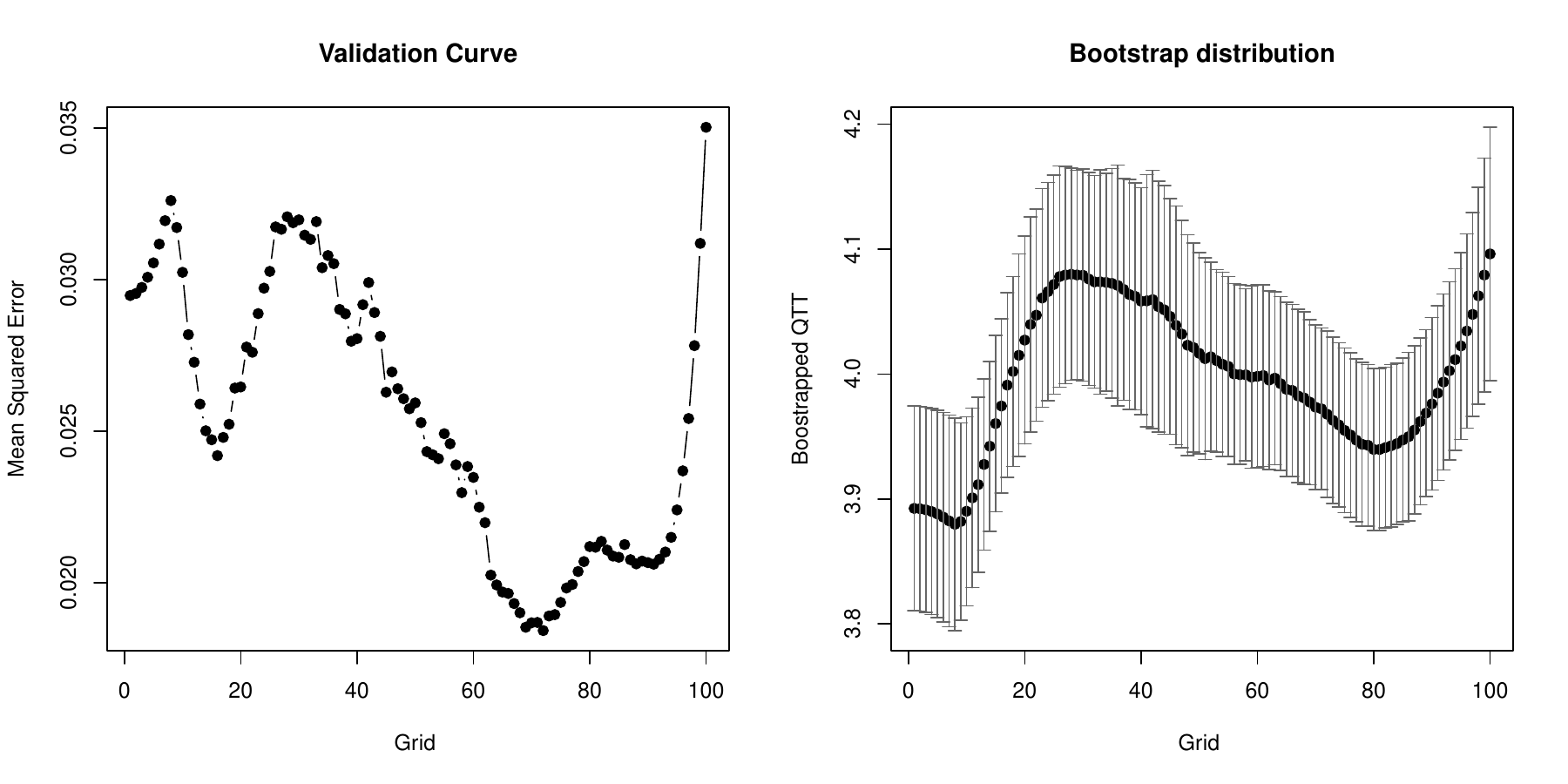}
              \renewcommand{\baselinestretch}{1}
              \caption{ {\footnotesize The left plot shows $\widehat{\textrm{MSE}(b_{n,\ell})}$ against $[L]$, where the MSE was estimated with 200 bootstrap samples of size $1000$ for one realization of model $(a)$ with $n=2000$.  The corresponding subsampling distribution of the debiased estimators $\{\hat{q}_{1,j}^{\mathsf{bc}}(\tau,b_{n,\ell})\}_{j=1}^{200}$ is illustrated on  the right plot. More precisely, the plot reports the bootstrap means $\bar{\hat{q}}_{1}^{\mathsf{bc}}(\tau,b_{n,\ell})$ plus/minus two bootstrap standard deviations. }}
    \label{fig:sub_QTT_dist}
\end{figure}
More formally, let $\hat{q}_{1}(\tau, b_n)$ be the estimated $\tau$-quantile $Y_i(1)$ obtained with $b_n\in G_L=\{b_{n,1},b_{n,2}\dots,b_{n,L}\}$, where $G_L$ is  grid of length $L$ of truncation values that increase exponentially from $b_{n,1}=\min_{1\leq i \leq n}e(X_i,\hat\beta)$ to $b_{n,L}=\max_{1\leq i \leq n}e(X_i,\hat\beta)$. Guided by the form of the asymptotic variance of the limiting distribution (see \eqref{e:ass.bias}, which is a power of $\theta$), we propose a simple additive bias correction approach that first fits 
\begin{equation*}
(\hat\vartheta_0,\hat\vartheta_1,\dots,\hat\vartheta_k)=\underset{\vartheta_0,\vartheta_1,\dots,\vartheta_k}{\argmin}\,\sum_{\ell=1}^L(\hat{q}_{1}(\tau,b_{n,\ell})-\sum_{j=0}^k\vartheta_jb_{n,\ell}^j)^2
\end{equation*}
and then obtains the bias corrected estimators
\begin{equation}
    \label{eq:debiased_q}
  \hat{q}_{1}^{\mathsf{bc}}(\tau,b_{n,\ell})=  \hat{q}_{1}(\tau,b_{n,\ell})-\sum_{j=1}^k\hat\vartheta_jb_{n,\ell}^j.
\end{equation}
In our experiments $k=2$, i.e, a quadratic bias correction worked very well. Figure \ref{fig:QTT_bias_correction} illustrates the performance of this method over 100 replications of model (a)  considered in Section \ref{sec:numexp} with $n=2000$.

\subsection{Data-driven selection of the truncation parameter}
\label{sec:data_driven_selection}
We propose a practical data-driven method for selecting $b_n^*\in G_L$ using the  bias-corrected estimators \eqref{eq:debiased_q} and an intuitive bootstrap subsampling approach. Given a sample $(Y_i,D_i,X_i)$, $i=1,\dots,n$ and  a grid $G_L$, we suggest the following simple steps for finding some $b_n^*\in G_L$.
\begin{enumerate}
    \item Compute $n_B$ bootstrap subsamples of size $m\leq n$.
    \item Compute the bias-corrected estimators $ \hat{q}_{1,j}^{\mathsf{bc}}(\tau,b_{n,\ell})$ each of the $j\in [n_B]$ subsamples. For this one needs to recompute the estimated propensity scores, the estimated truncated IPW quantiles and the bias correction as in \eqref{eq:debiased_q}.
    \item Define the bootstrap bias-corrected estimators $\bar{\hat{q}}_{1}^{\mathsf{bc}}(\tau,b_{n,\ell})=\frac{1}{n_B}\sum_{j=1}^{n_B}\hat{q}_{1,j}^{\mathsf{bc}}(\tau,b_{n,\ell})$ and find the $b_n$ that minimizes the estimated MSE as
    \begin{equation*}
        b_n^*=\argmin_{b_{n,\ell}\in G_L}\frac{1}{n_B}\sum_{j=1}^{n_B}(\hat{q}_{1,j}^{\mathsf{bc}}(\tau,b_{n,\ell})-\bar{\hat{q}}_{1}^{\mathsf{bc}}(\tau,b_{n,\ell}))^2=\argmin_{b_{n,\ell}\in G_L}\widehat{\textrm{MSE}}(b_{n,\ell}).
    \end{equation*}
\end{enumerate}
A straightforward modification of the above algorithm will also give a practical way to select the truncation level for the quantile treatment to the untreated and the quantile treatment effect estimators. Figure \ref{fig:sub_QTT_dist} illustrates the distribution of bootstrap MSE validation curve that is minimized by our data driven criterion. The plot reports the mean MSE and QTT with two times their bootstrap standard deviations for one realization of model (a) in section \ref{sec:numexp} with $n=2000$ and $200$ bootstrap samples of size $1000$. A more systematic study of the performance of this data-driven selection of the truncation parameter is reported next.

}
\subsection{Numerical experiments} \label{sec:numexp}

We consider a setting inspired by the example given in Section 3.3 of \cite{crump:hotz:imbens:mitnik2009}. The propensity score is assumed to follow a beta distribution with parameters $\phi$  and $\gamma$, so the mean is $\phi/(\alpha+\phi)\in[0,1]$ and the variance is $\phi\gamma/\{(\gamma+\phi)^2(\gamma+\phi+1)\}\in[0,1/4]$. We want to illustrate the performance of various IPW estimators when $e(X_1)\sim Beta(\phi,\gamma)$. In particular, this implies that the standard overlap assumptions are violated. For simplicity, 
we will consider a univariate covariate and a well-specified logistic regression model such that $\{D_i|X_i\}{\sim}Bernoulli(e(X_i,\beta))$, where $\{e(X_i,\beta)\}=\{e^{\beta X_i}/(e^{\beta X_i}+1)\}\overset{iid}{\sim}Beta(\phi,\gamma)$ and $\beta=1$. In the simulations we consider $\alpha\in\{0.2,0.5,1\}$ and $\phi=2$. 
We only observe the response when $D_i=1$. Letting $h(x)=1/(1+e^{-30-x})$, we consider the  models: 
\begin{enumerate}[(a)]
        \item $Y_i=1+X_ih(X_i)+Z_i$ and  $\{Z_i\}\overset{iid}{\sim}N(0,1)$.
        \item $Y_i=Z_ie^{X_ih(X_i)}$, where $\{Z_i\}$ are i.i.d. Fr\'echet with shape parameter $\gamma=3$.  
\end{enumerate}
\begin{figure}[h]
    \centering
    \begin{minipage}{0.24\textwidth}
        \centering
        \includegraphics[width=\textwidth]{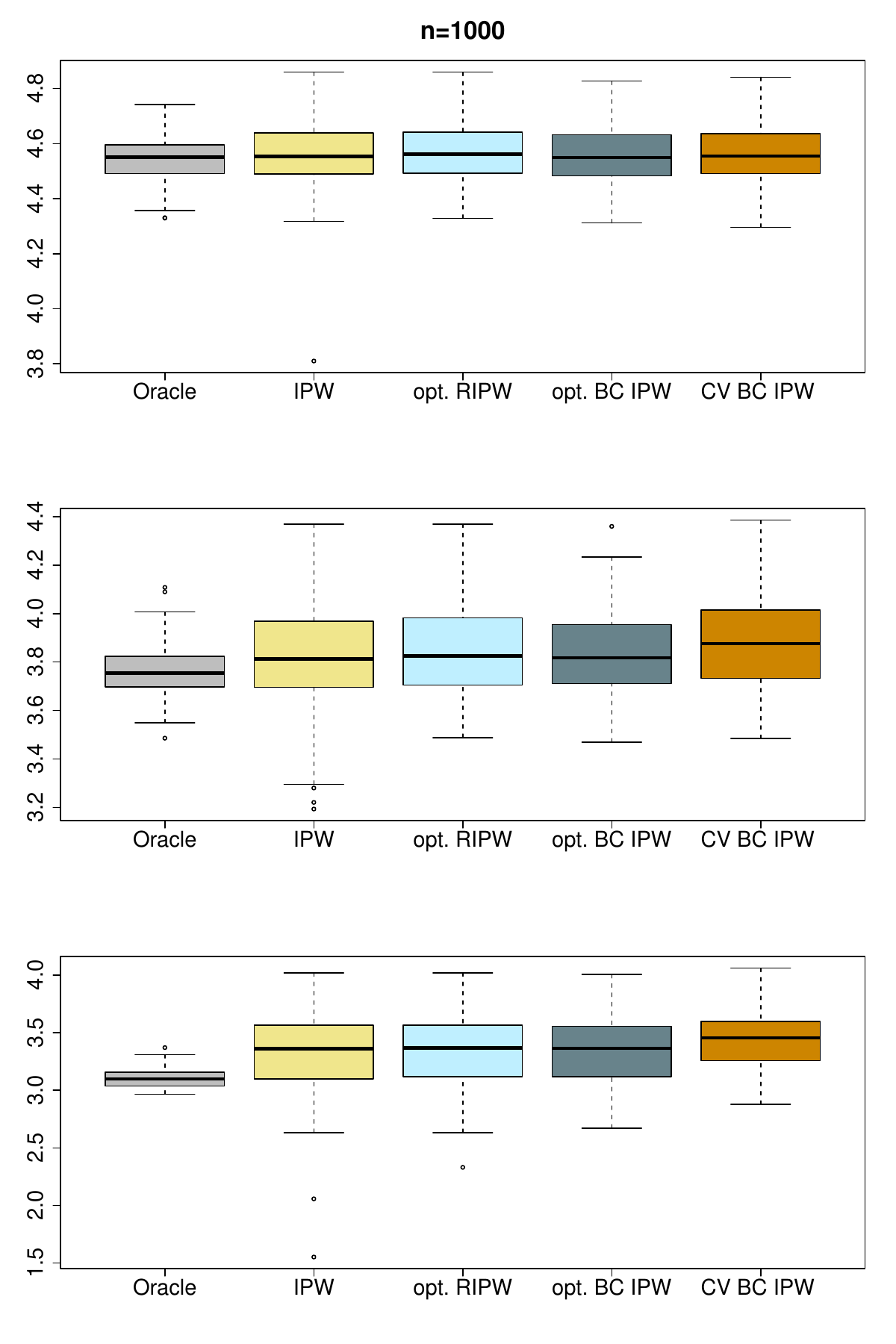}
    \end{minipage}
    \hfill
    \begin{minipage}{0.24\textwidth}
        \centering
        \includegraphics[width=\textwidth]{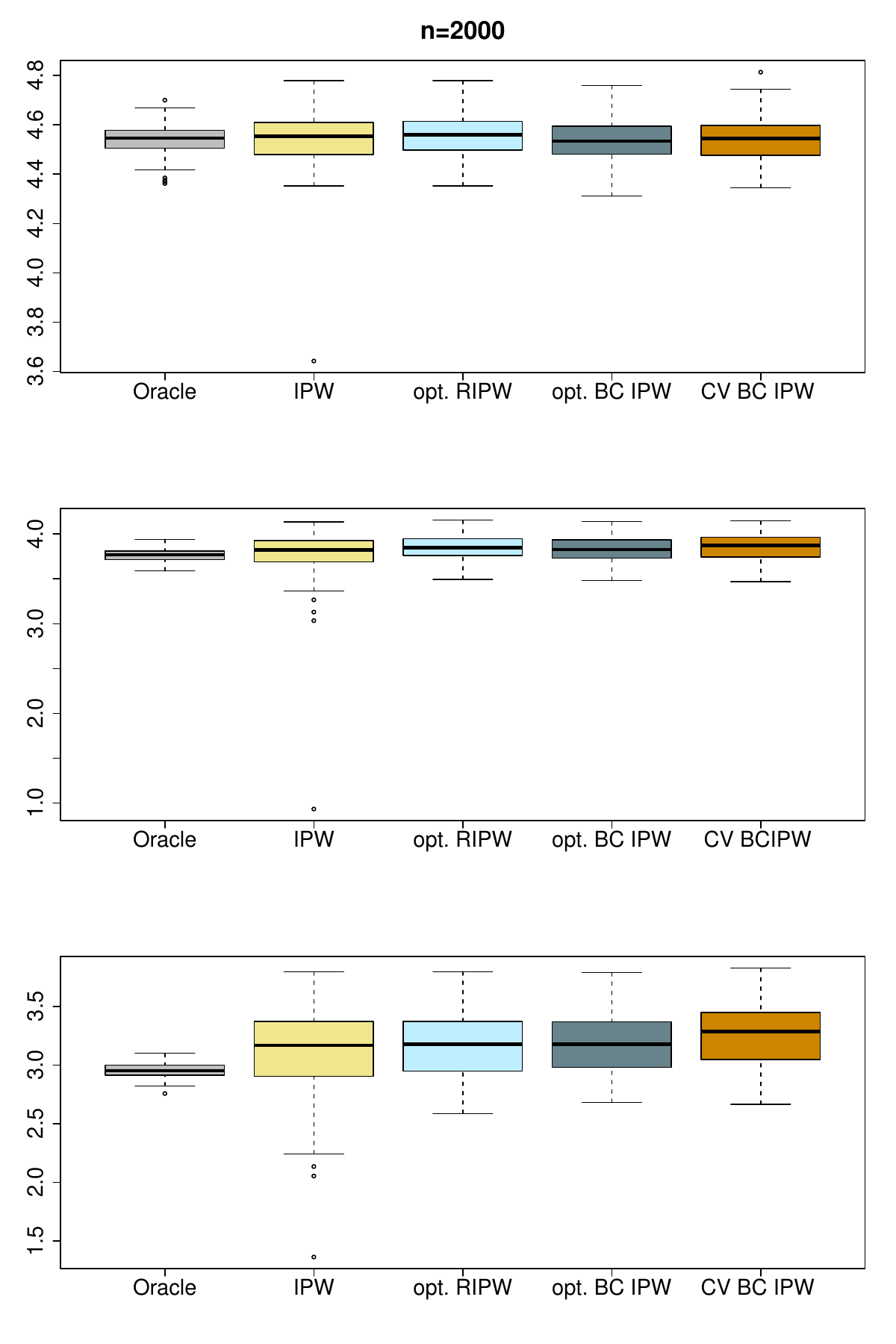}
    \end{minipage}
    \hfill
    \begin{minipage}{0.24\textwidth}
        \centering
        \includegraphics[width=\textwidth]{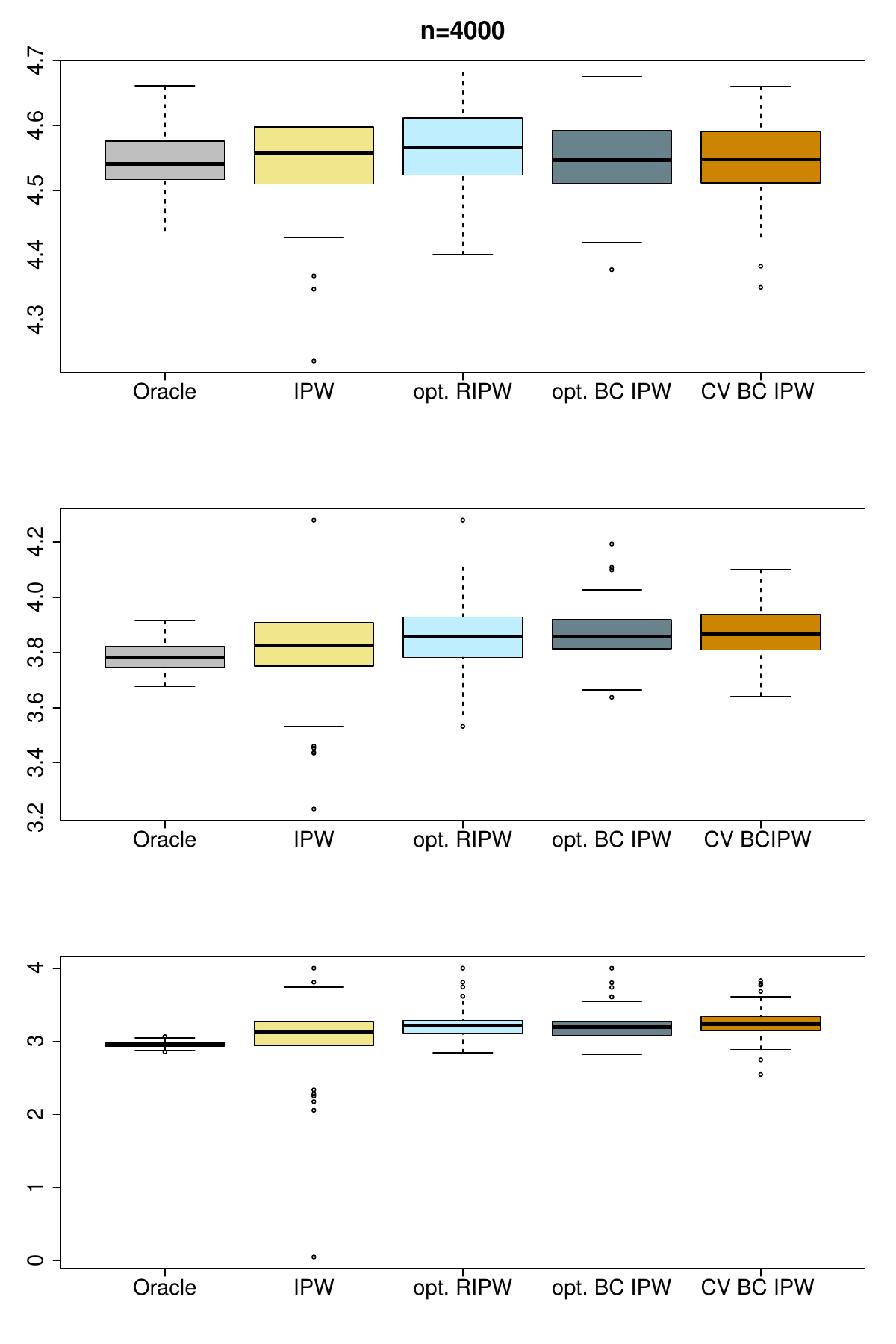}
    \end{minipage}
    \hfill
    \begin{minipage}{0.24\textwidth}
        \centering
        \includegraphics[width=\textwidth]{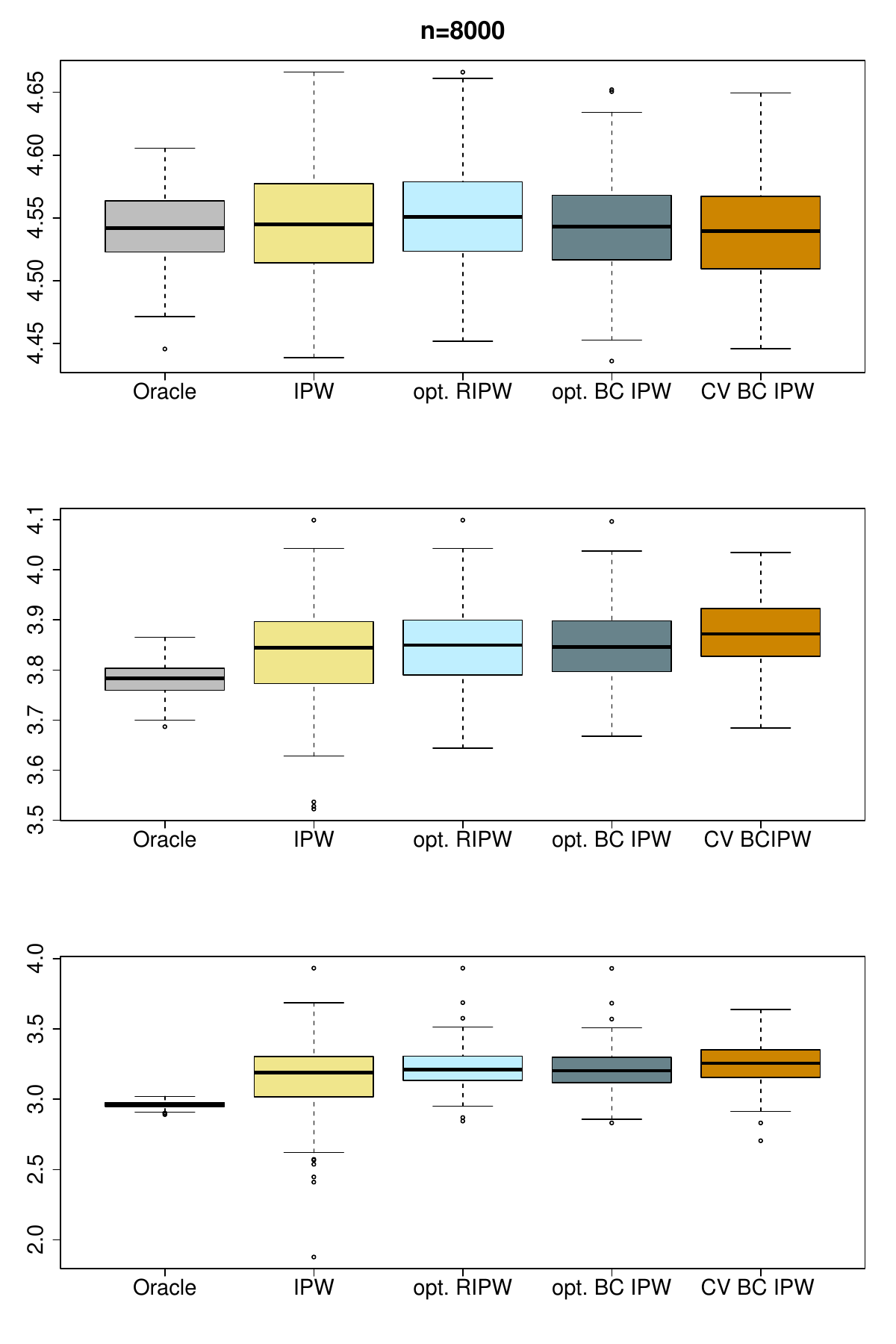}
    \end{minipage}
    \renewcommand{\baselinestretch}{1}
    \caption{ \footnotesize
    Estimators of the $0.9$-quantile of $Y(1)$ based on model $(a)$ i.e. Gaussian responses. The sample size increases from left to right, while the tails of the inverse propensity scores become heavier from top to bottom. 
           The boxplots show 100 realizations of the oracle estimator that sees all the $Y_i(1)$'s (light gray), the standard IPW estimator (yellow), the truncated IPW estimator that achieves the best empirical MSE performance (light blue), the debiased truncated IPW estimator that achieves the best empirical MSE performance (dark blue)  and the debiased truncated IPW estimator with our proposed bootstrap data-driven choice of the truncation parameter (dark orange). }\label{fig:Q1_Gaussian}
\end{figure}
Note that the limiting conditional distribution \eqref{e:small.p} is $N(1,1)$ for model $(a)$ and Fr{\'e}chet with shape parameter $\gamma=3$ for model $(b)$. We estimate their corresponding $\tau$-quantiles of $Y_i$ which is equivalent to estimating $q_1(\tau)$ based on the estimator $\hat{q}_n(\tau)$ defined in  \eqref{e:argmin.n} using estimated propensity scores $e(X_i,\hat\beta)$ obtained by fitting a logistic regression.  We find a data driven truncation parameter $b_n$ using the procedure described in Section \ref{sec:data_driven_selection} with an exponentially increasing grid of $100$ candidate parameters ranging  from $b_{n,1}=\min_{1\leq i \leq n}e(X_i,\hat\beta)$ to $b_{n,L}=\max_{1\leq i \leq n}e(X_i,\hat\beta)$. We see from Figures \ref{fig:Q1_Gaussian} and \ref{fig:Q1_Frechet}  that the benefits of truncation in terms of MSE are more pronounced when the inverse propensity scores are very heavy-tailed.

\begin{figure}[ht]
    \centering
    \renewcommand{\baselinestretch}{1}
    \begin{minipage}{0.24\textwidth}
        \centering
        \includegraphics[width=\textwidth]{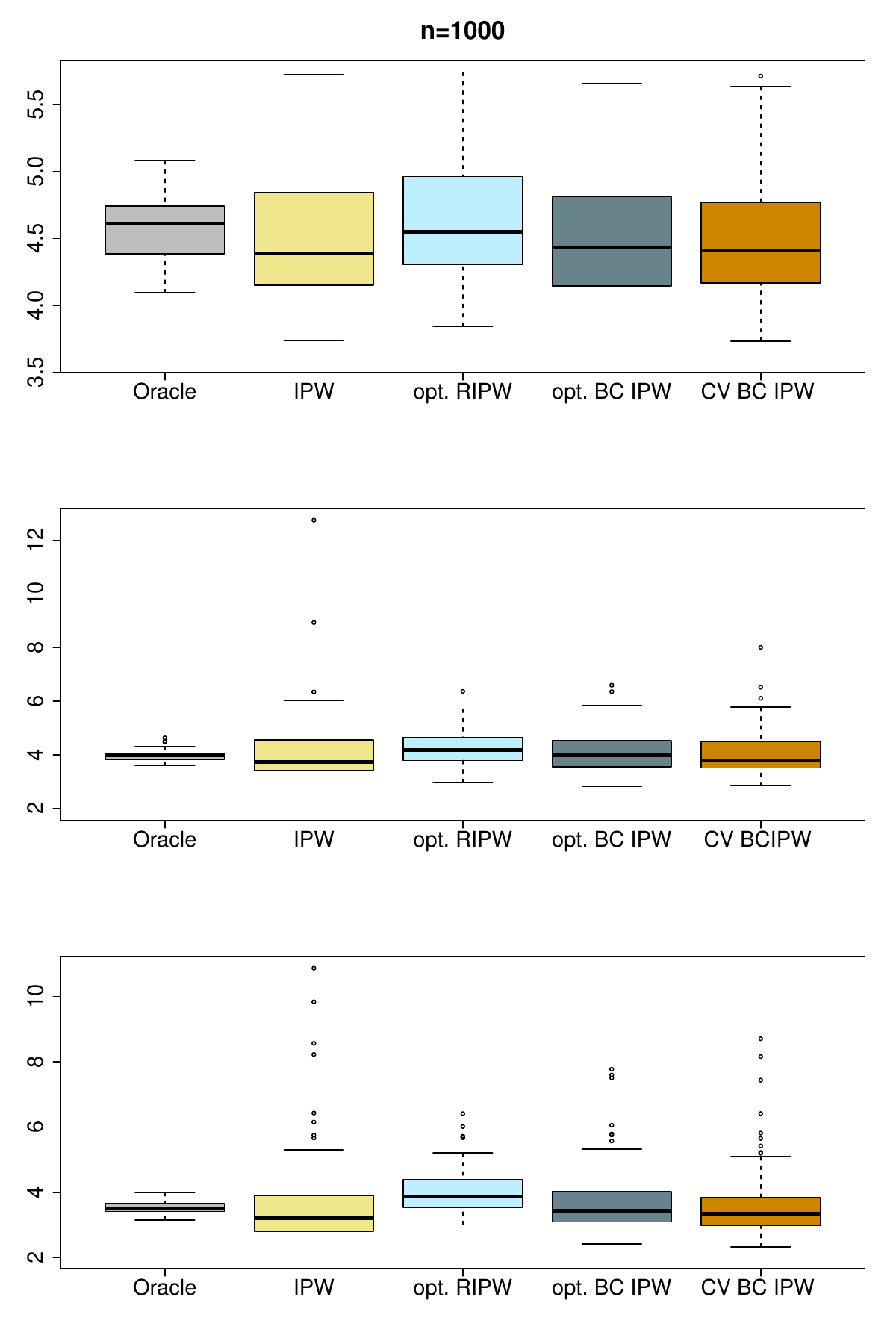}
    \end{minipage}
    \hfill
    \begin{minipage}{0.24\textwidth}
        \centering
        \includegraphics[width=\textwidth]{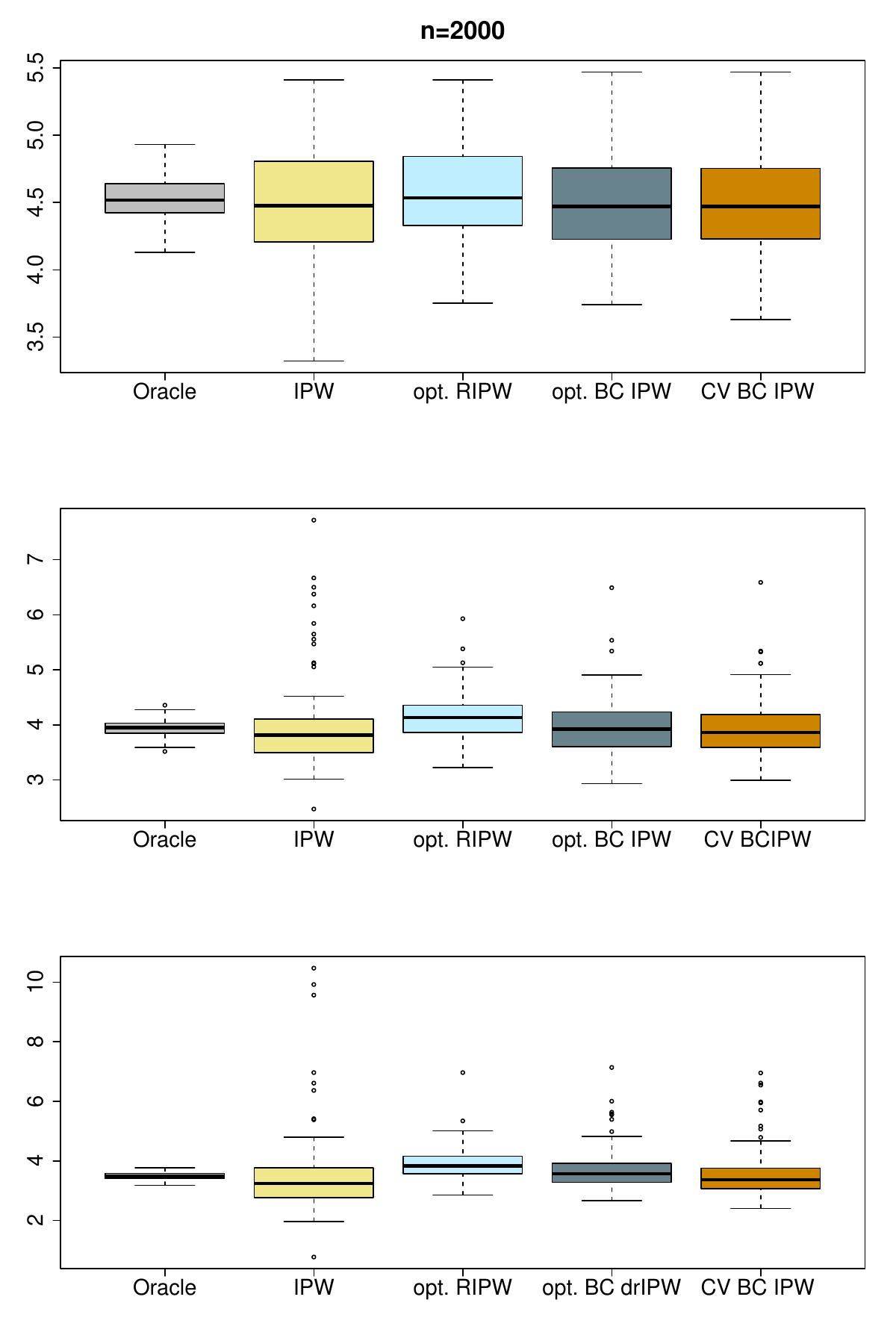}
    \end{minipage}
    \hfill
    \begin{minipage}{0.24\textwidth}
        \centering
        \includegraphics[width=\textwidth]{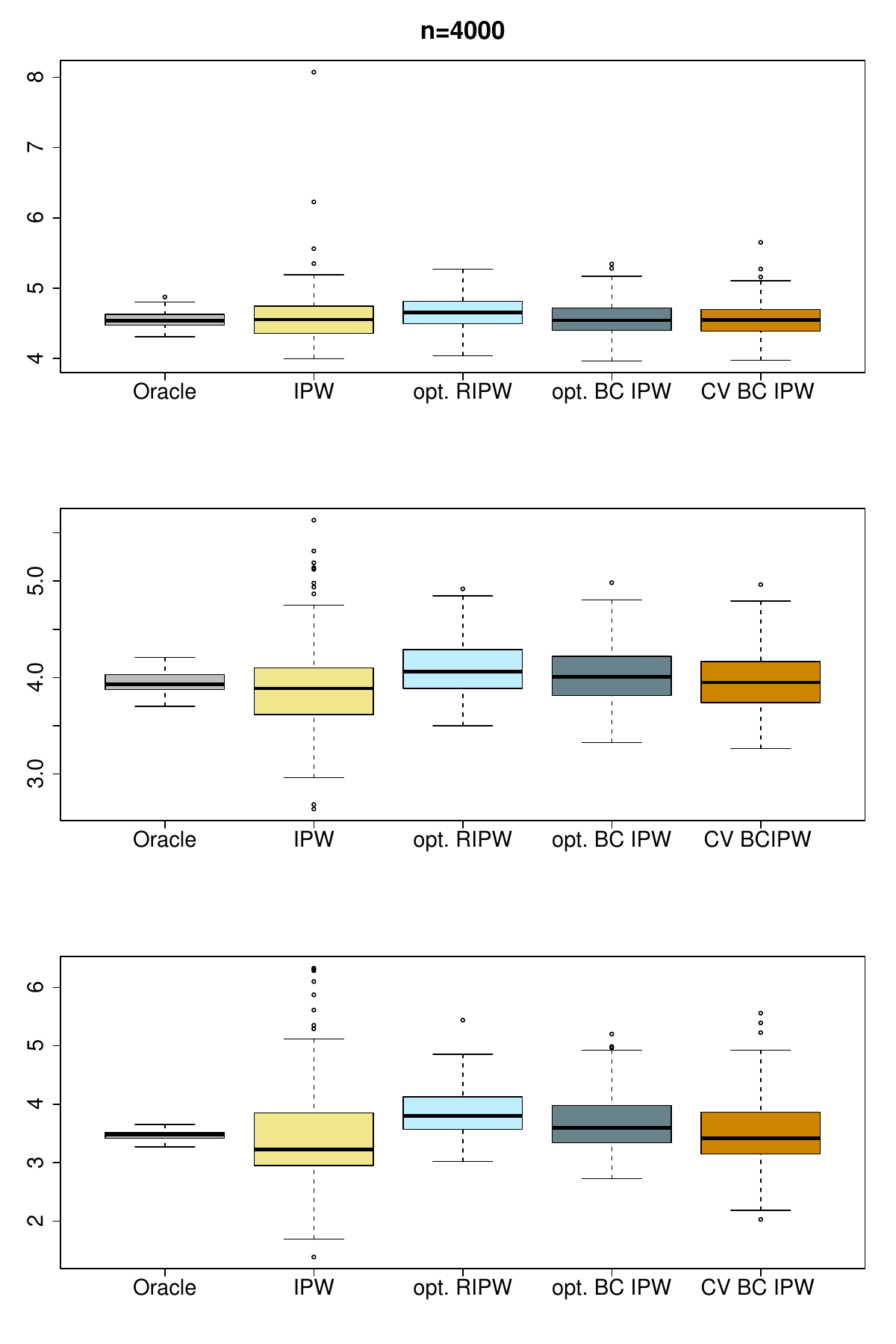}
    \end{minipage}
    \hfill
    \begin{minipage}{0.24\textwidth}
        \centering
        \includegraphics[width=\textwidth]{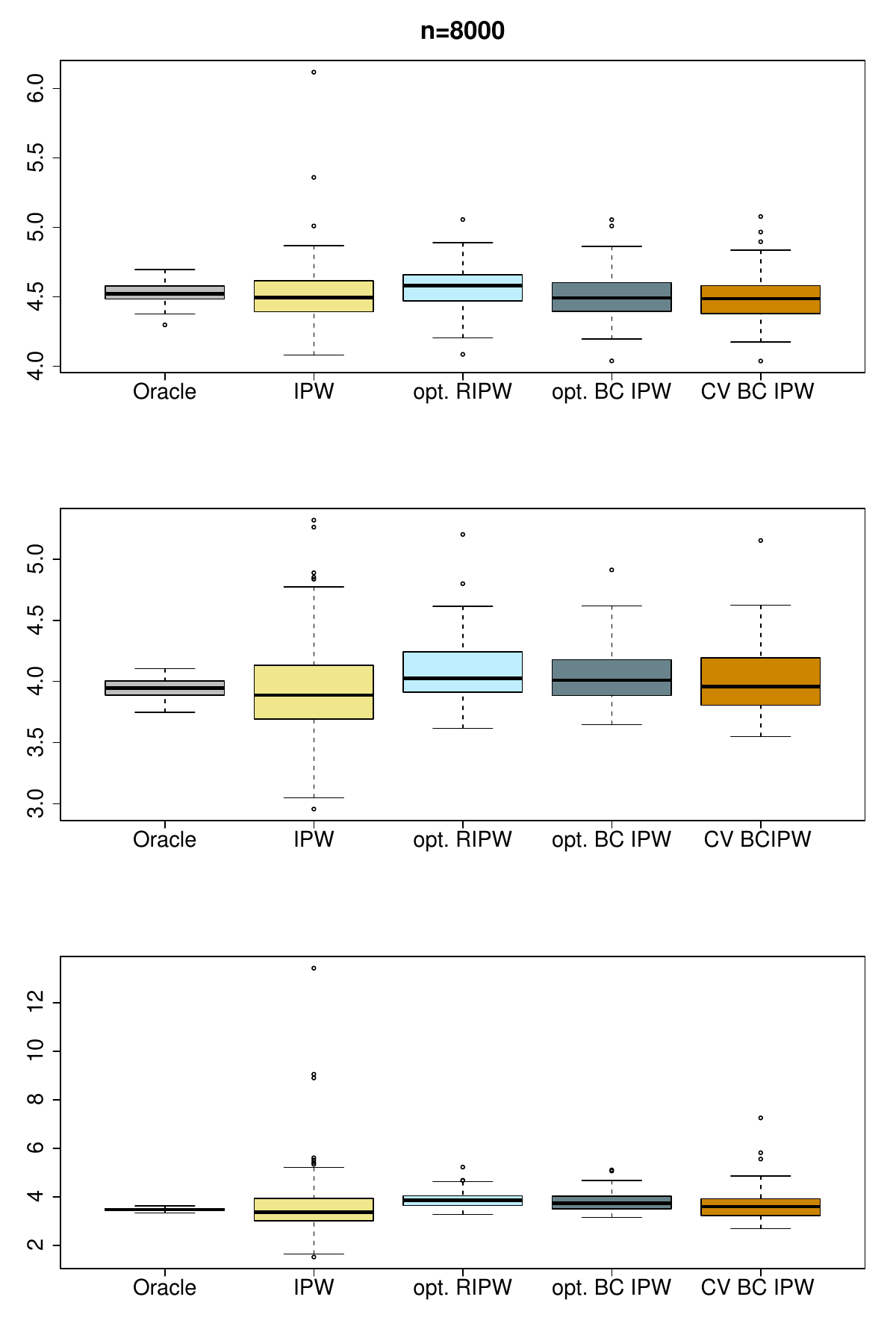}
    \end{minipage}
   \renewcommand{\baselinestretch}{1} \caption{  
           \footnotesize  Estimators of the $0.9$-quantile of $Y(1)$ based on model $(b)$ i.e. Fr\'echet responses. The description give for Figure \ref{fig:Q1_Gaussian} also applies here.  
           }
    \label{fig:Q1_Frechet}
\end{figure}

\section{Empirical illustration}\label{sec:empirical}

We revisit the National Supported Work (NSW) program data that has been analyzed in multiple papers since the work of \cite{lalonde1986}, including \cite{dehejia:wahba1999,dehejia:wahba2002,ma:wang2020}.  The NSW  dataset consists of data from a labor training program implemented in 1970s that provided work experience to a group of selected individuals. We follow the data preprocessing of \cite{ma:wang2020}. Therefore our sample consists of the treated individuals in the NSW experimental group ($D = 1$, sample size $n_1 = 185$), and a nonexperimental control group from the Panel Study of Income Dynamics (PSID, $D = 0$, sample size $n_0 = 1157$). The outcome response variable is the post-intervention earning measured in 1978. The covariates include information on age, education, marital status, ethnicity, and earnings in 1974 and 1975. Interested readers can find more details about the definition of these variables in \cite{dehejia:wahba1999,dehejia:wahba2002}.
\begin{figure}[h]
     \centering
     \includegraphics[scale=.4]{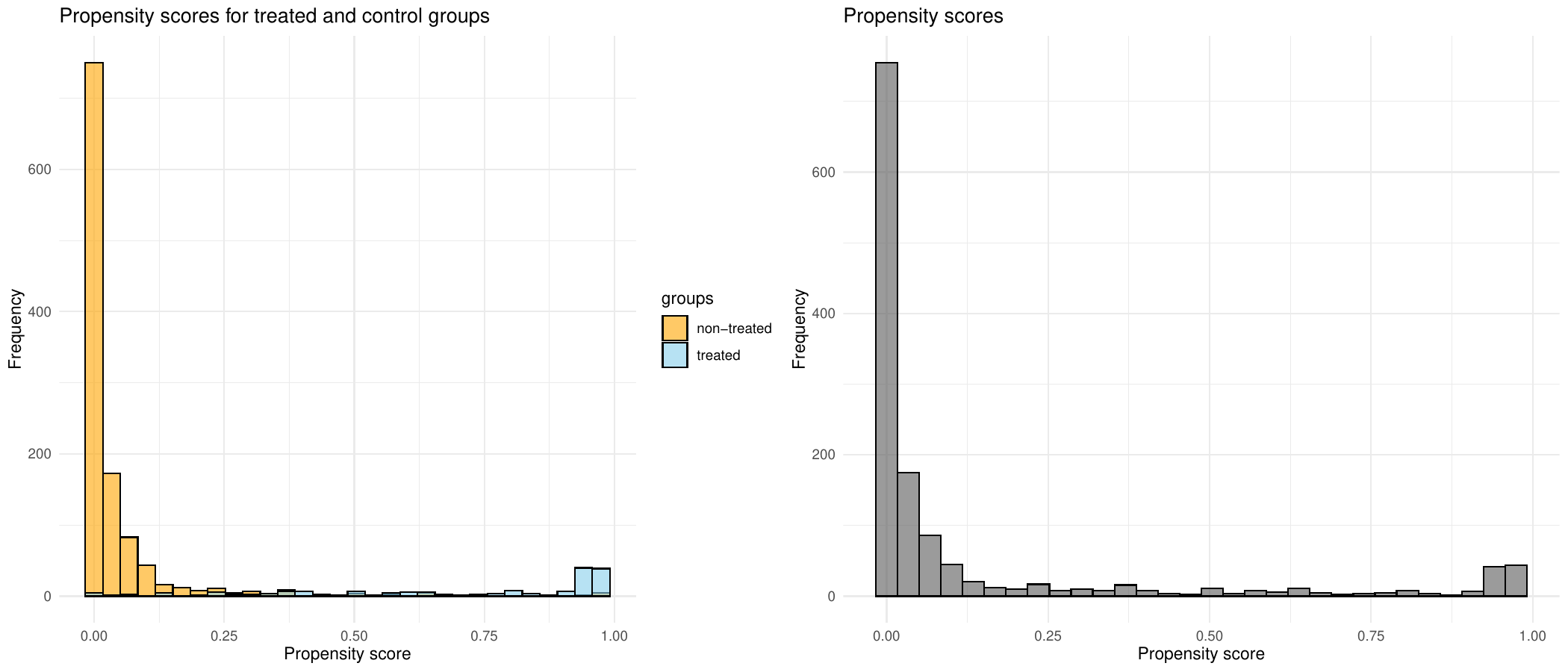}           \caption{ {\footnotesize Distribution of estimated propensity score for NSW-PSID data.}}
     \label{fig:Lalonde_prop}
 \end{figure}
 \begin{figure}[h]
     \centering
\includegraphics[scale=.37]{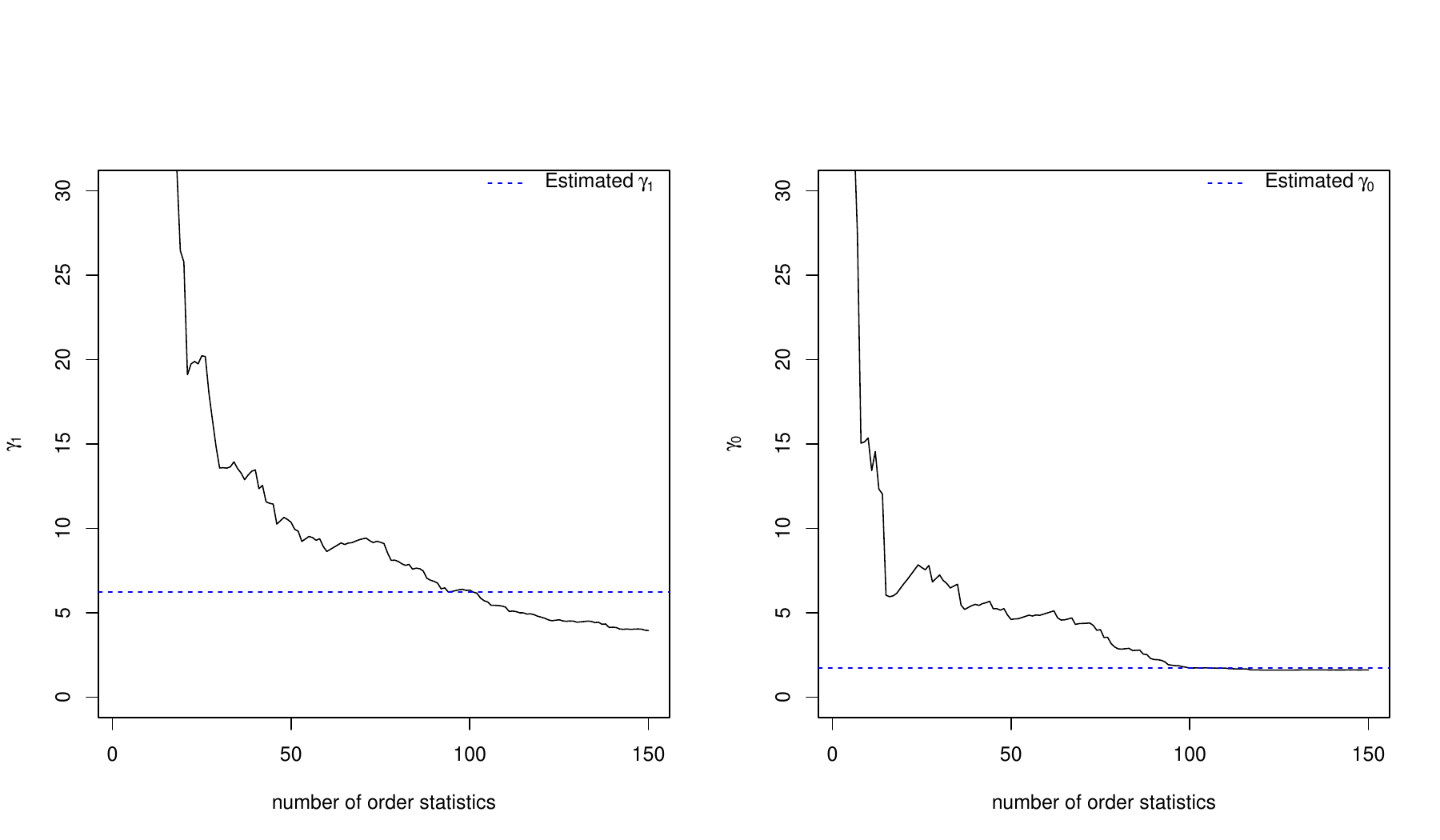}   
\renewcommand{\baselinestretch}{1}\caption{ {\footnotesize  Hill estimates of $\gamma_1$ and  $\gamma_0$ based on $\{1/\hat{e}(X_i)\}$ and $\{1/(1-\hat{e}(X_i))\}$ respectively.}}
\label{fig:Lalonde_Hill_plot}
 \end{figure}

Figure \ref{fig:Lalonde_prop} shows the poor estimated overlap obtained with the estimated logistic regression reported in \cite{ma:wang2020} i.e., taking the variables \texttt{age, education, earn1974, earn1975}  and their squares as covariates, 
 as well as three dummy variables 
 for \texttt{married, black, hispanic} and an interaction term \texttt{black $\times$ u74}, where \texttt{u74} is the unemployment status in 1974. Using these propensity scores we can estimate the tail indexes of  $\{1/\hat{e}(X_i)\}$ or $\{1/(1-\hat{e}(X_i))\}$ using a Hill estimator. Note that $\gamma_1$ and $\gamma_0$ are $1$ plus the corresponding regular variation index of $\{1/\hat{e}(X_i)\}$ or $\{1/(1-\hat{e}(X_i))\}$ and respectively. Selecting the number of order statistics $k$ of the Hill estimator using  the minimum distance order procedure proposed in \cite{clauset:shalizi:newman2009} and analyzed in \cite{drees:janssen:resnick:wang2020}, we obtain $\hat{\gamma}_1\approx 7.958$ and $\hat{\gamma}_0\approx 4.343$. However, the Hill plots in Figure \ref{fig:Lalonde_Hill_plot} based on $100$ order statistics, corresponding to $7.5$\% of the data, suggest  that $\hat\gamma_1\approx 6.23$ and $\hat\gamma_0\approx1.73$ are reasonably good estimators. This indicates that the standard Gaussian limits for the estimated quantiles of $Y(1)$ do not apply. This would theoretically be reflected in a larger variance of $\hat{q}_0(\tau)$ relative to $\hat{q}_1(\tau)$, but  here the much smaller number of treated units makes $\hat{q}_1(\tau)$ have the larger variance.  

 \begin{figure}[h]
     \centering
\renewcommand{\baselinestretch}{1}
     \includegraphics[scale=.45]{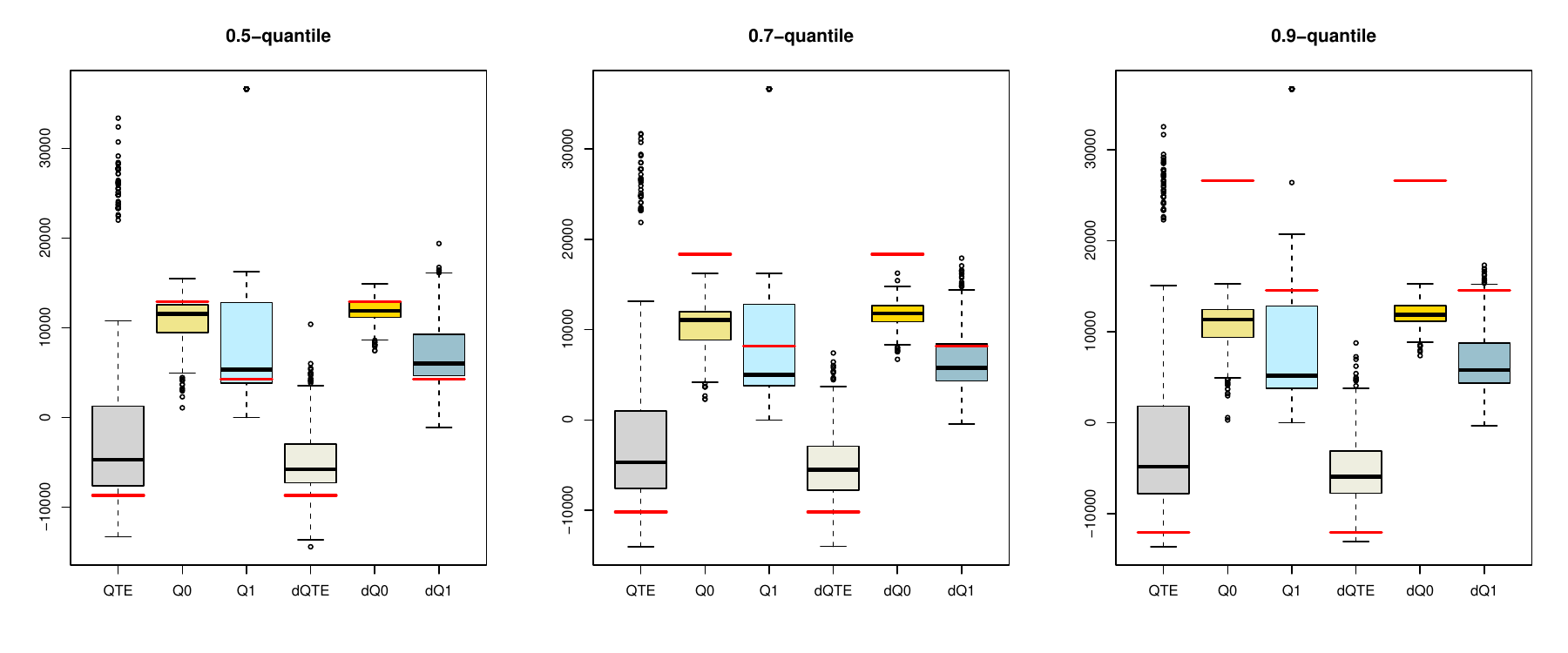}           \caption{ {\footnotesize The plot  shows the subsampled distribution of the estimated QTE and the corresponding quantiles of $Y(1)$ and $Y(0)$ for the quantile levels $\tau=\{0.5,0.7,0.9\}$, using the NSW-PSID data. The boxplots shows $500$ subsampled realizations of size $\lfloor n/\log(n)\rfloor$ the standard IPW estimators and our  debiased and truncated counterparts where the truncation parameter was chosen using the procedure described in Section \ref{sec:data_driven_selection}. The red lines show the corresponding
naive unweighted empirical quantile estimates using all the data.}}
\label{fig:Lalonde_QTE}
 \end{figure}

 Figure \ref{fig:Lalonde_QTE} shows the bootstrap subsampled distribution of the estimated quantile treatment effect $\delta(\tau)=q_1(\tau)-q_0(\tau)$ for $\tau=\{0.5,0.7,0.9\}$ over $500$ subsamples of size $n_B=\lfloor \frac{n}{\log n}\rfloor$. Indeed, it is well known that the standard bootstrap fails to give valid inference for the sample mean when the variance is infinite \citep{athreya1987,knight1989} and subsampling is a well-established procedure that can overcome this shortcoming \citep{arcones:gine1989,politis:romano1994,athreya:lahiri:wu1998,romano:wolf1999}. Subsampling was also considered in the context of extreme inverse probability weighting in \cite{ma:wang2020,heiler:kazak2021}.  Figure \ref{fig:Lalonde_QTE} also shows of our debiased truncated IPW quantile estimator, where the truncation parameter was estimated using the three step procedure decribed in Section \ref{sec:data_driven_selection}. It shows that as suggested in the simulations of the previous section, the  truncated estimator has a smaller variance than the standard IPW estimator. Furthermore the resulting truncated estimates tend to be closer to the benchmark fully experimental data showed in purple.  
  \begin{figure}[h]
     \centering
     \includegraphics[scale=.45]{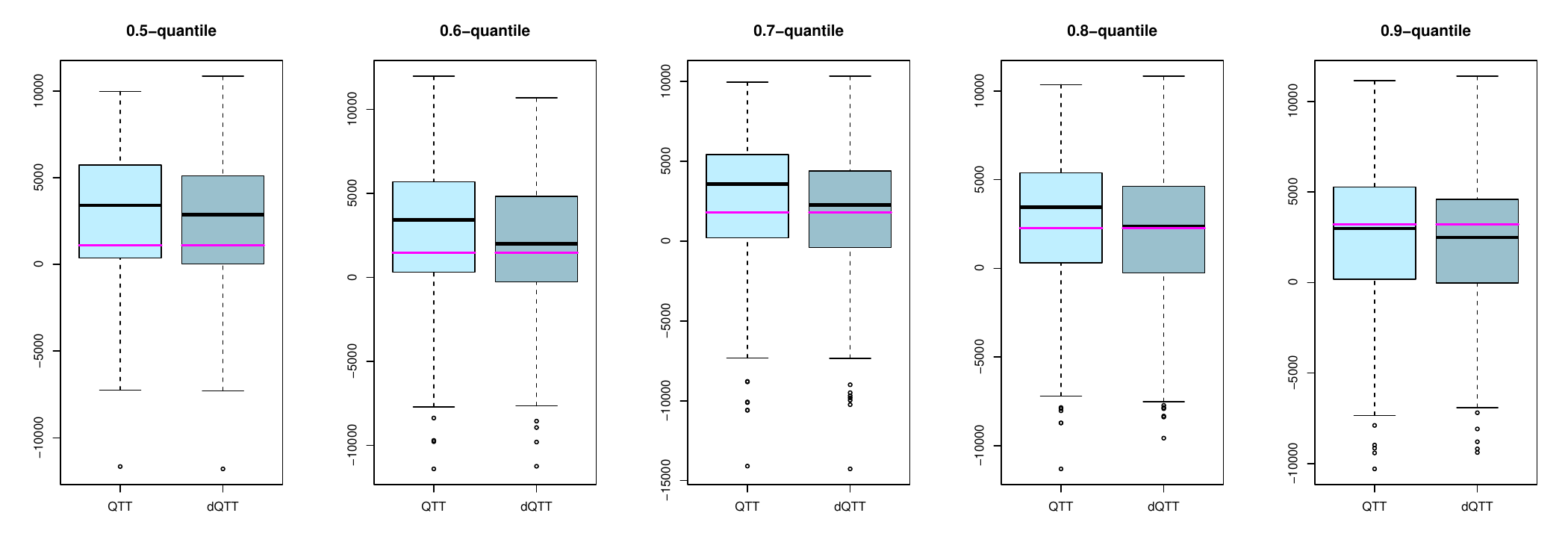}   
     \renewcommand{\baselinestretch}{1}\caption{ {\footnotesize Subsampled distributions of the estimated QTT using the standard IPW approach and our debiased estimator based on a data-driven selection of the truncation parameter.  The boxplots shows $500$ subsampled realizations of size $\lfloor n/\log(n)\rfloor$. The purple lines show the corresponding treatment effect based on the benchmark purely experimental NSW data.}}
     \label{fig:Lalonde_QTT}
 \end{figure}
 Finally, Figure \ref{fig:Lalonde_QTT} shows the estimated quantile treatment to the treated (QTT) based on standard IPW estimators and our data-driven truncated and debiased approach. Letting $q_{Y(j)|D=1}(\tau)$ denote the $\tau$-quantile of $Y(j)|D=1$ for $j\in\{0,1\}$, the QTT is defined as $\mathrm{QTT}(\tau)=q_{Y(1)|D=1}(\tau)-q_{Y(0)|D=1}(\tau)$. Note that the standard IPW estimator of the QTT is 
 \begin{equation*}
     \widehat{\mathrm{QTT}}^{\mathrm{IPW}}(\tau)=\hat{q}_{Y(1)|D=1}(\tau)-\hat{q}^{\mathrm{IPW}}_{Y(0)|D=1}(\tau),
 \end{equation*}
where 
\begin{equation*}
  \hat{q}_{Y(1)|D=1}(\tau)=\mbox{argmin}_q\sum_{i:D_i=1}\rho_\tau(Y_i-q) \quad \mbox{ and } \quad  \hat{q}^{\mathrm{IPW}}_{Y(0)|D=1}(\tau)=\mbox{argmin}_q\sum_{i:D_i=0}\frac{e(X_i)}{1-e(X_i)}\rho_\tau(Y_i-q).
\end{equation*}
In Figure \ref{fig:Lalonde_QTT} we report the IPW QTT estimator with estimated propensity scores $\hat{e}(X_i)$ and a truncated and debiased counterpart where truncation is applied to $\frac{1}{1-\hat{e}(X_i)}$.  The goal of this exercise is to assess how well these methods estimate the treatment effect relative to the benchmark purely experimental NSW data. Similar numerical illustrations have been considered in the literature, but for the average treatment to the treated (ATT) \citep{dehejia:wahba1999,dehejia:wahba2002,ma:wang2020}. This exercise reveals that overall, the estimated QTT based on the NSW-PSID data is close to the randomized benchmark NSW quantiles and that the estimates based on truncation seem to be slightly better than the standard IPW estimates in this case.

\section{Appendix A: Proof of main results}
\label{sec:appendix}

\subsection{Fixed quantiles}

We continue in the notation established in Remark \ref{rk:structure}.

We now switch to analyzing the limit behavior of the
processes $(Z^{(1)}_{n,\tau}(u), \,
u\in\bbr)$, $0<\tau<1$, in \eqref{e:split.Zn}.   The following proposition establishes, for fixed numbers $\tau_j, \, j=1,\ldots, k$  in $(0,1)$,  a multivariate
limit theorem for the random ingredients in these processes. 
Since each of the processes $(Z^{(1)}_{n,\tau_j}(u), \,
u\in\bbr)$  is linear in $u$, the joint limiting
behavior of these processes is completely determined in this proposition.

We now put together Propositions \ref{pr:z2} and \ref{pr:z1} to complete the proof of Theorem \ref{th:Z1}.

\begin{proposition} \label{pr:z2}
 If the conditions of Theorem \ref{th:Z1} hold for a  $\tau\in (0,1)$,  then, for $\gamma_1\in(1,2)$
  $$
  Z_{n,\tau}^{(2)}(u)\cip \frac{u^2}{2}g_Y((q(\tau))~~\mbox {as} \ n\to\infty\,, 
  $$
  where $Z_{n,\tau}^{(2)}(u)$ is as defined in \eqref{e:split.Zn}.  For the case $\gamma_1> 2$ or if $\gamma_1=2$ and $\E(1/e(X_1))<\infty$, then the same result holds with $Z_{n,\tau}^{(2)}(u)$ where $h_n$ is now $n^{1/2}$.
  
\end{proposition}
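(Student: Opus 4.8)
~

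\medskip

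\noindent\textbf{Plan of proof.}
The plan is to compute the conditional expectation of $Z^{(2)}_{n,\tau}(u)$ given the $(e(X_i),D_i)$, show it converges to $(u^2/2)g_Y(q(\tau))$, and then show the conditional variance vanishes, so that a conditioning argument (e.g. Chebyshev applied conditionally, followed by dominated convergence) yields convergence in probability. Throughout I assume, without loss of generality, $u>0$; the case $u<0$ is handled by the obvious sign-flip in the integral $\int_0^{uh_n/n}$, and $u=0$ is trivial.

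\medskip

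\noindent First I would write $Z^{(2)}_{n,\tau}(u) = \frac{n}{h_n^2}\sum_{i=1}^n W_{n,i} \int_0^{uh_n/n}\bigl(\one(Y_i\le q(\tau)+s)-\one(Y_i\le q(\tau))\bigr)\,ds$ with weights $W_{n,i}=D_i/\max(e(X_i),b_n)$, which are independent of $Y_i$ conditionally on $e(X_i)$ by Assumption~\ref{ass:unconfoundedness}. Taking conditional expectation given $\mathcal{F}_n=\sigma(e(X_i),D_i:\,i\le n)$, and using \eqref{e:small.p} together with the density assumption \eqref{e:dens,tau} — more precisely, the conditional probability $\bbP(q(\tau)<Y_1\le q(\tau)+s\mid e(X_1)=z)$ is, for small $s$, asymptotically $s\,g_Y(q(\tau))$ uniformly over $z$ near $0$ by \eqref{e:small.p1}/\eqref{e:small.p-2} and over $z$ bounded away from $0$ by ordinary continuity of the conditional density — I get $\int_0^{uh_n/n}\bbP(q(\tau)<Y_i\le q(\tau)+s\mid e(X_i))\,ds = (1+o(1))\,g_Y(q(\tau))\,\frac{u^2 h_n^2}{2n^2}$ on the relevant range, with the $o(1)$ uniform in $i$. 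Hence $\E[Z^{(2)}_{n,\tau}(u)\mid\mathcal{F}_n] = (1+o(1))\,g_Y(q(\tau))\,\frac{u^2}{2}\cdot\frac{1}{n}\sum_{i=1}^n W_{n,i}$. It then remains to show $\frac{1}{n}\sum_{i=1}^n W_{n,i}\to 1$ in probability: its mean is $\E[D_1/\max(e(X_1),b_n)] = \E[e(X_1)/\max(e(X_1),b_n)] = 1 - \E[(1-e(X_1)/b_n)\one(e(X_1)\le b_n)] \to 1$ since $\bbP(e(X_1)\le b_n)\to 0$, and concentration follows because its variance is $n^{-1}\E[D_1/\max(e(X_1),b_n)^2]=n^{-1}\int_0^1 (\max(z,b_n))^{-1}\,\LFe(dz)$, which by Lemma~\ref{e:regvar.est}, \eqref{e:big.z} with $\beta=1$ is $O(n^{-1}b_n^{-1}\bbP(e(X_1)\le b_n))=O((nb_n\bbP(e(X_1)\le b_n))^{-1}\cdot b_n^{-2}\bbP(e(X_1)\le b_n)^2\cdot\dots)$ — more cleanly, $n^{-1}\int_{b_n}^1 z^{-1}\LFe(dz)+n^{-1}b_n^{-1}\bbP(e(X_1)\le b_n)$, and using \eqref{e:key.lim} the second term is $O(\theta^{\gamma_1}/(nb_n^2))\to 0$ while the first is $O((\log(1/b_n))/n)\to 0$ (or one uses \eqref{e:big.z} directly); in the case $\theta=0$ or $b_n=0$ one instead uses $\E[1/e(X_1)^2]$-type bounds via \eqref{e:big.z}, noting $n/h_n^2\to 0$ is not quite what's needed but $h_n^{-2}\int_{b_n}^1 z^{-1}\,\LFe(dz)\to$ const provides the correct normalization — here I would simply invoke Lemma~\ref{e:regvar.est} to control the integral.

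\medskip

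\noindent Next I would handle the conditional variance of $Z^{(2)}_{n,\tau}(u)$. By conditional independence of the summands given $\mathcal{F}_n$, $\Var(Z^{(2)}_{n,\tau}(u)\mid\mathcal{F}_n) = \frac{n^2}{h_n^4}\sum_{i=1}^n W_{n,i}^2\,\Var\bigl(\int_0^{uh_n/n}(\one(Y_i\le q(\tau)+s)-\one(Y_i\le q(\tau)))\,ds\,\big|\,e(X_i)\bigr)$. The inner integral is bounded in absolute value by $uh_n/n$, so its conditional variance is at most $(uh_n/n)\cdot\E[|\cdots|\mid e(X_i)] \le (uh_n/n)^2\,\bbP(|Y_i-q(\tau)|\le uh_n/n\mid e(X_i))$, which by \eqref{e:small.p1} (resp. the ordinary density bound for $z$ away from $0$) is $o((h_n/n)^2)$ uniformly, or $O((h_n/n)^3)$ under \eqref{e:small.p-2}. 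This gives $\Var(Z^{(2)}_{n,\tau}(u)\mid\mathcal{F}_n) \le \frac{n^2}{h_n^4}\cdot o\!\left(\frac{h_n^2}{n^2}\right)\sum_i W_{n,i}^2 = o(1)\cdot h_n^{-2}\sum_i W_{n,i}^2$, and $h_n^{-2}\sum_i W_{n,i}^2 = h_n^{-2}\sum_i D_i/\max(e(X_i),b_n)^2$ is $O_P(1)$ — indeed its expectation is $n h_n^{-2}\int_0^1 (\max(z,b_n))^{-1}\,\LFe(dz)$, which by Lemma~\ref{e:regvar.est}, \eqref{e:big.z} with $\beta=1$ and the definition \eqref{e:h_n} of $h_n$ (note $h_n^{-2}\cdot h_n\bbP(e(X_1)\le h_n^{-1})\sim h_n^{-1}$, and $b_n\approx\theta/h_n$) converges to a finite constant. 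Hence the conditional variance is $o_P(1)$, and combined with the conditional-mean computation, $Z^{(2)}_{n,\tau}(u) - (u^2/2)g_Y(q(\tau))\to 0$ in probability by a conditional Chebyshev inequality and bounded convergence.

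\medskip

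\noindent The main obstacle, I expect, is the uniformity bookkeeping in the conditional expectation: one must split the covariate range into $\{e(X_i) \text{ near } 0\}$, where the replacement $\bbP(q(\tau)<Y_i\le q(\tau)+s\mid e(X_i))\approx s\,g_Y(q(\tau))$ is justified by \eqref{e:small.p1} or \eqref{e:small.p-2}, and $\{e(X_i)\ge\delta\}$ for fixed small $\delta>0$, where it follows from continuity of the ordinary density of $Y_i$ together with continuity of the conditional densities — and then verify that the error terms, once multiplied by $\frac{n}{h_n^2}\sum_i W_{n,i}$ or by $\frac{n}{h_n^2}\sum_i W_{n,i}\one(e(X_i)\le\delta)$, still vanish; the latter requires that the contribution of small-propensity-score observations to $\frac1n\sum W_{n,i}$ is negligible, which is exactly \eqref{e:key.lim}. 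The Gaussian case $\gamma_1>2$ (or $\gamma_1=2$ with $\E[1/e(X_1)]<\infty$) is identical once $h_n$ is replaced by $n^{1/2}$: the only change is that the relevant moment $\E[1/e(X_1)^2]$ or $\E[1/e(X_1)]$ is now finite, so $n^{-1}\sum_i W_{n,i}\to 1$ and $n^{-1}\sum_i W_{n,i}^2\to\E[D_1/e(X_1)^2]<\infty$ by the law of large numbers, and the same conditional-mean/variance estimates close the argument with $h_n^2$ replaced by $n$.
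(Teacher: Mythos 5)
Your overall strategy (show the mean converges to $\tfrac{u^2}{2}g_Y(q(\tau))$, then kill the fluctuations) matches the paper's, but two steps as written would fail under the stated hypotheses. First, your conditional-expectation computation rests on the claim that $\bbP(q(\tau)<Y_1\le q(\tau)+s\mid e(X_1)=z)\sim s\,g_Y(q(\tau))$ uniformly in $z$. Nothing in the assumptions gives this: \eqref{e:small.p1} only says this conditional probability tends to $0$ uniformly for $z$ near $0$, while \eqref{e:small.p} says the conditional laws converge to an \emph{arbitrary} limit $G_0$ as $z\to 0$ (whose density at $q(\tau)$, if any, need not be $g_Y(q(\tau))$), and for $z$ bounded away from $0$ no regularity of conditional densities is assumed at all. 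Consequently $\E[Z^{(2)}_{n,\tau}(u)\mid\mathcal F_n]$ does not factor as $(1+o(1))g_Y(q(\tau))\tfrac{u^2}{2}\cdot\tfrac1n\sum_iW_{n,i}$. The paper avoids this entirely by working with the unconditional expectation: on $\{e(X_1)>b_n\}$ the weight has conditional mean $z\cdot z^{-1}=1$, so integrating over $z$ reproduces the \emph{unconditional} law of $Y_1$, where \eqref{e:dens,tau} applies; the contribution of $\{e(X_1)\le b_n\}$ is then shown to vanish using only \eqref{e:key.lim} and \eqref{e:small.p1}.

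Second, and more seriously, your second-moment bound does not close when $\theta=0$, in particular in the no-truncation case $b_n=0$ that the theorem explicitly allows. One computes $\E\bigl[h_n^{-2}\sum_iW_{n,i}^2\bigr]\asymp nh_n^{-2}b_n^{-1}\bbP(e(X_1)\le b_n)\asymp (h_nb_n)^{\gamma_1-2}$, which converges to $\theta^{\gamma_1-2}<\infty$ only when $\theta>0$; for $\theta=0$ it diverges, and for $b_n=0$ one has $\E[D_1/e(X_1)^2]=\E[1/e(X_1)]=\infty$ whenever $\gamma_1<2$, so neither $h_n^{-2}\sum_iW_{n,i}^2=O_P(1)$ nor your Chebyshev argument for $\tfrac1n\sum_iW_{n,i}\to1$ is available. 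Your parenthetical fix ("$h_n^{-2}\int_{b_n}^1z^{-1}\LFe(dz)\to$ const") is not correct in that regime. This is exactly why the paper abandons variance bounds when $\theta=0$ and instead applies the Marcinkiewicz--Zygmund inequality with an exponent $p\in(1,\gamma_1)$ (for which $\int_0^1z^{1-p}\LFe(dz)<\infty$), paired with the strengthened local condition \eqref{e:small.p-2}. Your treatment of the Gaussian case $\gamma_1>2$ (or $\gamma_1=2$ with $\E[1/e(X_1)]<\infty$) is essentially sound and close to the paper's, but the $\gamma_1\in(1,2)$, $\theta=0$ branch needs the $L^p$, $p<\gamma_1$, argument to be repaired.
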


\begin{proposition} \label{pr:z1}
Let $\tau_1,\ldots, \tau_k$ be distinct points in $(0,1)$, such that 
 the conditions of Theorem \ref{th:Z1} hold at these points. 
 
 \smallskip 
    \noindent  {\rm Case 1: $\gamma_1\in (1,2)$.} As $n\to\infty$, 
  $$
  \left( h_n^{-1} \sum_{i=1}^n \frac{D_i}{\max(e(X_i),b_n)}
  \bigl( \tau_j-\one(Y_i\leq q(\tau_j))\bigr), \, j=1,\ldots, k\right) 
 \Rightarrow \bigl( Z_{\tau_j}, \, j=1,\ldots, k\bigr)
$$
weakly in $\bbr^k$, where $\bigl( Z_{\tau_1},\ldots, Z_{\tau_k}\bigr)$ is an
infinitely divisible random vector 
without a Gaussian component, with the characteristic function
$$
\E\exp\left\{ i\sum_{j=1}^k a_j Z_{\tau_j}\right\} = \exp\left\{
  i\sum_{j=1}^k a_j
  m_{\tau_j} + \int_{\bbr^k} \left(e^{i\sum_{j=1}^k a_j x_j} 
  -1 -i\sum_{j=1}^k a_j x_j\right)\, \nu_{\tau_1,\ldots, \tau_k}(d\bx)\right\}, 
$$
$(a_1,\ldots, a_k)\in\bbr^k$.   In the case $\theta >0$ the L\'evy measure
$\nu_{\tau_1,\ldots,\tau_k}$  is given by 
  \begin{align*}
\nu_{\tau_1,\ldots,
    \tau_k}=\nu_{\tau_1,\ldots,\tau_k;1}+\nu_{\tau_1,\ldots,\tau_k;
    2},  \ \ &\nu_{\tau_1,\ldots,\tau_k; 1}= 
  \frac{\gamma_1-1}{\gamma_1}\theta^{\gamma_1}  \, 
                G_0\circ T_{\tau_1,\ldots, \tau_k;\theta}^{-1},
    \\
  \notag &\nu_{\tau_1,\ldots,\tau_k; 2}= \frac{\gamma_1-1}{\gamma_1} \int_\theta^\infty \gamma_1
           x^{\gamma_1-1} G_0\circ T_{\tau_1,\ldots,\tau_k; x}^{-1}\,  dx, 
\end{align*}
where for   $x>0$, $ T_{\tau_1,\ldots,\tau_k;x}:\, \bbr\to\bbr^k$ is given by 
\begin{align*} 
T_{\tau_1,\ldots,\tau_k;x}(y)  =  \left( \frac{1}{x} \bigl(
  \tau_j-\one(y\leq q(\tau_j))\bigr), \, j=1,\ldots, k\right). 
\end{align*}

In the case $\theta=0$ the L\'evy measure
$\nu_{\tau_1,\ldots,\tau_k}$  is given by
\begin{align*} 
\nu_{\tau_1,\ldots,
    \tau_k} = \frac{\gamma_1-1}{\gamma_1} \int_0^\infty \gamma_1
           x^{\gamma_1-1} G_0\circ T_{\tau_1,\ldots,\tau_k; x}^{-1}\,
  dx.
 \end{align*}

Finally, the mean $m_{\tau_j}$ is given by
\begin{equation} \label{e:ass.bias}
m_{\tau_j} =-\frac{\theta^{\gamma_1-1}}{\gamma_1} \int_\bbr 
  \bigl( \tau_j-\one(y\leq q(\tau_j))\bigr)\, G_0(dy), \ j=1,\ldots, k
\end{equation}
if $\theta>0$, 
and the mean vanishes if $\theta=0$. 

\medskip  \noindent {\rm Case 2: $\gamma_1>2$ or $\gamma_1=2$ and $\E(1/e(X_1))<\infty$.}  As $n\to\infty$,
$$
  \left( n^{-1/2} \sum_{i=1}^n \frac{D_i}{\max(e(X_i),b_n)}
  \bigl( \tau_j-\one(Y_i\leq q(\tau_j))\bigr), \, j=1,\ldots, k\right) 
 \Rightarrow \bigl( Z_{\tau_j}, \, j=1,\ldots, k\bigr)\,,
$$
where $\bigl( Z_{\tau},~0<\tau<1 \bigr)$ is a Gaussian process with mean 0 and covariance given by
$$
{\rm Cov}(Z_{\tau_1},Z_{\tau_2})=\E\left(1/e(X_1)(\tau_1-\one(Y_1\leq q(\tau_1))(\tau_2-\one(Y_1\leq q(\tau_2))\right).
$$

\end{proposition}

\begin{proof}[Proof of Theorem \ref{th:Z1}]
It follows from the two propositions and \eqref{e:split.Zn} that \eqref{e:big.conv} holds in finite-dimensional distributions. Since for each $0<\tau<1$ the processes $\bigl(Z_{n,\tau}(u)\bigr)$ have convex sample paths, the laws of each component process in 
\eqref{e:big.conv} are tight in $C(\R)$ and, hence, the the laws of the entire random field are tight in $C^k(\R)$. 
It follows that \eqref{e:big.conv} holds as weak convergence in $C^k(\R)$; see e.g. Remark 1 in \cite{davis1992m}).

The fact that the limiting process of each component in \eqref{e:big.conv} has strictly convex sample paths and, hence, a unique sample minimum, implies that the argmin functional in \eqref{e:exr.diff} is on the set of full measure with respect to the limiting law for each $0<\tau<1$; see Lemma 2.2 in \cite{davis1992m}). Therefore, the continuous mapping theorem implies the statement of the theorem. 
\end{proof}

 \subsection{Intermediate quantiles}

\begin{proposition} \label{prop:cip.inter}
    Under the conditions of Theorem \ref{t:quant.lim.interm}, as $n\to\infty$, 
    
\begin{equation*} 
Z_n^{(2)}(u) \cip u^2/2~~\mbox{ for every }u\in\R\,. 
\end{equation*}
\end{proposition}

\begin{proposition} \label{pr:Z1.inter}
Under the conditions of Theorem \ref{t:quant.lim.interm}, as $n\to\infty$, 
$$
\bigl(g_Y(q(\tau_n))h_n\bigr)^{-1} \sum_{i=1}^n \frac{D_i}{\max(e(X_i),b_n)}
  \bigl( \tau_n-\one(Y_i\leq q(\tau_n))\bigr)  
 \Rightarrow Z 
$$
weakly in $\bbr$, where $Z$ is an
infinitely divisible random variable 
without a Gaussian component, with the characteristic function
$$
\E e^{iaZ}\ = \exp\left\{ iam + \int_{-\infty}^0 \bigl(e^{iax}-1
  -iax\bigr)\, \nu(dx)\right\},
$$
$a\in\bbr$. Here
\begin{equation*} \label{e:lim.mean.interm}
  m= \frac{\beta-1}{\gamma_1}\theta^{\gamma_1},\end{equation*}
while the L\'evy measure
$\nu$  is given by 
 \begin{align*}
\nu= \beta \frac{\gamma_1-1}{\gamma_1}
      \theta^{\gamma_1} \delta_{-1/\theta} + \beta(\gamma_1-1)
   \mu_{\gamma_1; \theta},
 \end{align*}
 with $ \mu_{\gamma_1; \theta}$ having the following density with respect to the
 Lebesgue measure $\lambda$: 
 $$
 \frac{d \mu_{\gamma_1; \theta}}{d\lambda}(x) = |x|^{-(\gamma_1+1)},
 \ -1/\theta<x<0.
 $$
 
\end{proposition}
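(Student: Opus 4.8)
The plan is to mirror the proof of Proposition~\ref{pr:z1} (Case~1, $\gamma_1\in(1,2)$, single quantile, $\theta>0$), applying the same infinitely divisible limit theorem for triangular arrays (Theorem~15.28 in \cite{kallenberg:2002}), but now to the row-wise i.i.d.\ array
\begin{equation*}
 \xi_{n,i}=\bigl(g_Y(q(\tau_n))h_n\bigr)^{-1}\,\frac{D_i}{\max(e(X_i),b_n)}\bigl(\tau_n-\one(Y_i\le q(\tau_n))\bigr),\qquad i=1,\dots,n,
\end{equation*}
whose sum is exactly the quantity in the statement. Two structural features will be exploited. First, the law of $\xi_{n,1}$ genuinely changes with $n$, so this is a bona fide triangular array rather than a rescaled fixed i.i.d.\ sequence. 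Second, since $\max(e(X_i),b_n)\ge b_n$ and $g_Y(q(\tau_n))h_nb_n\to\theta>0$, one has $|\xi_{n,i}|\le \bigl(g_Y(q(\tau_n))h_nb_n\bigr)^{-1}\max(\tau_n,1-\tau_n)\to 1/\theta$, so the summands are asymptotically uniformly bounded; this both supplies the uniform-negligibility hypothesis and makes truncation at any fixed $t>1/\theta$ eventually vacuous, so the three conditions of Kallenberg's theorem reduce to: (a) $n\,\bbP[\xi_{n,1}\in\cdot]\vague\nu(\cdot)$ on $\overline{\bbr}\setminus\{0\}$; (b) $n\,\E[\xi_{n,1}]\to m$; and (c) $\limsup_{\vep\downarrow 0}\limsup_{n\to\infty}n\,\Var\bigl(\xi_{n,1}\one(|\xi_{n,1}|\le\vep)\bigr)=0$ (so the limit carries no Gaussian part), whence $n\,\Var(\xi_{n,1})\to\int x^2\,\nu(dx)$ follows from (a).

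The computational engine is the device already used throughout Section~\ref{subsec:fixed}: condition on $e(X_1)=z$, invoke Assumption~\ref{ass:unconfoundedness} to replace $\E[D_1 f(Y_1)\mid e(X_1)=z]$ by $z\,\E[f(Y_1)\mid e(X_1)=z]$, split every $z$-integral at $b_n$ (and, for the absolutely continuous piece, also at a fixed small $z_0$), and substitute three facts. Since $\tau_n\to 0$ forces $q(\tau_n)\downarrow y_*$, assumption~\eqref{e:small.p.interm} gives $\bbP(Y_1\le q(\tau_n)\mid e(X_1)=z)=(\beta+o(1))\tau_n$ uniformly in $z$ near $0$; Lemma~\ref{e:regvar.est} supplies $\int_0^{u}z\,\LFe(dz)\sim\tfrac{\gamma_1-1}{\gamma_1}u\,\bbP(e(X_1)\le u)$ and $\int_{u}^{1}z^{-1}\,\LFe(dz)\sim\tfrac{\gamma_1-1}{2-\gamma_1}u^{-1}\bbP(e(X_1)\le u)$ (the latter needing $\gamma_1<2$); and \eqref{e:express.n}, \eqref{e:bn.ass.interm} plus regular variation yield the ``key limits'' $n\tau_n\bigl(g_Y(q(\tau_n))h_n\bigr)^{-1}\bbP(e(X_1)\le b_n)\to\theta^{\gamma_1-1}$ and $n\tau_n b_n\,\bbP(e(X_1)\le b_n)\to\theta^{\gamma_1}$, the exact analogues of \eqref{e:key.lim}. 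Feeding these in: observations with $e(X_i)\le b_n$, for which the weight is $\approx \bigl(g_Y(q(\tau_n))h_nb_n\bigr)^{-1}\to 1/\theta$ and the relevant probability scales like $\beta\tau_n\int_0^{b_n}z\,\LFe(dz)$, generate the atom of $\nu$ at $-1/\theta$ with mass $\beta\tfrac{\gamma_1-1}{\gamma_1}\theta^{\gamma_1}$; observations with $b_n<e(X_i)<z_0$, after the change of variable $x=-\bigl(g_Y(q(\tau_n))h_ne(X_i)\bigr)^{-1}$ and \eqref{e:small.z}, generate the absolutely continuous part $\beta(\gamma_1-1)|x|^{-(\gamma_1+1)}\one(-1/\theta<x<0)$; and the contributions of $e(X_i)\ge z_0$, together with those of $\{D_i=1,\,Y_i>q(\tau_n)\}$, are $O(\tau_n)$ and vanish in the vague limit. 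For the centering~(b) the positive ``$\tau_n$-part'' of $\tau_n-\one(Y_i\le q(\tau_n))$ cannot be dropped; one combines it with the $\one(Y_i\le q(\tau_n))$-part over $z\le b_n$ using the tower identity $\E[\bbP(Y_1\le q(\tau_n)\mid e(X_1))]=\tau_n$ from \eqref{e:quant.def}, and what survives is the constant $m$ in the statement.

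The main obstacle will be part~(c), ruling out a spurious Gaussian component, i.e.\ showing $\limsup_{\vep\downarrow 0}\limsup_{n\to\infty}n\,\E[\xi_{n,1}^2\one(|\xi_{n,1}|\le\vep)]=0$. As in the proof of Proposition~\ref{pr:z1} (the bound on $I_{1,1,n}^{(v)}$), this quantity is, up to constants, $n\tau_n\bigl(g_Y(q(\tau_n))h_n\bigr)^{-2}\int_{c b_n/(\theta\vep)}^{1}z^{-1}\,\LFe(dz)$, which by \eqref{e:big.z} and the key limits above is $\sim C\vep^{2-\gamma_1}\to 0$ as $\vep\downarrow 0$ --- this is precisely where $2-\gamma_1>0$ enters, which is also why the intermediate-quantile statement is confined to $\gamma_1\in(1,2)$ (for $\gamma_1\ge 2$ the measure $\nu$ would fail $\int\min(1,x^2)\,\nu(dx)<\infty$). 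A secondary technical nuisance is that the $o(1)$ in \eqref{e:small.p.interm} must be pulled outside the $z$-integrals over the shrinking interval $(0,b_n)$ and over the fixed interval $(b_n,z_0)$, which is exactly why that assumption is imposed uniformly in $z$ near $0$. Once (a)--(c) are verified, the characteristic function of the limit is read off from Theorem~15.28 in \cite{kallenberg:2002}, yielding the infinitely divisible law with the stated mean $m$ and L\'evy measure $\nu$.
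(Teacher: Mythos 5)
Your proposal is correct and follows essentially the same route as the paper: both apply Kallenberg's Theorem 15.28 to the triangular array, verify the vague convergence of $n\,\bbP[\xi_{n,1}\in\cdot]$, the centering limit, and the truncated-variance limit by splitting the $z$-integrals at $b_n$ (and $z_0$) and feeding in \eqref{e:small.p.interm}, Lemma \ref{e:regvar.est}, \eqref{e:express.n} and \eqref{e:bn.ass.interm}, producing the same atom at $-1/\theta$ and the same absolutely continuous part. The only organizational difference is in the variance condition: the paper computes the truncated second moment for all $t$ directly, whereas you take $t>1/\theta$ (where truncation is eventually vacuous, since $\theta>0$ here) and rule out a Gaussian component by the $\vep\downarrow 0$ argument — both are valid, though note the uniform-negligibility of the array follows from your condition (a) rather than from the deterministic bound $|\xi_{n,1}|\lesssim 1/\theta$.
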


 \begin{proof}[Proof of Theorem \ref{t:quant.lim.interm}]
The statement of the theorem follows from Propositions \ref{prop:cip.inter} and \ref{pr:Z1.inter} in the same way as the statement of Theorem \ref{thm:joint} follows from 
Propositions \ref{pr:z2} and \ref{pr:z1}. 
\end{proof}

\subsection{Quantile treatment effects}

\begin{proof}[Proof of Theorem \ref{thm:joint} for fixed quantiles]
The proof consists of applying the procedure of Theorem \ref{th:Z1} to both $\hat q_1(\tau)$
and $\hat q_0(\tau)$ at the same time. That is, we apply the decomposition in Remark \ref{rk:structure} to both estimators. The convergence in probability described in Proposition \ref{pr:z2} clearly continues to hold for both estimators. The version of the statement of Proposition \ref{pr:z1} that is needed now takes the form 
 \begin{align*} 
  & \left( (h_n^{(1)})^{-1} \sum_{i=1}^n \frac{D_i}{\max(e(X_i),b_n)}
  \bigl( \tau_j-\one(Y_i(1)\leq q_1(\tau_j))\bigr), \, j=1,\ldots, k, \right. \\
  \notag  &\left.  
  (h_n^{(0)})^{-1} \sum_{i=1}^n \frac{1-D_i}{\max(e(X_i),b_n)}
  \bigl( \tau_j-\one(Y_i(0)\leq q_0(\tau_j))\bigr), \, j=1,\ldots, k
  \right) \\
\notag \Rightarrow&  \bigl( Z_{1,\tau_j}, \, j=1,\ldots, k, \, Z_{2,\tau_j}, \, j=1,\ldots, k
\bigr), 
\end{align*}
where $\bigl( Z_{1,\tau_1},\ldots, Z_{1,\tau_k}\bigr)$ and $\bigl( Z_{0, \tau_1},\ldots, Z_{0, \tau_k}\bigr)$ are independent infinitely divisible random vectors, each with the distribution described in Proposition \ref{pr:z1}. To prove that, we need to check the three claims corresponding to \eqref{e:right.t}, \eqref{e:tr.m} and \eqref{e:tr.v}. The claim corresponding to \eqref{e:right.t} is
\begin{align} \label{e:vague.2sided}
&n\bbP\left[\left( (h_n^{(1)})^{-1}\frac{D_1}{\max(e(X_1),b_n)}
  \bigl( \tau_j-\one(Y_1(1)\leq q_1(\tau_j))\bigr), \, j=1,\ldots,    k, \, \right.\right. \\ 
\notag & \left.\left.   (h_n^{(0)})^{-1}\frac{1-D_1}{\max(e(X_1),b_n)}
  \bigl( \tau_j-\one(Y_1(0)\leq q_0(\tau_j))\bigr), \, j=1,\ldots,    k
  \right)\in \cdot\right] \vague \hat \nu_{\tau_1,\ldots, \tau_k}(\cdot) 
\end{align}
vaguely in $(\overline{\bbr})^{2k}\setminus \{0\}$, where $\hat \nu_{\tau_1,\ldots, \tau_k}$ is a Radon measure given by
\begin{equation*} \label{e:big.Lm}
\hat \nu_{\tau_1,\ldots, \tau_k} ((A\times B) \setminus \{0\}) = \nu^{(1)}_{\tau_1,\ldots, \tau_k} (A\setminus \{0\})\times \delta_0(B) +
\delta_0(A) \times \nu^{(0)}_{\tau_1,\ldots, \tau_k} (B\setminus \{0\})
\end{equation*}
for any Borel sets $A,B$ in $\bbr^k$. Here   $\nu^{(1)}_{\tau_1,\ldots, \tau_k}$ and $\nu^{(0)}_{\tau_1,\ldots, \tau_k}$ are the L\'evy measures described in Proposition \ref{pr:z1} for the cases $z\to 0$ and $z\to 1$, correspondingly. However, for any  Borel set $A$ in $\bbr^{2k}$ bounded away from the origin,
\begin{align*}
&n \bbP\left[\left( (h_n^{(1)})^{-1}\frac{D_1}{\max(e(X_1),b_n)}
  \bigl( \tau_j-\one(Y_1(1)\leq q_1(\tau_j))\bigr), \, j=1,\ldots,    k, \, \right.\right. \\ 
 & \left.\left.   (h_n^{(0)})^{-1}\frac{1-D_1}{\max(e(X_1),b_n)}
  \bigl( \tau_j-\one(Y_1(0)\leq q_0(\tau_j))\bigr), \, j=1,\ldots,    k
  \right)\in A\right] \\ 
 =&n\int_0^1 z\bbP\left[\left((h_n^{(1)})^{-1}\frac{1}{\max(z,b_n)}    \bigl( \tau_j-\one(Y_1(1)\leq q_1(\tau_j))\bigr), \, j=1,\ldots,    k\right) 
 \in A_{1,{\bf 0}}\Bigg| e(X_1)=z 
 \right] \\
 & \hskip 5.5in \LFe(dz)\\
 +& n\int_0^1 (1-z)\bbP\left[\left((h_n^{(0)})^{-1}\frac{1}{\max(z,b_n)}    \bigl( \tau_j-\one(Y_1(0)\leq q_0(\tau_j))\bigr), \, j=1,\ldots,    k\right) 
 \in A_{2,{\bf 0}}\Bigg| e(X_1)=z 
 \right] \\
 & \hskip 5.5in \LFe(dz), 
\end{align*}
where $A_{1,{\bf 0}}$ and $A_{2,{\bf 0}}$ are the sections of the set $A$ obtained by setting the first $k$ or the last $k$ coordinates equal to 0, correspondingly. However, by Fubini's theorem, both of these sets are bounded away from the origin Borel subsets of $\bbr^k$, and it follows from \eqref{e:right.t} that the first term in the right hand side converges to $\nu^{(1)}_{\tau_1,\ldots, \tau_k}(A_{1,{\bf 0}})$ as the long as the latter set is a continuity set of $\nu^{(1)}_{\tau_1,\ldots, \tau_k}$. By the analog of \eqref{e:right.t} applied to the case $z\to 1$,  we see that the second term in the right hand side converges to $\nu^{(0)}_{\tau_1,\ldots, \tau_k}(A_{2,{\bf 0}})$, again as the long as the latter set is a continuity set. This proves \eqref{e:vague.2sided}.

The statement \eqref{e:tr.m} is a marginal convergence statement, so
its  version needed in the present case follows because we already
know that it holds separately when $z\to 0$ and when $z\to
1$. 
Finally, the statement \eqref{e:tr.v} is a two-dimensional
convergence statement. When both coordinates correspond to the case
$z\to 0$, or both correspond to the case $z\to 1$, its version needed
in the present case is already known. This leaves only the statement  
\begin{align*} 
&n\, \cov\left[ (h_n^{(1)})^{-1}\frac{D_1}{\max(e(X_1),b_n)}
  \bigl( \tau_{j_1}-\one(Y_1(1)\leq q(\tau_{j_1}))\bigr)  \one\bigl(
                               A_n(t)\bigr), \right. \\
\notag &\left. \hskip 0.4in (h_n^{(0)})^{-1}\frac{1-D_1}{\max(e(X_1),b_n)}
  \bigl( \tau_{j_2}-\one(Y_1(0)\leq q(\tau_{j_2}))\bigr)  \one\bigl(
                               A_n(t)\bigr)\right] \\
\notag &\hskip 1.5in 
 \to
\int_{\bbr^{2k}} x_{j_1}x_{j_2}\one\bigl( \|\bx\|_\infty\leq t\bigr)
\hat \nu_{\tau_1,\ldots, \tau_k}(d\bx)  
\end{align*}
for every $j_1,j_2=1,\ldots, k$ and $t>0$ of the type described in Proposition \ref{pr:z1}. However, by the definition of the measure $\hat \nu_{\tau_1,\ldots, \tau_k}$, the integral in the right hand sides vanishes. Furthermore, we have shown in the proof of the proposition that the product of the expectations part of the covariance vanishes in the limit. Now the validity of the above statement follows from the fact that either $D_1$ or $1-D_1$ always vanish. 
 \end{proof}

\begin{proof}[Proof of Theorem \ref{thm:joint} for intermediate quantiles.] Once again, the proof consists of applying the procedure used in the proof of Theorem \ref{t:quant.lim.interm} to both $\hat q_1(\tau_n)$ and $\hat q_0(\tau_n)$ at the same time. The convergence in probability in Proposition \ref{prop:cip.inter}, obviously, holds for both estimators at the same time. The version of Proposition \ref{pr:Z1.inter} we need now is 
\begin{align*} \label{e:interm.2sided.prop}
&\left(\bigl(g_Y(q(\tau_n))h_n^{(1)}\bigr)^{-1} \sum_{i=1}^n \frac{D_i}{\max(e(X_i),b_n)}
  \bigl( \tau_n-\one(Y_i(1)\leq q(\tau_n))\bigr),
\right. \\
\notag & \left. \bigl(g_Y(q(\tau_n))h_n^{(0)}\bigr)^{-1} \sum_{i=1}^n \frac{D_i}{\max(e(X_i),b_n)}
  \bigl( \tau_n-\one(Y_i(0)\leq q(\tau_n))\bigr) 
  \right) 
 \Rightarrow (Z_1,Z_0)
    \end{align*}
with $Z_1,Z_0$ being independent infinitely divisible random variables, each with a distribution described in Proposition \ref{pr:Z1.inter}, and for this we need to verify three claims corresponding to \eqref{e:vague.interm}, \eqref{e:tr.m.interm} and \eqref{e:tr.v.interm}. However, all of these three claims follow from their counterparts among 
\eqref{e:vague.interm}, \eqref{e:tr.m.interm} and \eqref{e:tr.v.interm} in the same way as in the case of fixed quantiles. 
 \end{proof}

\newpage

\section*{Appendix B: auxiliary results}  

This appendix contains 
the proofs of the auxiliary technical results needed for the proofs of the main theorems in 
Sections \ref{sec:IPW-quantile}, \ref{sec:intermediate} and \ref{sec:qte}. 
We start by establishing certain useful properties of moments of the propensity scores, followed by two subsections, dealing with fixed quantiles and intermediate quantiles, respectively.

\begin{lemma} \label{e:regvar.est}
 Under Assumption \ref{ass:RV_prop_score},  the following asymptotic results hold as $u\to 0$:
  \begin{equation} \label{e:small.z}
  \int_0^{u} z\LFe(dz) \sim \frac{\gamma_1-1}{\gamma_1} u 
    \bbP(e(X_1)\leq u),
 \end{equation}
 and for any $\beta>\gamma_1-1$,
 \begin{equation} \label{e:big.z}
 \int_{u}^1 z^{-\beta}\LFe(dz) \sim \frac{\gamma_1-1}{1+\beta-\gamma_1}
 u^{-\beta}   \bbP(e(X_1)\leq u).
 \end{equation}

\end{lemma}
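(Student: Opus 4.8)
The plan is to reduce both relations to Karamata's theorem applied to the distribution function $F(t):=\bbP(e(X_1)\le t)$. By Assumption \ref{ass:RV_prop_score}, $F$ is regularly varying at the origin with index $\gamma_1-1>0$, so we may write $F(t)=t^{\gamma_1-1}\ell(t)$ with $\ell$ slowly varying at $0$. Since $e(X_1)$ takes values in $(0,1)$ we have $F(0^+)=0$, and $t^{\gamma_1-1}\ell(t)\to 0$ as $t\to 0$, so all the integrals appearing below are finite for $u\in(0,1)$; moreover $t F(t)=t^{\gamma_1}\ell(t)\to 0$ as $t\to 0$.

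For \eqref{e:small.z}, I would first apply a Stieltjes integration by parts (equivalently, Fubini) to get $\int_0^{u} z\,\LFe(dz) = uF(u) - \int_0^{u} F(s)\,ds$. Karamata's theorem, in the form valid near the origin for an exponent $\gamma_1-1>-1$ (or after the substitution $z\mapsto 1/z$, in its classical form at infinity), gives $\int_0^{u} F(s)\,ds = \int_0^{u} s^{\gamma_1-1}\ell(s)\,ds \sim \frac{1}{\gamma_1}\,u^{\gamma_1}\ell(u) = \frac{1}{\gamma_1}\,uF(u)$ as $u\to 0$. Subtracting, $\int_0^{u} z\,\LFe(dz) \sim uF(u)\bigl(1-\tfrac{1}{\gamma_1}\bigr) = \tfrac{\gamma_1-1}{\gamma_1}\,uF(u)$, which is \eqref{e:small.z}.

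For \eqref{e:big.z}, integration by parts yields $\int_{u}^{1} z^{-\beta}\,\LFe(dz) = F(1) - u^{-\beta}F(u) + \beta\int_{u}^{1} z^{-\beta-1}F(z)\,dz$. Writing $z^{-\beta-1}F(z) = z^{\gamma_1-\beta-2}\ell(z)$ and noting that $\gamma_1-\beta-2<-1$ precisely because $\beta>\gamma_1-1$, Karamata's theorem in the exponent-below-$(-1)$ form near the origin gives $\int_{u}^{1} z^{\gamma_1-\beta-2}\ell(z)\,dz \sim \frac{u^{\gamma_1-\beta-1}\ell(u)}{\beta+1-\gamma_1} = \frac{u^{-\beta}F(u)}{\beta+1-\gamma_1}$. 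Since $u^{-\beta}F(u) = u^{\gamma_1-1-\beta}\ell(u)\to\infty$ as $u\to 0$, the constant term $F(1)$ is negligible, and combining the remaining two terms gives $\int_{u}^{1} z^{-\beta}\,\LFe(dz) \sim u^{-\beta}F(u)\bigl(\tfrac{\beta}{\beta+1-\gamma_1}-1\bigr) = \tfrac{\gamma_1-1}{\beta+1-\gamma_1}\,u^{-\beta}F(u)$, which is \eqref{e:big.z}.

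\textbf{Main point of care.} There is no substantive obstacle here: the argument is a routine Karamata computation. The only things to verify carefully are that the boundary terms in the integrations by parts vanish (or are asymptotically negligible), that the exponents have the correct sign so that the appropriate version of Karamata's theorem applies ($>-1$ for \eqref{e:small.z}, $<-1$ for \eqref{e:big.z}), and that passing between the "near $0$" and "near $\infty$" formulations via $z\mapsto 1/z$ is done consistently.
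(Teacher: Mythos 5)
Your proposal is correct and follows essentially the same route as the paper: both parts are the standard Karamata computation, with \eqref{e:small.z} obtained from the identity $\int_0^u z\,\LFe(dz)=uF(u)-\int_0^u F(s)\,ds$ exactly as in the paper, and \eqref{e:big.z} obtained by the same Karamata estimate that the paper phrases in the equivalent form of a truncated-moment bound for the regularly varying variable $W=1/e(X_1)$ at infinity. The points of care you flag (vanishing boundary terms, sign of the exponents, consistency of the $z\mapsto 1/z$ change of variable) are precisely the relevant ones.
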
 
\begin{proof}
  Note that 
  \begin{align*}
      \int_0^{u} z\LFe(dz)&=\int_0^\infty \bbP(x<e(X)\le u)\,dx\\
     &= \int_0^u \ (\LFe(u)-\LFe(x))\,dx,
  \end{align*}
and 
 \eqref{e:small.z} follows by Karamata's theorem.

As for \eqref{e:big.z}, note that $W=1/e(X)$ is regularly varying at infinity with index $-(\gamma_1-1)$.  Hence, by  Karamata's theorem, as $u\to 0$, 
\begin{eqnarray*}
 \int_{u}^1 z^{-\beta}\LFe(dz)&=&\E\left (W^\beta\one(W\le 1/u)\right)\\
 &\sim& \frac{\gamma_1-1}{1+\beta-\gamma_1}u^{-\beta}\mathbb{P}(e(X)\le u),   
\end{eqnarray*}
as required.
\end{proof}

\subsection{Fixed quantiles} \label{subsec:fixed}

We continue in the notation established in Remark \ref{rk:structure}.

\subsubsection{Proof of Proposition \ref{pr:z2}}

\begin{proof}  We first consider the case $\gamma_1\in (1,2)$.  For $u>0$, we have
\begin{align*}
&\E Z^{(2)}_{n,\tau}(u) = \frac{n^2}{h_n^2}\E\left[   \frac{D_1}{\max(e(X_1),b_n)} \one
  (q(\tau)<Y_1\leq q(\tau)+uh_n/n) \bigl( uh_n/n-Y_1+q(\tau)\bigr)\right] \\
=&\frac{n^2}{h_n^2}\left[       \int_0^{b_n} \frac{z}{b_n} \E\bigl[  \one
  (q(\tau)<Y_1\leq q(\tau)+uh_n/n) \bigl( uh_n/n-Y_1+q(\tau)
   \big| e(X_1)=z\bigr]\, \LFe(dz)  \right. \\
  &\left.  + \int_{b_n}^1   \E\bigl[  \one
  (q(\tau)<Y_1\leq q(\tau)+uh_n/n) \bigl( uh_n/n-Y_1+q(\tau)
   \big| e(X_1)=z\bigr]\, \LFe(dz) 
\right] \\
=& A_n(u) +B_n(u). 
\end{align*}
Notice that by \eqref{e:key.lim} and assumption \eqref{e:small.p1}, 
\begin{align} \label{e:mean.A}
A_n(u) \leq& u\frac{n}{h_n} \int_0^{b_n} \frac{z}{b_n} \bbP
  \bigl(q(\tau)<Y_1\leq q(\tau)+uh_n/n  
   \big| e(X_1)=z\bigr)\, \LFe(dz) \\
\notag \leq& u\frac{n}{h_n} \bbP(e(X_1)\leq b_n)\sup_{z\leq b_n}\bbP
  \bigl(q(\tau)<Y_1\leq q(\tau)+uh_n/n  
   \big| e(X_1)=z\bigr) \to 0. 
\end{align}
Next,
\begin{align*} 
B_n(u) =& \frac{n^2}{h_n^2}E\bigl[  \one
  (q(\tau)<Y_1\leq q(\tau)+uh_n/n) \bigl(
  uh_n/n-Y_1+q(\tau)\bigr)\bigr]\\
\notag  -&
 \frac{n^2}{h_n^2} \int_0^{b_n}  \E\bigl[  \one
  (q(\tau)<Y_1\leq q(\tau)+uh_n/n) \bigl( uh_n/n-Y_1+q(\tau)
   \big| e(X_1)=z\bigr]\, \LFe(dz). 
\end{align*}
The same bound in \eqref{e:mean.A} can be applied to the second term in the above equation and hence it vanishes in the limit. Furthermore, by \eqref{e:dens,tau}, as $n\to\infty$, 
\begin{align} \label{e:mean_B1}
&\E\left\{\frac{n^2}{h_n^2}\bigl[  \one
  (q(\tau)<Y_1\leq q(\tau)+uh_n/n) \bigl(
  uh_n/n-Y_1+q(\tau)\bigr)\bigr]\right\} \\
\nonumber \sim & g_Y(q(\tau))
  \frac{n^2}{h_n^2}\int_{q(\tau)}^{q(\tau)+uh_n/n} (
  uh_n/n-y+q(\tau)\bigr)\, dy =\frac{u^2}{2}g_Y(q(\tau)). 
\end{align}
We conclude by \eqref{e:mean.A} and \eqref{e:mean_B1} that for $u>0$, 
\begin{equation} \label{e:lim.mean}
  \lim_{n\to\infty}\E Z^{(2)}_{n,\tau}(u) =
  \frac{u^2}{2}g_Y(q(\tau)), 
\end{equation}
and a similar argument shows that \eqref{e:lim.mean} hols for
$u\leq 0$ as well.

Next we show that for $\theta>0$ and $p=2$ or for $\theta=0$ and $p\in (1,\gamma_1)$
\begin{equation} \label{e:lim.var}
\lim_{n\to\infty}\E\Bigl|Z^{(2)}_{n,\tau}(u)-\E\left(Z^{(2)}_{n,\tau}(u)\right)\Bigr|^p =0,
\end{equation}
from which the conclusion of the proposition follows.

Again, we only spell out the argument for $u>0$.   We use the Marcinkiewicz-Zygmund inequality; see e.g. \cite{chow:teicher:1988}, p. 367, Theorem 2. 
For  $p\in(1,2]$, there exists a $C_p$ with such that
\begin{align*}
  &\E\Bigl|Z^{(2)}_{n,\tau}(u) -\E Z^{(2)}_{n,\tau}(u)\Bigr|^p\\
\leq& C_p\frac{n^{p+1}}{h_n^{2p}}\E\biggl|  \frac{D_1}{\max(e(X_1),b_n)} \one
  (q(\tau)<Y_1\leq q(\tau)+uh_n/n) \bigl(
  uh_n/n-Y_1+q(\tau)\bigr)\\
  &\hskip 0.7in  -\E\left( \frac{D_1}{\max(e(X_1),b_n)} \one
  (q(\tau)<Y_1\leq q(\tau)+uh_n/n) \bigl(
  uh_n/n-Y_1+q(\tau)\bigr)\right)
  \biggr|^p\\
\leq & 2^p C_p\frac{n^{p+1}}{h_n^{2p}}\E\biggl|  \frac{D_1}{\max(e(X_1),b_n)} \one
  (q(\tau)<Y_1\leq q(\tau)+uh_n/n) \bigl(
  uh_n/n-Y_1+q(\tau)\bigr)\biggr|^p\\
 \leq& 2^p C_p u^p\frac{n}{h_n^p}\left[       \int_0^{b_n} \frac{z}{b_n^p} \bbP
  \bigl(q(\tau)<Y_1\leq q(\tau)+uh_n/n  
   \big| e(X_1)=z\bigr)\, \LFe(dz)  \right. \\
   & \left.  \hskip 1in + \int_{b_n}^1    \frac{1}{z^{p-1}} \bbP
  \bigl(q(\tau)<Y_1\leq q(\tau)+uh_n/n  
   \big| e(X_1)=z\bigr)\, \LFe(dz) 
\right]\\
 = & A^{(p)}_n(u) +B^{(p)}_n(u). 
\end{align*}
Now suppose $\theta>0$ and choose $p=2$.  Then we have by definition of $h_n$, \eqref{e:key.lim}, and assumption \eqref{e:small.p1}
\begin{align} 
\notag A^{(2)}_n(u) \leq  & 2^2 C_2 u^2\frac{n}{h_n^2b_n} \bbP(e(X_1)\leq  b_n)\sup_{z\le b_n}\bbP
  \bigl(q(\tau)<Y_1\leq q(\tau)+uh_n/n  
   \big| e(X_1)=z\bigr)\\
\notag =& 2^2 C_2 u^2\frac{n}{h_nb_n} h_n^{-1} \bbP(e(X_1)\leq  b_n) o(1)\\
\notag \sim& 2^2 C_2 u^2\theta^{\gamma_1-2}o(1) 
   \to   0\,.
\end{align}
Next, for a fixed $z_0\in (0,1)$, we have by \eqref{e:big.z}, 
\begin{align*} 
B^{(2)}_n(u) \leq& 2^2 C_2 u^2\frac{n}{h_n^2} \left(\int_{b_n}^{z_0}
  \frac{1}{z}  \LFe(dz) \sup_{z\le z_0}\bbP
  \bigl(q(\tau)<Y_1\leq q(\tau)+uh_n/n 
   \big| e(X_1)=z\bigr)+\frac{1}{z_0}\right)\\
\notag \sim& 2^2 C_2 u^2\frac{\gamma_1-1}{2-\gamma_1}\frac{n}{h_n^2} 
b_n^{-1}\bbP(e(X_1)\leq b_n) \sup_{z\le z_0}\bbP
  \bigl(q(\tau)<Y_1\leq q(\tau)+uh_n/n 
   \big| e(X_1)=z\bigr) \\
\notag +&2^2C_2u^2\frac{n}{h_n^2}\frac{1}{z_0}.
\end{align*}
The second term above vanishes $n\to\infty$ because $h_n$ is regularly varying with exponent $1/\gamma_1>1/2$, after which the first term also vanishes by the assumption \eqref{e:small.p1} as we subsequently let $z_0\to 0$. We conclude that $B^{(2)}_n(u)\to0$ as desired.  

Turning to the case $\theta=0$ and choosing $p\in (1,\gamma_1)$, we have, if $b_n>0$,
\begin{align} 
\notag A^{(p)}_n(u) \leq  & 2^p C_p u^p\frac{n}{h_n^pb_n^{p-1}} \bbP(e(X_1)\leq  b_n)\\
\notag =& 2^p C_p u^p\frac{n}{h_n^{p-1}b_n^{p-1}} h_n^{-1} \bbP(e(X_1)\leq  h_n^{-1})\frac{\bbP(e(X_1)\leq  b_n)}{\bbP(e(X_1)\leq  h_n^{-1})} \\
\notag \sim& 2^p C_p u^p\frac{1}{(h_nb_n)^{p-1}}\frac{\bbP(e(X_1)\leq  b_n)}{\bbP(e(X_1)\leq  h_n^{-1})} \to   0\,,
\end{align}
since, by the regular variation Assumption \ref{ass:RV_prop_score} and by the Potter bounds, for any $\vep>0$, 
$\bbP(e(X_1)\leq  b_n)/\bbP(e(X_1)\leq h_n^{-1})\leq (1+\vep)(h_nb_n)^{\gamma_1-1-\vep}$. The claim follows by noticing that we can choose $\vep>0$ so small that $\gamma_1-1-\vep>p-1$ and recalling that $h_nb_n\to 0$. 

Furthermore, using the fact that $\int_{0}^1z^{p-1}\LFe(dz)<\infty$ for $p<\gamma_1$, and  \eqref{e:small.p-2}, we have for a small enough $0<z_0<1$,
\begin{align*} 
B^{(p)}_n(u) \leq & \ C\frac{n}{h_n^p}  \int_{b_n}^{z_0}
  \frac{1}{z^{p-1}}   \LFe(dz)\sup_{z\le z_0}\bbP\bigl[
  q(\tau)<Y_1\leq q(\tau)+uh_n/n)  
   \big| e(X_1)=z\bigr] \\
   &\ +  C\frac{n}{h_n^p} z_0^{-(p-1)}\int_{0}^{1}
    \bbP\bigl[q(\tau)<Y_1\leq q(\tau)+uh_n/n)  
   \big| e(X_1)=z\bigr]\LFe(dz)\\
\notag \le&\  C\frac{1}{h_n^{p-1}} 
 +  C\frac{n}{h_n^{p}} z_0^{-(p-1)}\bbP
  (q(\tau)<Y_1\leq q(\tau)+uh_n/n) \to 0\,,
\end{align*}
where  $C$ is a constant which may change value from line to line.  Therefore, \eqref{e:lim.var} holds for $\theta=
0$ as well.

Finally, if $b_n=0$ is also covered by the calculations above.  In this case, the terms $A_n(u)$ and $A_n^{(p)}(u)$ simply vanish. 

 Now turning to the case, $\gamma_1>2$ or $\gamma_1=2$ and $\E(1/e(X_1))<\infty$, the same argument as above shows that $\E Z_{n,\tau}^{(2)}(u)\to \frac{u^2}{2}g_Y(q(\tau))$.  To complete the proof, it sufficies to show
$\E(Z_{n,\tau}^{(2)}(u)-\E Z_{n,\tau}^{(2)}(u))^2\to 0$ and $n\to \infty$. However, by the preceding argument with $p=2$ and $C$ a finite constant, we have the bound,
\begin{align*}
  &\E\left(Z^{(2)}_{n,\tau}(u) -\E Z^{(2)}_{n,\tau}(u)\right)^2\\
\leq & ~C\frac{n^{3}}{n^{2}} \E\left(  \frac{D_1}{\max(e(X_1),b_n)} \one
  (q(\tau)<Y_1\leq q(\tau)+ u/n^{1/2}) \bigl(
  u/n^{1/2}-Y_1+q(\tau)\bigr) \right)^2\\
  \leq & ~C u^2\E\left( \frac{D_1}{\max(e(X_1),b_n)} \one
  (q(\tau)<Y_1\leq q(\tau)+ u/n^{1/2})  \right)^2\\
  \leq & ~C u^2\E\left(e(X_1)\left(  \frac{1}{\max(e(X_1),b_n)} \one
  (q(\tau)<Y_1\leq q(\tau)+ u/n^{1/2})\right)^2 \right)\\
  \leq & ~C u^2\E\left(1/e(X_1) \one
  (q(\tau)<Y_1\leq q(\tau)+ u/n^{1/2})\right)\\
  \to & ~0,
\end{align*}
by the dominating convergence theorem ($\E(1/e(X_1)<\infty$).  This completes the proof.

\end{proof}

\subsubsection{Proof of Proposition \ref{pr:z1}} 

\begin{proof}  Case 1  ($\alpha\in(1,2)$).
We use Theorem 15.28 in \cite{kallenberg:2002}, according to which we
have to prove the following three statements:
\begin{align} \label{e:right.t}
&n\bbP\left[\left( h_n^{-1}\frac{D_1}{\max(e(X_1),b_n)}
  \bigl( \tau_j-\one(Y_1\leq q(\tau_j))\bigr), \, j=1,\ldots,
                                  k \right)\in \cdot\right] \vague 
                                  \nu_{\tau_1,\ldots, \tau_k}(\cdot) 
\end{align}
vaguely in $(\overline{\bbr})^k\setminus \{0\}$, 
 \begin{align} \label{e:tr.m}
&n\E\left[ h_n^{-1}\frac{D_1}{\max(e(X_1),b_n)}
  \bigl( \tau_j-\one(Y_1\leq q(\tau_j))\bigr) \one\bigl( A_n(t)\bigr)\right] \\
\notag &\hskip 1.5in \to
m_{\tau_j} - \int_{\bbr^k} x_j\one\bigl( \|\bx\|_\infty>t\bigr)
\nu_{\tau_1,\ldots, \tau_k}(d\bx)  
 \end{align}
 for any $j=1,\ldots, k$, for
 some $t>0$  such that
\begin{equation} \label{e:cty.pt}
\nu_{\tau_1,\ldots, \tau_k}\bigl( \bigl\{ \bx\in\bbr^{k}:\,
\|\bx\|_\infty=t\bigr\}\bigr)=0, 
\end{equation}
where 
 \begin{equation*} \label{e:A.n}
A_n(t) = \left\{ h_n^{-1}\frac{D_1}{\max(e(X_1),b_n)}
  \bigl| \tau_j-\one(Y_1\leq q(\tau_j))\bigr|\leq t \ \text{for} \ \
  j=1,\ldots, k\right\}. 
\end{equation*}
Finally, we need to show that 
\begin{align} \label{e:tr.v}
&n\, \cov\left[ h_n^{-1}\frac{D_1}{\max(e(X_1),b_n)}
  \bigl( \tau_{j_1}-\one(Y_1\leq q(\tau_{j_1}))\bigr)  \one\bigl(
                               A_n(t)\bigr), \right. \\
\notag &\left. \hskip 0.4in h_n^{-1}\frac{D_1}{\max(e(X_1),b_n)}
  \bigl( \tau_{j_2}-\one(Y_1\leq q(\tau_{j_2}))\bigr)  \one\bigl(
                               A_n(t)\bigr)\right] \\
\notag &\hskip 1.5in 
 \to
\int_{\bbr^k} x_{j_1}x_{j_2}\one\bigl( \|\bx\|_\infty\leq t\bigr)
\nu_{\tau_1,\ldots, \tau_k}(d\bx)  
\end{align}
for every $j_1,j_2=1,\ldots, k$ and the same $t>0$ as above. 

We start with the case $\theta>0$. 
Observe that only the points of the type $t=\tau_j/\theta$ and
$t=(1-\tau_j )/\theta$, $j=1,\ldots, k$, may 
have positive masses in \eqref{e:cty.pt}, 
so in the sequel our choices of $t$ avoid these
points. 
For reasons that will be clear in the sequel, we also
avoid rational multiples of these points. 

We start by proving \eqref{e:right.t}. It suffices to prove vague
convergence in each quadrant of $\bbr^k$ separately, and we will spell
out the argument for the nonnegative quadrant only. It is enough to
prove that for any choice of positive numbers $t_1,\ldots, t_k$ none of
which is a rational multiple of the points above, we have 
\begin{align*} 
&n\bbP\left[\left( h_n^{-1}\frac{D_1}{\max(e(X_1),b_n)}
  \bigl( \tau_j-\one(Y_1\leq q(\tau_j))\bigr), \, j=1,\ldots,
                                  k \right)\in A\right] \to 
                                  \nu_{\tau_1,\ldots, \tau_k}(A)
\end{align*}
with
$$
A = \bigcup_{j=1,\ldots, k} \bigl\{ \bx:\, x_d\geq 0, \, d=1,\ldots,
k, \, x_j\geq t_j\bigr\}.
$$

We have: 
\begin{align*} 
&  n\bbP\left[\left( h_n^{-1}\frac{D_1}{\max(e(X_1),b_n)}
  \bigl( \tau_j-\one(Y_1\leq q(\tau_j))\bigr), \, j=1,\ldots,
                                  k \right)\in A\right]\\
\notag =&n\int_0^{b_n} z\bbP\bigl(   Y_1> \max_{j=1,\ldots, k} q(\tau_j)
          \big| e(X_1)=z\bigr) \one \bigl( \tau_j\geq t_jh_nb_n \
          \text{for some} \ \ j=1,\ldots, k\bigr) \LFe(dz)\\ 
  \notag +& n\int_{b_n}^1 z\bbP\bigl(   Y_1> \max_{j=1,\ldots, k} q(\tau_j)
          \big| e(X_1)=z\bigr) \one \bigl( \tau_j\geq t_jh_nz \
          \text{for some} \ \ j=1,\ldots, k\bigr) \LFe(dz)  \\
\notag =&: I_{1,n}+I_{2,n}.
\end{align*}
It follows immediately from \eqref{e:moderate.ass}, \eqref{e:key.lim},\eqref{e:small.p},  
the choice of $(t_j)$ and
\eqref{e:small.z}, that 
\begin{align} \label{e:trea.I1n}
 I_{1,n} \sim& \frac{\gamma_1-1}{\gamma_1} n b_n 
   \bbP(e(X_1)\leq b_n)  G_0\bigl[\bigl( \max_{j=1,\ldots, k} q(\tau_j),
               \infty)\bigr] \one \bigl( \tau_j\geq t_j\theta \
          \text{for some} \ \ j=1,\ldots, k\bigr)
\\
   \notag  \to& \frac{\gamma_1-1}{\gamma_1}\theta^{\gamma_1}  
G_0\bigl[\bigl( \max_{j=1,\ldots, k} q(\tau_j),
               \infty)\bigr] \one \bigl( \tau_j\geq t_j\theta \
          \text{for some} \ \ j=1,\ldots, k\bigr)
= \nu_{\tau_1,\ldots,\tau_k;1}(A). 
\end{align}

We now address the behavior of $I_{2,n}$. We start with the obvious
observations that for all $M$ large enough (specifically, for
$M>\max_j\tau_j/(t_j\theta)$), 
\begin{align} \label{I.2n.M}
\  \lim_{n\to\infty} n  \int_{Mb_n}^1 z\bbP\bigl(   Y_1> \max_{j=1,\ldots, k} q(\tau_j)
          \big| e(X_1)=z\bigr) \one \bigl( \tau_j\geq t_jh_nz \
          \text{for some} \ \ j=1,\ldots, k\bigr) \LFe(dz)  =0
\end{align}
(in fact, the integral vanishes for all $n$ large enough). 

Fix now a large integer $M>1$. We have 
 by \eqref{e:moderate.ass}, \eqref{e:key.lim},\eqref{e:small.p}, \eqref{e:key.lim},  and
\eqref{e:small.z} that 
\begin{align} \label{e:I.2n.sm}
  &n \int_{b_n}^{Mb_n}  z\bbP\bigl(   Y_1> \max_{j=1,\ldots, k} q(\tau_j)
          \big| e(X_1)=z\bigr) \one \bigl( \tau_j\geq t_jh_nz \
          \text{for some} \ \ j=1,\ldots, k\bigr) \LFe(dz)   \\
\notag \sim& \, G_0\bigl[\bigl( \max_{j=1,\ldots, k} q(\tau_j),
               \infty)\bigr] n \int_{b_n}^{\min(Mb_n,h_n^{-1}\max_j
             \tau_j/t_j)}z \LFe(dz)  \\
\notag \sim& \, G_0\bigl[\bigl( \max_{j=1,\ldots, k} q(\tau_j),
               \infty)\bigr] n \int_{b_n}^{b_n \theta^{-1}\max_j
             \tau_j/t_j}z \LFe(dz) \\
\notag \to&\, G_0\bigl[\bigl( \max_{j=1,\ldots, k} q(\tau_j),
               \infty)\bigr] \frac{\gamma_1-1}{\gamma_1} \, 
\left[ \left(\max_{j=1,\ldots, k}
            \frac{\tau_j}{t_j}\right)^{\gamma_1}-\theta^{\gamma_1}\right]_+ \\
  \notag =&   (\gamma_1-1)
\int_\theta^{\infty}  x^{\gamma_1-1} \one\left( x<\max_{j=1,\ldots,
            k}\frac{\tau_j}{t_j}\right) dx  \   G_0\bigl[\bigl( \max_{j=1,\ldots, k} q(\tau_j),
               \infty)\bigr]. 
\end{align}
Combining \eqref{I.2n.M} and \eqref{e:I.2n.sm} we conclude that
\begin{align} \label{e:I.2n.final}
I_{2,n} \to&   (\gamma_1-1) \int_\theta^{\infty}  x^{\gamma_1-1} \one\left( x<\max_{j=1,\ldots,
            k}\frac{\tau_j}{t_j}\right) dx 
  \   G_0\bigl[\bigl(
             \max_{j=1,\ldots, k} q(\tau_j)=
             \nu_{\tau_1,\ldots,\tau_k;2}(A)\bigr)\bigr].  
\end{align}
Now \eqref{e:right.t} follows from \eqref{e:trea.I1n} and
\eqref{e:I.2n.final}. 

Next we prove \eqref{e:tr.m}. We may and will choose 
$$
t>\theta^{-1} \max_{j=1,\ldots, k}\bigl(\tau_j\vee 1-\tau_j \bigr). 
$$
Write
\begin{align*} 
& n\E\left[ h_n^{-1}\frac{D_1}{\max(e(X_1),b_n)}
  \bigl( \tau_j-\one(Y_1\leq q(\tau_j))\bigr) \one\bigl( A_n(t)\bigr)\right] \\
=&- nh_n^{-1}\E\left[ \frac{D_1}{e(X_1)}\bigl( \tau_j-\one(Y_1\leq
   q(\tau_j))\bigr) \one\bigl( A_n(t)^c\bigr)\right]  \\
  +&  nh_n^{-1} \int_0^{b_n} \left( \frac{1}{b_n}-\frac{1}{z}\right)
     z\, \LFe(dz) \, \E\left[ \bigl( \tau_j-\one(Y_1\leq q(\tau_j))\bigr)
           \one(A_n(t))     \Big|    e(X_1)=z\right]\\
=&-nh_n^{-1}\E\left[ \frac{D_1}{\max(e(X_1),b_n)}\bigl( \tau_j-\one(Y_1\leq
   q(\tau_j))\bigr) \one\bigl( A_n(t)^c\bigr)\right]   \\
   +& nh_n^{-1} \int_0^{b_n} \left( \frac{1}{b_n}-\frac{1}{z}\right)
     z\, \LFe(dz) \, \E\left[ \bigl( \tau_j-\one(Y_1\leq q(\tau_j))\bigr)
               \Big|    e(X_1)=z\right]
=: I_{1,n}^{(m)}+  I_{2,n}^{(m)},
\end{align*}
because, by the choice of $t$, for large $n$ the event $A_n(t)$ always occurs if $z\leq b_n$. 
Notice that $ I_{1,n}^{(m)}$ is the integral of a function with
respect to the Radon measure on $\bbr\setminus \{0\}$ whose vague
convergence to $\nu_{\tau_1,\ldots,\tau_k}$ was proved in \eqref{e:right.t}. The
function is bounded, has compact support, and, by the choice of $t$, 
continuous on a set of full measure $\nu_{\tau_1,\ldots,\tau_k}$. We conclude that
\begin{equation} \label{e:mean.1n}
  I_{1,n}^{(m)} \to - \int_{\bbr^k} x_j\one\bigl( \|\bx\|_\infty>t\bigr)
\nu_{\tau_1,\ldots, \tau_k}(d\bx).
\end{equation}
Furthermore, by \eqref{e:small.p}, \eqref{e:small.z} and
\eqref{e:key.lim},
\begin{align} \label{e:mean.2n}
I_{2,n}^{(m)} \sim \int_\bbr 
  \bigl( \tau_j-\one(y\leq q(\tau_j))\, G_0(dy)\, 
  nh_n^{-1} \left( \frac{\gamma_1-1}{\gamma_1}-1\right) \bbP(e(X_1)\leq
  b_n) \to m_{\tau_j}. 
\end{align}
Now \eqref{e:tr.m} follows from \eqref{e:mean.1n} and
\eqref{e:mean.2n}.

It remains to prove \eqref{e:tr.v}. Write
\begin{align*}
& n\, \cov\left[ h_n^{-1}\frac{D_1}{\max(e(X_1),b_n)}
  \bigl( \tau_{j_1}-\one(Y_1\leq q(\tau_{j_1}))\bigr)  \one\bigl(
                               A_n(t)\bigr), \right. \\
\notag &\left. \hskip 0.4in h_n^{-1}\frac{D_1}{\max(e(X_1),b_n)}
  \bigl( \tau_{j_2}-\one(Y_1\leq q(\tau_{j_2}))\bigr)  \one\bigl(
                               A_n(t)\bigr)\right] \\
=&n\E\left[  \left(h_n^{-1}\frac{D_1}{\max(e(X_1),b_n)}\one\bigl(
                               A_n(t)\bigr)\right)^2 
  \bigl( \tau_{j_1}-\one(Y_1\leq q(\tau_{j_1}))\bigr)  \bigl(
   \tau_{j_2}-\one(Y_1\leq q(\tau_{j_2}))\bigr)  \right] \\
-&n\left\{\E\left[  h_n^{-1}\frac{D_1}{\max(e(X_1),b_n)}
  \bigl( \tau_{j_1}-\one(Y_1\leq q(\tau_{j_1}))\bigr)  \one\bigl(
                               A_n(t)\bigr)\right] \right. \\
& \left.\E\left[  h_n^{-1}\frac{D_1}{\max(e(X_1),b_n)}
  \bigl( \tau_{j_2}-\one(Y_1\leq q(\tau_{j_2}))\bigr)  \one\bigl(
                               A_n(t)\bigr)\right] \right\} =: I_{1,n}^{(v)} + I_{2,n}^{(v)}. 
\end{align*}
Note that  $I_{2,n}^{(v)}$ is equal to the product of the expressions in
the left hand side of \eqref{e:tr.m} for $j_1$ and $j_2$ 
divided by $n$. Since that
expression converges to a finite limit, we conclude that
\begin{equation} \label{e:var.2n}
  I_{2,n}^{(v)} \to  0.
\end{equation}
Next, write for a small $\vep>0$, 
\begin{align*}
  I_{1,n}^{(v)} =& n\E\left[  \left(h_n^{-1}\frac{D_1}{\max(e(X_1),b_n)}\one\bigl(
                               A_n(\vep)\bigr)\right)^2 
  \bigl( \tau_{j_1}-\one(Y_1\leq q(\tau_{j_1}))\bigr)  \bigl(
  \tau_{j_2}-\one(Y_1\leq q(\tau_{j_2}))\bigr)  \right] \\
+& 
 n\E\left[
   \left(h_n^{-1}\frac{D_1}{\max(e(X_1),b_n)}\one\bigl(A_n(\vep)^c\cap 
                               A_n(t)\bigr)\right)^2 
  \bigl( \tau_{j_1}-\one(Y_1\leq q(\tau_{j_1}))\bigr)  \bigl(
   \tau_{j_2}-\one(Y_1\leq q(\tau_{j_2}))\bigr)  \right] \\
=& I_{1,1,n}^{(v)} +I_{1,2,n}^{(v)}. 
\end{align*}
Once again, $ I_{1,2,n}^{(v)}$ is the integral of a function with
respect to the Radom measure on $\bbr\setminus \{0\}$ whose vague
convergence to $\nu_{\tau_1,\ldots,\tau_k}$ was proved in \eqref{e:right.t}. The
function is, once again, bounded, has compact support  and, for small $\vep>0$, 
continuous on a set of full measure $\nu_{\tau_1,\ldots,\tau_k}$. We conclude that
$$
  I_{1,2,n}^{(v)} \to  \int_{\bbr^k} x_{j_1}x_{j_2}\one\bigl(\vep< \|\bx\|_\infty\leq t\bigr)
\nu_{\tau_1,\ldots, \tau_k}(d\bx).
$$
On the other hand, for small $\vep>0$, 
\begin{align*}
I_{1,1,n}^{(v)} \leq&n\E\left(h_n^{-1}\frac{D_1}{\max(e(X_1),b_n)}\one\bigl(
                               A_n(\vep)\bigr)\right)^2 
                      \sim  nh_n^{-2} \int_{b_n/(\theta\vep)}^1 \frac1z \LFe(dz) \\
 =& nh_n^{-2}\left[ \int_0^{b_n} \frac{z}{b_n^2} 
\bbP\left(  \frac{1}{h_nb_n} \max_{j=1,\ldots, k} |\tau_j-\one(Y_1\leq q(\tau_j)|\leq \vep\Big|e(X_1)=z\right)F_{e(X)}(dz) \right. \\
 &\left. \hskip 0.3in + \int_{b_n}^{1} \frac{1}{z} 
\bbP\left(  \frac{1}{h_nz } \max_{j=1,\ldots, k} |\tau_j-\one(Y_1\leq q(\tau_j)|\leq \vep \Big|e(X_1)=z\right)F_{e(X)}(dz)
 \right].
 \end{align*}
The first integral vanishes for large $n$ as long as $\vep>0$ is small enough. In second integral, for large $n$, the values of $z$ for which the integrand does not vanish are contained in the set
$$
\bigl[ \min_{j=1,\ldots, k} (\tau_j\wedge 1-\tau_j) /(h_n\vep), 1\bigr] \subset [cb_n/(\theta\vep),1\bigr],
$$
where $c=\min_{j=1,\ldots, k} (\tau_j\wedge 1-\tau_j)/2$. We conclude that 
 \begin{align*}                  
 \limsup _{n\to\infty}I_{1,1,n}^{(v)}  \leq 
 \lim_{n\to\infty} \int_{cb_n/(\theta\vep)}^{1} \frac{1}{z} 
  F_{e(X)}(dz) = 
 C\vep^{2-\gamma_1}
  \end{align*}
  by \eqref{e:big.z} and \eqref{e:key.lim}.
  Letting first $n\to\infty$ and then $\vep\to 0$ shows that
  \begin{equation} \label{e:var.1n}
  I_{1,n}^{(v)} \to  \int_{\bbr^k} x_{j_1}x_{j_2}\one\bigl(\|\bx\|_\infty\leq t\bigr)
\nu_{\tau_1,\ldots, \tau_k}(d\bx).
\end{equation}
Now \eqref{e:tr.v} follows from \eqref{e:var.1n} and \eqref{e:var.2n},
and so the proof of the proposition is complete in the case
$\theta>0$.

We now consider the case $\theta =0$. Once again, we prove
\eqref{e:right.t} first, and we only consider the nonnegative
quadrant. We use the same notation as in the case $\theta>0$. The same
argument as in the case $\theta>0$ shows that now $I_{1,n}\to
0$. Regarding $I_{2,n}$, it is clear that \eqref{I.2n.M} still holds. 
 Furthermore, for large $M$, we proceed as in \eqref{e:I.2n.sm} to see that 
\begin{align*}  
  &n \int_{b_n}^{Mh_n^{-1}}  z\bbP\bigl(   Y_1> \max_{j=1,\ldots, k} \tau_j
          \big| e(X_1)=z\bigr) \one \bigl( \tau_j\geq t_jh_nz \
          \text{for some} \ \ j=1,\ldots, k\bigr) \LFe(dz)   \\
\notag =\sim& \, G_0\bigl[\bigl( \max_{j=1,\ldots, k} q(\tau_j),
               \infty)\bigr] n \int_{b_n}^{h_n^{-1}\max_j
          \tau_j/t_j}z \LFe(dz). 
\end{align*}    
Since $\theta=0$, this is also 
 \begin{align*} \label{e:I.2n.sm0}         
   \sim& \, G_0\bigl[\bigl( \max_{j=1,\ldots, k} q(\tau_j),
               \infty)\bigr] n \int_{0}^{h_n^{-1}\max_j
          \tau_j/t_j}z \LFe(dz) \\
\notag \to&\, G_0\bigl[\bigl( \max_{j=1,\ldots, k} q(\tau_j),
               \infty)\bigr] \frac{\gamma_1-1}{\gamma_1} \, 
  \left(\max_{j=1,\ldots, k}
            \frac{\tau_j}{t_j}\right)^{\gamma_1}  \\
  \notag =&   (\gamma_1-1)
\int_0^{\infty}  x^{\gamma_1-1} \one\left( x<\max_{j=1,\ldots,
            k}\frac{\tau_j}{t_j}\right) dx  \   G_0\bigl[\bigl( \max_{j=1,\ldots, k} q(\tau_j),
               \infty)\bigr] = \nu_{\tau_1,\ldots,\tau_k}(A), 
\end{align*}
where we have used  \eqref{e:small.z}  and the definition of $h_n$. Therefore, \eqref{e:right.t} follows.

The statement \eqref{e:tr.m} 
for $\theta=0$ is proven in the same way as in the case $\theta>0$, except that now the event $A_n(t)$ does not occur for large $n$ if $z\leq b_n$, so the term $I_{2,n}^{(m)}$ vanishes for large $n$, and the argument for 
\eqref{e:tr.v} for $\theta=0$ is the same was as
in the case $\theta>0$. Therefore, the proof of the proposition in the
case $\theta=0$ is now complete as well. 

\medskip 
\noindent  Case 2  ($\gamma_1>2$ or $\gamma_1=2$ and $\E(1/e(X_1))<\infty$). We first show that truncation in the finite variance case plays no role in the limit.  Set  
\begin{equation*}
 \tilde Z_{n,\tau}^{(1)}=-un^{-1/2} \sum_{i=1}^n \frac{D_i}{e(X_i)}
  \bigl( \tau-\one(Y_i\leq q(\tau))\bigr) 
\end{equation*}
and note that with $Z_{n,\tau}^{(1)}$ as defined in \eqref{e:split.Zn} with $h_n=n^{1/2}$, we have
\begin{eqnarray*}
   {\rm var} \left(\tilde Z_{n,\tau}^{(1)}- Z_{n,\tau}^{(1)}\right)&\le& cu^2
   \E\left(\frac{D_1}{e(X_1)}-\frac{D_1}{\max\{e(X_1),b_n\}}\right)^2 \\
   &\le &\E\left[\left(\frac{1}{e(X_1)}-\frac{1}{b_n}\right)^2 e(X_1)\one(e(X_1)\le b_n)\right]\\
  &\le &\E\left[\left(\frac{1}{e(X_1)}\right)^2e(X_1) \one(e(X_1)\le b_n)\right]\\
  &\to& 0
\end{eqnarray*}
since $1/e(X_1)$ is integrable and $b_n\to 0$. A direct appplication of the central limit theorem shows that 
$$
\tilde Z_{n,\tau}^{(1)}\Rightarrow Z_{\tau}\,,
$$
where $Z_{\tau}$ is normally distributed with mean zero and variance 
$\E(D_1/e(X_1)(\tau-\one(Y_1\le q(\tau))$ and hence this convergence is passed on to $Z_{n,\tau}^{(1)}$ as claimed. The joint convergence for multiple $\tau$'s follows from an application of the multivariate CLT applied to  $\tilde Z_{n,\tau}^{(1)}$ at multiple $\tau$'s.

\end{proof}

\subsection{Intermediate quantiles} \label{subsec:intermediate}
We now use the notation in Remark \ref{e:split.Zn.mod}. 
We start with a version of Proposition \ref{pr:z2} in the case of
intermediate quantiles.

\subsubsection{Proof of Proposition \ref{prop:cip.inter}}
\begin{proof}
The argument is similar to that in Proposition \ref{pr:z2}. 
We have for $u>0$: 
\begin{align*}
&\E Z^{(2)}_{n}(u) = \frac{n^2}{g_Y(q(\tau_n))h_n^2}\E\left[   \frac{D_1}{\max(e(X_1),b_n)} \one
  (q(\tau_n)<Y_1\leq q(\tau_n)+uh_n/n) \bigl( uh_n/n-Y_1+q(\tau_n)\bigr)\right] \\
=&\frac{n^2}{g_Y(q(\tau_n))h_n^2}\left[       \int_0^{b_n} \frac{z}{b_n}  \E\bigl[  \one
  (q(\tau_n)<Y_1\leq q(\tau_n)+uh_n/n) \bigl( uh_n/n-Y_1+q(\tau_n)
   \big| e(X_1)=z\bigr]\, \LFe(dz)  \right. \\
  &\left.  + \int_{b_n}^1    \E\bigl[  \one
  (q(\tau_n)<Y_1\leq q(\tau_n)+uh_n/n) \bigl( uh_n/n-Y_1+q(\tau_n)
   \big| e(X_1)=z\bigr]\, \LFe(dz) 
\right] \\
=& A_n(u) +B_n(u). 
\end{align*}
Notice that, first  by the assumption \eqref{e:small.p.interm}, and
then by the assumption \eqref{e:theta.n}, we have  
\begin{align*}
A_n(u) \leq& \frac{n^2}{g_Y(q(\tau_n))h_n^2}  \\
&\int_0^{b_n} \left[
  \int_{q(\tau_n)}^{q(\tau_n)+uh_n/n} \bigl( \bbP(Y_1\leq
  w|e(X_1)=z)-\bbP(Y_1\leq q(\tau_n)|e(X_1)=z)\bigr)\, dw\right]\, \LFe(dz) \\
=& \frac{n^2}{g_Y(q(\tau_n))h_n^2}  \\
&\int_0^{b_n} \left[
  \int_{q(\tau_n)}^{q(\tau_n)+uh_n/n} \bigl( (\beta +o(1))\bbP(Y_1\leq
  w)-(\beta+o(1))\bbP(Y_1\leq q(\tau_n))\bigr)\, dw\right]\, \LFe(dz) \\
=&o(1)\tau_n \frac{h_n}{n}\frac{n^2}{g_Y(q(\tau_n))h_n^2}\bbP(e(X_1)\leq b_n)\\
+& \beta \frac{n^2}{g_Y(q(\tau_n))h_n^2} 
\int_0^{b_n} \left[
  \int_{q(\tau_n)}^{q(\tau_n)+uh_n/n}  \bbP(q(\tau_n)<Y_1\leq
  w) \, dw\right]\, \LFe(dz)\\
=& o(1)\tau_n  \frac{n\bbP(e(X_1)\leq b_n)}{g_Y(q(\tau_n))h_n}
+ (\beta+o(1)) g_Y(q(\tau_n))\frac{n^2}{g_Y(q(\tau_n))h_n^2} 
  u^2\frac{h_n^2}{2n^2} \bbP(e(X_1)\leq b_n) \to 0
\end{align*}
by \eqref{e:express.n}, \eqref{e:bn.ass.interm} and regular
variation. 

Next, we write
\begin{align*} \label{e:mean.B.interm}
B_n(u) =& \frac{n^2}{g_Y(q(\tau_n))h_n^2}\E\bigl[  \one
  (q(\tau_n)<Y_1\leq q(\tau_n)+uh_n/n) \bigl(
  uh_n/n-Y_1+q(\tau_n)\bigr)\bigr]\\
\notag  -&
 \frac{n^2}{g_Y(q(\tau_n))h_n^2} \int_0^{b_n}  \E\bigl[  \one
  (q(\tau_n)<Y_1\leq q(\tau_n)+uh_n/n) \bigl( uh_n/n-Y_1+q(\tau_n)
   \big| e(X_1)=z\bigr]\, \LFe(dz). 
\end{align*}
We already know that the second term vanishes in the
limit. Furthermore, 
due to the assumptions  \eqref{e:hn.thetan} and \eqref{e:theta.n} the first term converges to $u^2/2$. 
It follows that
\begin{equation} \label{e:Zn2.mean.interm}
\lim_{n\to\infty}\E Z_n^{(2)}(u)= u^2/2, \ u\in\bbr.
\end{equation}

Next,
\begin{align*}
&\var(Z_n^{(2)}(u)) = \frac{n^3}{g_Y(q(\tau_n))^2h_n^4}\\
&\var\left[   \frac{D_1}{\max(e(X_1),b_n)} \one
  (q(\tau_n)<Y_1\leq q(\tau_n)+uh_n/n) \bigl( uh_n/n-Y_1+q(\tau_n)\bigr)\right] \\
\leq& \frac{n^3}{g_Y(q(\tau_n))^2h_n^4}\E\left[   \frac{D_1}{\max(e(X_1),b_n)} \one
  (q(\tau_n)<Y_1\leq q(\tau_n)+uh_n/n) \bigl( uh_n/n-Y_1+q(\tau_n)\bigr)\right]^2 \\
=& \frac{n^3}{g_Y(q(\tau_n))^2h_n^4} \\
&\left[       \int_0^{b_n} \frac{z}{b_n^2} \E\bigl[  \one
  (q(\tau_n)<Y_1\leq q(\tau_n)+uh_n/n) \bigl( uh_n/n-Y_1+q(\tau_n)^2
   \big| e(X_1)=z\bigr]\, \LFe(dz)  \right. \\
  &\left.  + \int_{b_n}^1  \frac1z \E\bigl[  \one
  (q(\tau_n)<Y_1\leq q(\tau_n)+uh_n/n) \bigl( uh_n/n-Y_1+q(\tau_n)^2
   \big| e(X_1)=z\bigr]\, \LFe(dz) 
\right] \\
=& A_n^{(2)}(u) +B_n^{(2)}(u). 
\end{align*}
Once again, first  by the assumption \eqref{e:small.p.interm}, and
then by assumption \eqref{e:theta.n}, 
\begin{align*}
A_n^{(2)}(u) \leq& \frac{n^3}{g_Y(q(\tau_n))^2h_n^4}  \frac{1}{b_n}\\
&\int_0^{b_n} \left[ 2\int_0^{uh_n/n} w \bigl( \bbP(Y_1\leq
  q(\tau_n)+uh_n/n-w|e(X_1)=z)-\bbP(Y_1\leq q(\tau_n)|e(X_1)=z)\bigr)\, dw\right]\\
&\hskip 4in  \LFe(dz) \\
\leq& \frac{n^3}{g_Y(q(\tau_n))^2h_n^4b_n} 2\frac{uh_n}{n} \\
&\int_0^{b_n} \left[
  \int_{q(\tau_n)}^{q(\tau_n)+uh_n/n} \bigl( \bbP(Y_1\leq
  w|e(X_1)=z)-\bbP(Y_1\leq q(\tau_n)|e(X_1)=z)\bigr)\, dw\right]\, \LFe(dz) \\
=& \frac{2un^2}{g_Y(q(\tau_n))^2h_n^3b_n}  \\
&\int_0^{b_n} \left[
  \int_{q(\tau_n)}^{q(\tau_n)+uh_n/n} \bigl( (1+o(1))\bbP(Y_1\leq
  w)-(1+o(1))\bbP(Y_1\leq q(\tau_n))\bigr)\, dw\right]\, \LFe(dz) \\
=&o(1)\tau_n \frac{h_n}{n}\frac{n^2}{g_Y(q(\tau_n))^2h_n^3b_n}\bbP(e(X_1)\leq b_n)\\
+&   \frac{2un^2}{g_Y(q(\tau_n)^2)h_n^3b_n} 
\int_0^{b_n} \left[
  \int_{q(\tau_n)}^{q(\tau_n)+uh_n/n}  \bbP(q(\tau_n)<Y_1\leq
  w) \, dw\right]\, \LFe(dz)\\
=& o(1)\tau_n  \frac{n\bbP(e(X_1)\leq b_n)}{g_Y(q(\tau_n))^2h_n^2b_n}
+  (1+o(1)) g_Y(q(\tau_n))\frac{2un^2}{g_Y(q(\tau_n)^2)h_n^3b_n} 
  u^2\frac{h_n^2}{2n^2} \bbP(e(X_1)\leq b_n)\to 0,
\end{align*}
again by \eqref{e:express.n}, \eqref{e:bn.ass.interm} and regular
variation.

Next, for a large $M>0$ we write 
\begin{align*}
B_n^{(2)}(u) = \frac{n^3}{g_Y(q(\tau_n))^2h_n^4} \left(
  \int_{b_n}^{Mb_n} + \int_{Mb_n}^{1} \right)=: B_n^{(2,1)}(u)+ 
  B_n^{(2,2)}(u).
\end{align*}
The same argument as the one used above for $A_n^{(2)}(u)$ shows that
$B_n^{(2,1)}(u)\to 0$ for every fixed $M$ (just replace $b_n$ by
$Mb_n$). On the other hand,  by \eqref{e:theta.n}, and arguing as
above, 
 \begin{align*}
B_n^{(2,2)}(u) \leq&  \frac{n^3}{g_Y(q(\tau_n))^2h_n^4Mb_n}\E\bigl[  \one
  (q(\tau_n)<Y_1\leq q(\tau_n)+uh_n/n) \bigl( uh_n/n-Y_1+q(\tau_n)\bigr)^2
   \bigr]\\
=& (1+o(1)) \frac{u^3}{3g_Y(q(\tau_n)) h_nMb_n}
\to 0 
\end{align*}
as $n\to\infty$ and then $M\to\infty$. We conclude that
\begin{equation} \label{e:Zn2.var.interm}
\lim_{n\to\infty} \var(Z_n^{(2)}(u))= 0, \ u\in\bbr.
\end{equation}
It follows from \eqref{e:Zn2.mean.interm} and \eqref{e:Zn2.var.interm}
that
\begin{equation*} \label{e:Zn2.prob.interm}
Z_n^{(2)}(u)\to u^2/2 \ \ \text{in probability}, \ u\in\bbr.
\end{equation*}
\end{proof}

We proceed with a version of Proposition \ref{pr:z1} appropriate to the case of intermediate quantiles. 

\subsection{Proof of Proposition \ref{pr:Z1.inter} }

\begin{proof}
As in the proof of Proposition \ref{pr:z1} we need to show the following 3
statements. 
\begin{align} \label{e:vague.interm}
&n\bbP\left( \bigl(g_Y(q(\tau_n))h_n\bigr)^{-1}\frac{D_1}{\max(e(X_1),b_n)}
  \bigl( \tau_n-\one(Y_1\leq q(\tau_n))\bigr) \in \cdot\right)\vague 
                                  \nu (\cdot) 
\end{align}
vaguely in $\overline{\bbr}\setminus \{0\}$, 
 \begin{align} \label{e:tr.m.interm}
&n\E\left[ \bigl(g_Y(q(\tau_n))h_n\bigr)^{-1}\frac{D_1}{\max(e(X_1),b_n)}
  \bigl( \tau_n-\one(Y_1\leq q(\tau_n))\bigr) \one\bigl( A_n(t)\bigr)\right] \\
\notag &\hskip 1.5in \to
m- \int_{\bbr} x\one\bigl( |x|>t\bigr)
\nu (dx)  
 \end{align}
 for  
 any $t>0, \, t\not=1/\theta$, where 
 \begin{equation*} \label{e:A.n1}
A_n(t) = \left\{ \bigl(g_Y(q(\tau_n))h_n\bigr)^{-1}\frac{D_1}{\max(e(X_1),b_n)}
  \bigl| \tau_n-\one(Y_1\leq q(\tau_n))\bigr|\leq t \right\}. 
\end{equation*}

Finally, we need to show that for every $t$ as above, 
\begin{align} \label{e:tr.v.interm}
&n\, \var\left[ \bigl(g_Y(q(\tau_n))h_n\bigr)^{-1}\frac{D_1}{\max(e(X_1),b_n)}
  \bigl( \tau_{n}-\one(Y_1\leq q(\tau_{n}))\bigr)  \one\bigl(
                               A_n(t)\bigr)\right] \\
\notag &\hskip 1.5in 
 \to
\int_{\bbr} x^2\one\bigl( \|x|\leq t\bigr)
\nu (dx).  
\end{align}

  We have for $t>0$,
  \begin{align*} \label{e:split.neg.interm}
& n\bbP\left( \bigl(g_Y(q(\tau_n))h_n\bigr)^{-1}\frac{D_1}{\max(e(X_1),b_n)}
  \bigl( \tau_n-\one(Y_1\leq q(\tau_n))\bigr) \leq -t \right)\\
\notag =&n\int_0^{b_n} z\bbP\bigl(   Y_1\leq    q(\tau_n)
          \big| e(X_1)=z\bigr) \one \bigl( (1-\tau_n)\geq t g_Y(q(\tau_n))h_nb_n
          \bigr) \LFe(dz)\\ 
  \notag +& n\int_{b_n}^1 z\bbP\bigl(   Y_1\leq    q(\tau_n)
          \big| e(X_1)=z\bigr) \one \bigl( (1-\tau_n)\geq t g_Y(q(\tau_n))h_nz
          \bigr)  \LFe(dz)  \\
\notag =&: I_{1,n}+I_{2,n}.
\end{align*}

If $0<t<1/\theta$, then using first  \eqref{e:small.p.interm}, then Lemma \ref{e:regvar.est}, 
\eqref{e:express.n} and, finally, \eqref{e:bn.ass.interm} and 
regular variation, we have 
\begin{align*}
I_{1,n} \sim& n\beta\bbP\bigl(   Y_1\leq    q(\tau_n)\bigr) 
  \int^{b_n}_0 z  \LFe(dz)
  \sim \beta n\tau_n \frac{\gamma_1-1}{\gamma_1} b_n \bbP(e(X_1)\leq b_n)  \\
\sim& \beta \frac{\gamma_1-1}{\gamma_1}  \frac{b_n \bbP(e(X_1)\leq b_n)}{\bigl(
q(\tau_n)   h_n\bigr)^{-1}\bbP\bigl(e(X_1)\leq \bigl(
q(\tau_n)   h_n\bigr)^{-1}\bigr)}\to \beta \frac{\gamma_1-1}{\gamma_1}
      \theta^{\gamma_1}. 
\end{align*}
On the other hand, if $t>1/\theta$, then both $I_{1,n}\to 0$ and $I_{2,n}\to 0$. 

When $0<t<1/\theta$, we treat the term $I_{2,n}$ in the same way as we treated the
analogous term in the case of fixed quantiles. It is still true that
$$
\int_{Mb_n}^1 z\bbP\bigl(   Y_1\leq    q(\tau_n)
          \big| e(X_1)=z\bigr) \one \bigl( (1-\tau_n)\geq t g_Y(q(\tau_n))h_nz
          \bigr)  \LFe(dz)=0
$$
for large $n$ if $M$ is large enough. On the other hand, for a fixed
large $M$,  first  by \eqref{e:small.p.interm}, then by \eqref{e:bn.ass.interm}, Lemma \ref{e:regvar.est}, 
\eqref{e:express.n} and, again, \eqref{e:bn.ass.interm}
\begin{align*}
&n\int_{b_n}^{Mb_n} z\bbP\bigl(   Y_1\leq    q(\tau_n)
          \big| e(X_1)=z\bigr) \one \bigl( (1-\tau_n)\geq t g_Y(q(\tau_n))h_nz
          \bigr)  \LFe(dz) \\
\sim& \beta\tau_n n \int_{b_n}^{\min(Mb_n,(g_Y(q(\tau_n))h_n)^{-1} 
             (1-\tau_n)/t)}z \LFe(dz) \\
  \sim & \beta\tau_n n \int_{b_n}^{b_n/(\theta t)}z \LFe(dz)
         \to \beta \frac{\gamma_1-1}{\gamma_1} \bigl( t^{-\gamma_1} -
         \theta^{\gamma_1}\bigr)\\
=& \beta(\gamma_1-1) \int_\theta^{\infty}  x^{\gamma_1-1} \one( x<1/t)dx.         
         \end{align*}         
We conclude that
 $$
I_{2,n} \to  \beta(\gamma_1-1) \int_\theta^{\infty}  x^{\gamma_1-1}
\one( x<1/t)dx
= \beta(\gamma_1-1) \int_0^{1/\theta} x^{-(\gamma_1+1)} \one(x>t)dx,
$$
and so for any $t>0$,
\begin{equation} \label{e:neg.jump.mod.f}
n\bbP\left( \bigl(g_Y(q(\tau_n))h_n\bigr)^{-1}\frac{D_1}{\max(e(X_1),b_n)}
  \bigl( \tau_n-\one(Y_1\leq q(\tau_n))\bigr) \leq -t \right) \to
\nu\bigl( (-\infty, -t]\bigr).
\end{equation}
On the other hand, by\eqref{e:bn.ass.interm}, for any $t>0$, since $\tau_n\to 0$
 \begin{align} \label{e:split.pos.interm}
& n\bbP\left( \bigl(g_Y(q(\tau_n))h_n\bigr)^{-1}\frac{D_1}{\max(e(X_1),b_n)}
  \bigl( \tau_n-\one(Y_1\leq q(\tau_n))\bigr) >t \right) =0
\end{align}
for all $n$ large enough. The statement \eqref{e:vague.interm} now
follows from \eqref{e:neg.jump.mod.f} and \eqref{e:split.pos.interm}.

We now prove \eqref{e:tr.m.interm}. Write
\begin{align*} \label{e:split.mean.interm}
&n\E\left[ \bigl(g_Y(q(\tau_n))h_n\bigr)^{-1}\frac{D_1}{\max(e(X_1),b_n)}
  \bigl( \tau_n-\one(Y_1\leq q(\tau_n))\bigr) \one\bigl( A_n(t)\bigr)\right] \\
\notag =&n\E\left[ \bigl(g_Y(q(\tau_n))h_n\bigr)^{-1}\frac{D_1}{\max(e(X_1),b_n)}
  \bigl( \tau_n-\one(Y_1\leq q(\tau_n))\bigr) \right] \\
 \notag -&n\E\left[ \bigl(g_Y(q(\tau_n))h_n\bigr)^{-1}\frac{D_1}{\max(e(X_1),b_n)}
  \bigl( \tau_n-\one(Y_1\leq q(\tau_n))\bigr) \one\bigl(
          A_n(t)^c\bigr)\right] \\
\notag =& I_{1,n}^{(m)} - I_{2,n}^{(m)}.
\end{align*}
As in the case of fixed quantiles, $ I_{2,n}^{(m)}$ is the integral of a function with
respect to a Radom measure on $\bbr\setminus \{0\}$ whose vague
convergence to $\nu$ was proved in \eqref{e:vague.interm}. The
function is bounded, with compact support, and, by the choice of $t$, 
continuous on a set of full measure $\nu$. Therefore, 
\begin{equation} \label{e:lim.mean.interm2}
  I_{2,n}^{(m)} \to \int_\bbr x\one(|x|>t)\, \nu(dt).
\end{equation}
Furthermore, by \eqref{e:small.p.interm}, Lemma \ref{e:regvar.est}, 
\eqref{e:express.n} and regular variation, 
\begin{align} \label{e:lim.mean.interm1}
 I_{1,n}^{(m)}=&\frac{n}{g_Y(q(\tau_n))h_n} \left[ \int_0^{b_n}
  \frac{z}{b_n}\E\bigl[ \bigl( \tau_n-\one(Y_1\leq
  q(\tau_n))\bigr)\big| e(X_1)=z\bigr] \LFe(dz) \right.\\
\notag &\left. + \int_{b_n}^1 
   \E\bigl[ \bigl( \tau_n-\one(Y_1\leq
  q(\tau_n))\bigr)\big| e(X_1)=z\bigr] \LFe(dz) \right]\\
\notag =& \frac{n}{g_Y(q(\tau_n))h_n}  \int_0^{b_n}
  \left( \frac{z}{b_n}-1\right)\E\bigl[ \bigl( \tau_n-\one(Y_1\leq
  q(\tau_n))\bigr)\big| e(X_1)=z\bigr] \LFe(dz) \\
\notag \sim& (1-\beta)  \frac{n\tau_n}{g_Y(q(\tau_n))h_n}  \int_0^{b_n}
  \left( \frac{z}{b_n}-1\right)  \LFe(dz)\\
\notag \sim& (1-\beta)  \frac{n\tau_n}{g_Y(q(\tau_n))h_n} \left(
             -\frac{1}{\gamma_1}\bbP(e(X_1)\leq b_n)\right) \to m. 
\end{align}
Now \eqref{e:tr.m.interm} follows from \eqref{e:lim.mean.interm2} and
\eqref{e:lim.mean.interm1}.

Finally, we prove \eqref{e:tr.v.interm}.  By the already proved
\eqref{e:tr.m.interm},
$$
n\left\{\E\left[ \bigl(g_Y(q(\tau_n))h_n\bigr)^{-1}\frac{D_1}{\max(e(X_1),b_n)}
  \bigl( \tau_n-\one(Y_1\leq q(\tau_n))\bigr) \one\bigl(
  A_n(t)\bigr)\right]\right\}^2\to 0.
$$
Furthermore,
\begin{align*} 
&n\E\left[ \bigl(g_Y(q(\tau_n))h_n\bigr)^{-1}\frac{D_1}{\max(e(X_1),b_n)}
  \bigl( \tau_n-\one(Y_1\leq q(\tau_n))\bigr) \one\bigl( A_n(t)\bigr)\right]^2
=\frac{n}{g_Y(q(\tau_n))^2h_n^2}\times 
\\
\notag & \left[ \int_0^{b_n}
  \frac{z}{b_n^2}\E\bigl[ \bigl( \tau_n-\one(Y_1\leq
  q(\tau_n))\bigr)^2 \one\bigl( |\tau_n-\one(Y_1\leq
  q(\tau_n))|\leq t q(\tau_n)h_nb_n\bigr)
\big| e(X_1)=z\bigr] \LFe(dz) \right.\\
\notag &\left. + \int_{b_n}^1 
     \frac{1}{z}\E\bigl[ \bigl( \tau_n-\one(Y_1\leq
  q(\tau_n))\bigr)^2 \one\bigl( |\tau_n-\one(Y_1\leq
  q(\tau_n))|\leq t q(\tau_n)h_nz\bigr)
\big| e(X_1)=z\bigr] \LFe(dz) \right]\\
\notag =& I_{1,n}^{(v)}+ I_{2,n}^{(v)}.
 \end{align*}
 Then
 \begin{align*}
&I_{1,n}^{(v)}= \frac{n}{g_Y(q(\tau_n))^2h_n^2b_n^2} \\
&\left[ \int_0^{b_n} z\bbP(Y_1\leq q(\tau_n)|e(X_1)=z) (\tau_n-1)^2
   \one\bigl( (1-\tau_n )\leq t q(\tau_n)h_nb_n\bigr)
\LFe(dz) \right.\\
& \left.+ \int_0^{b_n} z\bbP(Y_1> q(\tau_n)|e(X_1)=z) \tau_n^2
   \one\bigl( \tau_n\leq t q(\tau_n)h_nb_n\bigr)
\LFe(dz) \right] \\
&=: I_{1,1,n}^{(v)}+I_{1,2,n}^{(v)}. 
 \end{align*}
 By   \eqref{e:small.p.interm}, \eqref{e:bn.ass.interm}, Lemma \ref{e:regvar.est}, again \eqref{e:small.p.interm}, 
\eqref{e:express.n} and regular variation, 
 \begin{align*}
 I_{1,1,n}^{(v)} \sim& \beta
   \frac{n\tau_n}{g_Y(q(\tau_n))^2h_n^2b_n^2}\one(1\leq t\theta)
    \int_0^{b_n} z \LFe(dz) \\
\sim& \beta\frac{\gamma_1-1}{\gamma_1} \frac{n\tau_n}{g_Y(q(\tau_n))^2h_n^2b_n}
\one(1\leq t\theta) \bbP(e(X_1)\leq b_n)\\
\to& \beta\frac{\gamma_1-1}{\gamma_1} \theta^{\gamma_1-2} \one(1\leq
     t\theta). 
 \end{align*}
 Since $\tau_n\to 0$, a similar argument gives us 
 \begin{align*}
 I_{1,2,n}^{(v)} \leq \frac{n\tau_n^2}{g_Y(q(\tau_n))^2h_n^2b_n^2}
     \int_0^{b_n} z \LFe(dz)  \to 0, 
  \end{align*}
  so that 
  \begin{equation*} \label{e:I1n.v.interm}
   I_{1,n}^{(v)} \to \beta\frac{\gamma_1-1}{\gamma_1} \theta^{\gamma_1-2} \one(1\leq
     t\theta). 
\end{equation*}

Similarly, 
 \begin{align*}
&I_{2,n}^{(v)}= \frac{n}{g_Y(q(\tau_n))^2h_n^2} \\
&\left[ \int_{b_n}^1 \frac1z\bbP(Y_1\leq q(\tau_n)|e(X_1)=z) (\tau_n-1)^2
   \one\bigl( (1-\tau_n )\leq t q(\tau_n)h_nz\bigr)
\LFe(dz) \right.\\
& \left.+ \int_{b_n}^1 \frac1z\bbP(Y_1> q(\tau_n)|e(X_1)=z) \tau_n^2
   \one\bigl( \tau_n\leq t q(\tau_n)h_nz\bigr)
\LFe(dz) \right] \\
&=: I_{2,1,n}^{(v)}+I_{2,2,n}^{(v)}. 
 \end{align*}
By Lemma \ref{e:regvar.est},   \eqref{e:express.n}, \eqref{e:bn.ass.interm}, regular variation and the fact that $\tau_n\to 0$, 
\begin{align*}
  I_{2,2,n}^{(v)} \leq& \frac{n\tau_n^2}{g_Y(q(\tau_n))^2h_n^2}
  \int_{b_n}^1 \frac1z \LFe(dz) \\
\sim& \frac{\gamma_1-1}{2-\gamma_1}
  \frac{n\tau_n^2}{g_Y(q(\tau_n))^2h_n^2b_n} \bbP(e(X_1)\leq b_n)\\
\sim& \frac{\gamma_1-1}{2-\gamma_1}
  \frac{\tau_n}{g_Y(q(\tau_n))h_nb_n} \frac{\bbP(e(X_1)\leq
      b_n)}{\bbP(e(X_1)\leq (g_Y(q(\tau_n))h_n)^{-1})} \to 0.
\end{align*}
Furthermore, let $z_0>0$ be such that \eqref{e:small.p.interm} holds
for $z<z_0$. Write 
\begin{align*}
  I_{2,1,n}^{(v)} = \frac{n}{g_Y(q(\tau_n))^2h_n^2}\left( \int_{b_n}^{z_0} +
\int_{z_0}^1 \right) =: I_{2,1,1,n}^{(v)}+ 
  I_{2,1,2,n}^{(v)}. 
\end{align*}
Notice that by \eqref{e:express.n}, 
\begin{align*}
I_{2,1,2,n}^{(v)} \leq& \frac{1}{z_0} \frac{n}{g_Y(q(\tau_n))^2h_n^2}
\int_{z_0}^1 \bbP(Y_1\leq q(\tau_n)|e(X_1)=z) \LFe(dz) \\
\leq& \frac{1}{z_0} \frac{n}{g_Y(q(\tau_n))^2h_n^2} \bbP(Y_1\leq q(\tau_n))
= \frac{1}{z_0} \frac{n\tau_n}{g_Y(q(\tau_n))^2h_n^2} \\
\sim& \frac{1}{z_0} \frac{1}{g_Y(q(\tau_n))h_n}\frac{ 1}{\bbP(e(X_1)\leq
      (g_Y(q(\tau_n))h_n)^{-1})}  \to 0
\end{align*}
since $\gamma_1<2$ and $g_Y(q(\tau_n))h_n\to\infty$. On the other
hand, if $0<t<1/\theta$, using \eqref{e:bn.ass.interm}, Lemma \ref{e:regvar.est}, 
\eqref{e:express.n}, again \eqref{e:bn.ass.interm} and regular variation, 
\begin{align*}
  I_{2,1,1,n}^{(v)} \sim&\beta \frac{n\tau_n}{g_Y(q(\tau_n))^2h_n^2} 
\int_{b_n}^{z_0} \frac1z  \one\bigl( (1-\tau_n )\leq t q(\tau_n)h_nz\bigr)
\LFe(dz) \\
\sim& \beta \frac{n\tau_n}{g_Y(q(\tau_n))^2h_n^2} 
\int_{b_n/(t\theta)}^{z_0} \frac1z \LFe(dz) \\
\sim& \beta\frac{\gamma_1-1}{2-\gamma_1}
      \frac{n\tau_n}{g_Y(q(\tau_n))^2h_n^2}
      \bigl(b_n/(t\theta)\bigr)^{-1}P\bigl(e(X_1)\leq
      b_n/(t\theta)\bigr)
      \to \beta\frac{\gamma_1-1}{2-\gamma_1}t^{2-\gamma_1}, 
\end{align*}
while for $t>1/\theta$, in a similar manner,
we obtain $
I_{2,1,1,n}^{(v)} \to
\beta\frac{\gamma_1-1}{2-\gamma_1}\theta^{-(2-\gamma_1)}$.
We conclude that 
\begin{align*}
&n\, \var\left[ \bigl(g_Y(q(\tau_n))h_n\bigr)^{-1}\frac{D_1}{\max(e(X_1),b_n)}
  \bigl( \tau_{n}-\one(Y_1\leq q(\tau_{n}))\bigr)  \one\bigl(
                               A_n(t)\bigr)\right] \\
\to& \beta\frac{\gamma_1-1}{\gamma_1} \theta^{\gamma_1-2} \one(1\leq
     t\theta)
+ \beta\frac{\gamma_1-1}{2-\gamma_1}\min(t,1/\theta)^{2-\gamma_1}
= \int_{\bbr} x^2\one\bigl( \|x|\leq t\bigr)
\nu (dx),
\end{align*}
proving \eqref{e:tr.v.interm} and completing the proof. 
\end{proof}

\newpage

\bibliographystyle{plainnat}
\bibliography{biblio}

@book{imbens:rubin2015,
  title={Causal inference in statistics, social, and biomedical sciences},
  author={Imbens, Guido W and Rubin, Donald B},
  year={2015},
  publisher={Cambridge university press}
}

@article{rubin1974,
  title={Characterizing the estimation of parameters in incomplete-data problems},
  author={Rubin, Donald B},
  journal={Journal of the American Statistical Association},
  volume={69},
  number={346},
  pages={467--474},
  year={1974},
  publisher={Taylor \& Francis}
}

@book{rosenbaum2017,
  title={Observation and experiment: An introduction to causal inference},
  author={Rosenbaum, Paul},
  year={2017},
  publisher={Harvard University Press}
}

@article{abadie:angrist:imbens2002,
  title={Instrumental variables estimates of the effect of subsidized training on the quantiles of trainee earnings},
  author={Abadie, Alberto and Angrist, Joshua and Imbens, Guido},
  journal={Econometrica},
  volume={70},
  number={1},
  pages={91--117},
  year={2002},
  publisher={Wiley Online Library}
}

@article{koenker:geling2001,
  title={Reappraising medfly longevity: a quantile regression survival analysis},
  author={Koenker, Roger and Geling, Olga},
  journal={Journal of the American Statistical Association},
  volume={96},
  number={454},
  pages={458--468},
  year={2001},
  publisher={Taylor \& Francis}
}

@article{koenker1982tests,
  title={Tests of linear hypotheses and l" 1 estimation},
  author={Koenker, Roger and Bassett, Gilbert},
  journal={Econometrica: Journal of the Econometric Society},
  pages={1577--1583},
  year={1982},
  publisher={JSTOR}
}

@article{heiler:kazak2021,
  title={Valid inference for treatment effect parameters under irregular identification and many extreme propensity scores},
  author={Heiler, Phillip and Kazak, Ekaterina},
  journal={Journal of Econometrics},
  volume={222},
  number={2},
  pages={1083--1108},
  year={2021},
  publisher={Elsevier}
}

@article{matsouaka:zhou2024,
  title={Causal inference in the absence of positivity: The role of overlap weights},
  author={Matsouaka, Roland A and Zhou, Yunji},
  journal={Biometrical Journal},
  volume={66},
  number={4},
  pages={2300156},
  year={2024},
  publisher={Wiley Online Library}
}

@article{liu:li:zhou:matsouaka2024,
  title={Average treatment effect on the treated, under lack of positivity},
  author={Liu, Yi and Li, Huiyue and Zhou, Yunji and Matsouaka, Roland A},
  journal={Statistical Methods in Medical Research},
  volume={33},
  number={10},
  pages={1689--1717},
  year={2024},
  publisher={SAGE Publications Sage UK: London, England}
}

@book{tsay2005,
  title={Analysis of financial time series},
  author={Tsay, Ruey S},
  year={2005},
  publisher={John wiley \& sons}
}

@article{abrevaya:dahl2008,
  title={The effects of birth inputs on birthweight: evidence from quantile estimation on panel data},
  author={Abrevaya, Jason and Dahl, Christian M},
  journal={Journal of Business \& Economic Statistics},
  volume={26},
  number={4},
  pages={379--397},
  year={2008},
  publisher={Taylor \& Francis}
}

@book{ding2024,
  title={A first course in causal inference},
  author={Ding, Peng},
  year={2024},
  publisher={Chapman \& Hall}
}

@article{chernozhukov:hansen2006,
  title={Instrumental quantile regression inference for structural and treatment effect models},
  author={Chernozhukov, Victor and Hansen, Christian},
  journal={Journal of Econometrics},
  volume={132},
  number={2},
  pages={491--525},
  year={2006},
  publisher={Elsevier}
}

@book{koenker2005,
  title={Quantile regression},
  author={Koenker, Roger},
  volume={38},
  year={2005},
  publisher={Cambridge University Press}
}

@article{armstrong:kolesar2021,
  title={Finite-Sample Optimal Estimation and Inference on Average Treatment Effects Under Unconfoundedness},
  author={Armstrong, Timothy B and Koles{\'a}r, Michal},
  journal={Econometrica},
  volume={89},
  number={3},
  pages={1141--1177},
  year={2021},
  publisher={Wiley Online Library}
}

@article{busso:dinardo:mccrary2014,
  title={New evidence on the finite sample properties of propensity score reweighting and matching estimators},
  author={Busso, Matias and DiNardo, John and McCrary, Justin},
  journal={Review of Economics and Statistics},
  volume={96},
  number={5},
  pages={885--897},
  year={2014},
  publisher={The MIT Press}
}

@article{damour:ding:feller:lei:jasjeet2021,
  title={Overlap in observational studies with high-dimensional covariates},
  author={D’Amour, Alexander and Ding, Peng and Feller, Avi and Lei, Lihua and Sekhon, Jasjeet},
  journal={Journal of Econometrics},
  volume={221},
  number={2},
  pages={644--654},
  year={2021},
  publisher={Elsevier}
}

@article{davis1992m,
  title={M-estimation for autoregressions with infinite variance},
  author={Davis, Richard A and Knight, Keith and Liu, Jian},
  journal={Stochastic Processes and their Applications},
  volume={40},
  number={1},
  pages={145--180},
  year={1992},
  publisher={Elsevier BV}
}

@inproceedings{knight2002limiting,
  title={What are the limiting distributions of quantile estimators?},
  author={Knight, Keith},
  booktitle={Statistical Data Analysis Based on the L 1-Norm and Related Methods},
  pages={47--65},
  year={2002},
  organization={Springer}
}

@article{goh2009nonstandard,
  title={NONSTANDARD QUANTILE-REGRESSION INFERENCE},
  author={Goh, Chuan and Knight, K},
  journal={Econometric Theory},
  volume={25},
  number={5},
  pages={1415--1432},
  year={2009},
  publisher={Cambridge University Press}
}

@book{kallenberg:2002,
  title={Foundations of modern probability, 2nd edition},
  author={Kallenberg, Olav},
  volume={2},
  year={2002},
  publisher={Springer}
}

@article{khan:nekipelov2022,
  title={On uniform inference in nonlinear models with endogeneity},
  author={Khan, Shakeeb and Nekipelov, Denis},
  journal={Journal of Econometrics},
  pages={105261},
  year={2022},
  publisher={Elsevier}
}

@article{dehejia:wahba1999,
  title={Causal effects in nonexperimental studies: Reevaluating the evaluation of training programs},
  author={Dehejia, Rajeev H and Wahba, Sadek},
  journal={Journal of the American statistical Association},
  volume={94},
  number={448},
  pages={1053--1062},
  year={1999},
  publisher={Taylor \& Francis}
}

@article{crump:hotz:imbens:mitnik2009,
  title={Dealing with limited overlap in estimation of average treatment effects},
  author={Crump, Richard K and Hotz, V Joseph and Imbens, Guido W and Mitnik, Oscar A},
  journal={Biometrika},
  volume={96},
  number={1},
  pages={187--199},
  year={2009},
  publisher={Oxford University Press}
}

@article{yang:ding2018,
  title={Asymptotic inference of causal effects with observational studies trimmed by the estimated propensity scores},
  author={Yang, Shu and Ding, Peng},
  journal={Biometrika},
  volume={105},
  number={2},
  pages={487--493},
  year={2018},
  publisher={Oxford University Press}
}

@article{khan:tamer2010,
  title={Irregular identification, support conditions, and inverse weight estimation},
  author={Khan, Shakeeb and Tamer, Elie},
  journal={Econometrica},
  volume={78},
  number={6},
  pages={2021--2042},
  year={2010},
  publisher={Wiley Online Library}
}

@article{robins:rotnitzky:zhao1994,
  title={Estimation of regression coefficients when some regressors are not always observed},
  author={Robins, James M and Rotnitzky, Andrea and Zhao, Lue Ping},
  journal={Journal of the American statistical Association},
  volume={89},
  number={427},
  pages={846--866},
  year={1994},
  publisher={Taylor \& Francis}
}

@article{rosenbaum:rubin1983,
  title={The central role of the propensity score in observational studies for causal effects},
  author={Rosenbaum, Paul R and Rubin, Donald B},
  journal={Biometrika},
  volume={70},
  number={1},
  pages={41--55},
  year={1983},
  publisher={Oxford University Press}
}

@article{deuber:lo:engelke:maathuis2023,
  title={Estimation and inference of extremal quantile treatment effects for heavy-tailed distributions},
  author={Deuber, David and Li, Jinzhou and Engelke, Sebastian and Maathuis, Marloes H},
  journal={Journal of the American Statistical Association},
  pages={1--11},
  year={2023},
  publisher={Taylor \& Francis}
}

@article{ma:wang2020,
  title={Robust inference using inverse probability weighting},
  author={Ma, Xinwei and Wang, Jingshen},
  journal={Journal of the American Statistical Association},
  volume={115},
  number={532},
  pages={1851--1860},
  year={2020},
  publisher={Taylor \& Francis}
}

@article{lalonde1986,
  title={Evaluating the econometric evaluations of training programs with experimental data},
  author={LaLonde, Robert J},
  journal={The American Economic Review},
  pages={604--620},
  year={1986},
  publisher={JSTOR}
}

@article{dehejia:wahba2002,
  title={Propensity score-matching methods for nonexperimental causal studies},
  author={Dehejia, Rajeev H and Wahba, Sadek},
  journal={Review of Economics and statistics},
  volume={84},
  number={1},
  pages={151--161},
  year={2002},
  publisher={MIT Press 238 Main St., Suite 500, Cambridge, MA 02142-1046, USA journals~…}
}

@article{firpo2007,
  title={Efficient semiparametric estimation of quantile treatment effects},
  author={Firpo, Sergio},
  journal={Econometrica},
  volume={75},
  number={1},
  pages={259--276},
  year={2007},
  publisher={Wiley Online Library}
}

@article{zhang2018,
author = {Yichong Zhang},
title = {{Extremal quantile treatment effects}},
volume = {46},
journal = {Annals of Statistics},
number = {6B},
publisher = {Institute of Mathematical Statistics},
pages = {3707 -- 3740},
keywords = {Extreme quantile, intermediate quantile},
year = {2018}
}

@article{chernozhukov2005,
author = {Victor Chernozhukov},
title = {{Extremal quantile regression}},
volume = {33},
journal = {Annals of Statistics},
number = {2},
publisher = {Institute of Mathematical Statistics},
pages = {806 -- 839},
year = {2005}
}

@article{wang:li2013,
  title={Estimation of extreme conditional quantiles through power transformation},
  author={Wang, Huixia Judy and Li, Deyuan},
  journal={Journal of the American Statistical Association},
  volume={108},
  number={503},
  pages={1062--1074},
  year={2013},
  publisher={Taylor \& Francis}
}

@article{wang:li:he2012,
  title={Estimation of high conditional quantiles for heavy-tailed distributions},
  author={Wang, Huixia Judy and Li, Deyuan and He, Xuming},
  journal={Journal of the American Statistical Association},
  volume={107},
  number={500},
  pages={1453--1464},
  year={2012},
  publisher={Taylor \& Francis}
}

@BOOK{chow:teicher:1988,
	AUTHOR = {Y.S. Chow and H. Teicher},
	TITLE = {Probability Theory: Independence, Interchangeability, Martingales},
	PUBLISHER = {Springer Verlag},
	YEAR = {1988},
	ADDRESS = {New York},
	EDITION = {2nd}
}

@article{drees:janssen:resnick:wang2020,
  title={On a minimum distance procedure for threshold selection in tail analysis},
  author={Drees, Holger and Jan{\ss}en, Anja and Resnick, Sidney I and Wang, Tiandong},
  journal={SIAM Journal on Mathematics of Data Science},
  volume={2},
  number={1},
  pages={75--102},
  year={2020},
  publisher={SIAM}
}

@article{arcones:gine1989,
  title={The bootstrap of the mean with arbitrary bootstrap sample size},
  author={Arcones, Miguel A and Gin{\'e}, Evarist},
  journal={Annales de l'Institut Henri Poincar{\'e}: Probabilit{\'e}s et Statistiques},
  volume={25},
  number={4},
  pages={457--481},
  year={1989}
}

@article{athreya:lahiri:wu1998,
  title={Inference for heavy tailed distributions},
  author={Athreya, Krishna B. and Lahiri, Soumendra N. and Wu, Wei},
  journal={Journal of Statistical Planning and Inference},
  volume={66},
  number={1},
  pages={61--75},
  year={1998},
  publisher={Elsevier}
}

@article{politis:romano1994,
  title={Large sample confidence regions based on subsamples under minimal assumptions},
  author={Politis, Dimitris N and Romano, Joseph P},
  journal={The Annals of Statistics},
  pages={2031--2050},
  year={1994},
  publisher={JSTOR}
}

@article{romano:wolf1999,
  title={Subsampling inference for the mean in the heavy-tailed case},
  author={Romano, Joseph P and Wolf, Michael},
  journal={Metrika},
  volume={50},
  pages={55--69},
  year={1999},
  publisher={Springer}
}

@article{athreya1987,
  title={Bootstrap of the Mean in the Infinite Variance Case},
  author={Athreya, Krishna B.},
  journal={The Annals of Statistics},
  pages={724--731},
  year={1987},
  publisher={JSTOR}
}

@article{knight1989,
  title={On the bootstrap of the sample mean in the infinite variance case},
  author={Knight, Keith},
  journal={The Annals of Statistics},
  pages={1168--1175},
  year={1989},
  publisher={JSTOR}
}

@article{clauset:shalizi:newman2009,
  title={Power-law distributions in empirical data},
  author={Clauset, Aaron and Shalizi, Cosma Rohilla and Newman, Mark EJ},
  journal={SIAM review},
  volume={51},
  number={4},
  pages={661--703},
  year={2009},
  publisher={SIAM}
}

\end{document}